\numberwithin{equation}{section}
\newtheorem{thm}{Theorem}[section]
\newtheorem{prop}[thm]{Proposition}
\newtheorem{cor}[thm]{Corollary}
\newtheorem*{cor*}{Corollary}
\newtheorem{lema}[thm]{Lemma}
\newtheorem*{lema*}{Lemma}
\theoremstyle{definition}
\newtheorem*{prob*}{Problem}
\newtheorem{Def}[thm]{Definition}
\newtheorem{exa}[thm]{Example}
\newtheorem{obs}[thm]{Remark}
\newtheorem*{obs*}{Remark}
\newtheorem*{thm*}{Theorem}
\newtheorem*{prop*}{Proposition}
\newcommand{\matriz}[4]{\displaystyle\
    \left(
       \begin{array}{cc}
        {#1}&{#2}\\
        {#3}&{#4}
       \end{array}
     \right)}
\newcommand{\PI}[2]{\left\langle \,#1 , #2\, \right\rangle}
\newcommand{\K}[2]{\left[ \,#1 , #2\, \right]}
\newcommand{\ra}{\rightarrow}
\newcommand{\s}{\sigma}
\newcommand{\CC}{\mathbb{C}}
\newcommand{\St}{\mathcal{S}}
\newcommand{\N}{\mathcal{N}}
\newcommand{\M}{\mathcal{M}}
\newcommand{\F}{\mathcal{F}}
\newcommand{\G}{\mathcal{G}}
\newcommand{\HH}{\mathcal{H}}
\newcommand{\KK}{\mathcal{K}}
\newcommand{\mc}[1]{\mathcal{#1}}
\newcommand{\ol}{\overline}
\newcommand{\ort}{[\bot]}
\newcommand{\sdo}{[\dotplus]}
\newcommand{\noi}{\noindent}
\newcommand{\Skindef}{\K{ \,\, }{\,}}
\newcommand{\Skdef}{\PI{\,\,}{\,}}
\DeclareMathOperator{\Span}{span}
\DeclareMathOperator{\sgn}{sgn}
\begin{document}

\title{Duality for Frames in Krein Spaces}

\author{J.~I.~Giribet, A.~Maestripieri, and F.~Mart\'{\i}nez~Per\'{\i}a
\thanks{This work was partially supported by \textit{Consejo Nacional de Investigaciones Cient\'{\i}ficas y T\'ecnicas} CONICET PIP 0168.
The research of J. Giribet and A. Maestripieri was also partially supported by \textit{Universidad de Buenos Aires}  UBACyT 01/Q637, and \textit{Agencia Nacional de Promoci\'on Cient\'ifica y Tecnol\'ogica} PICT-2014-1776.
The research of F. Mart\'{\i}nez Per\'{\i}a was also partially supported by \textit{Universidad Nacional de La Plata}  UNLP 11X681, and \textit{Agencia Nacional de Promoci\'on Cient\'ifica y Tecnol\'ogica} PICT-2015-1505.}
}
\date{}

\maketitle

\begin{abstract}
A $J$-frame for a Krein space $\HH$ is in particular a frame for $\HH$ (in the Hilbert space sense). But it is also compatible with the indefinite inner-product of $\HH$, meaning that it determines a pair of maximal uniformly definite subspaces,
an analogue to the maximal dual pair associated to an orthonormal basis in a Krein space. This work is devoted to study duality for $J$-frames in Krein spaces. Also, tight and Parseval $J$-frames are defined and characterized.
\end{abstract}

\noindent {\small\bf 2010 Mathematics Subject Classification} {\small Primary 42C15; Secondary 46C20, 47B50.}

\noindent {\small\bf Keywords} {\small Signal processing, frames, Krein spaces.}

\section{Introduction}

Filtering, prediction and smoothing are fundamental problems in signal processing. The Krein space approach has attracted increasing attention from researchers in that field, mainly because its robustness. 
Based on the Krein space estimation theory developed in \cite{HSK96}, several applications have been described in detail, including $H_\infty$ filtering, quadratic game theory, finite-memory adaptive filtering, risk-sensitive control and estimation problems. Moreover, Krein space based filtering, prediction and smoothing techniques have been well developed in the last years, see \cite{S14} and the references therein.

Frame theory is a key tool in signal and image processing, data compression and sampling theory, among other applications in engineering, applied mathematics and computer sciences. The major advantage of a frame over an orthonormal, orthogonal or Riesz basis is its redundancy: each vector admits several different reconstructions in terms of the frame coefficients. For instance, frames have shown to be useful in signal processing applications when noisy channels are involved, because a frame allows to reconstruct vectors (signals) even if some of the frame coefficients are missing (or corrupted), see \cite{BodPau,Stro,HolPau}.

A frame for a Hilbert space $(\HH,\PI{\,\,}{\,})$ is a family $\mc{F}=\{f_i\}_{i\in I}$ in $\HH$ for which there exist positive constants $0<\alpha\leq \beta$ such that
\[
  \alpha\, \|f\|^2 \leq \sum_{i\in I} |\PI{f}{f_i}|^2 \leq \beta\, \|f\|^2, 
\]
for every $f\in \HH$. Every frame $\mc{F}$ for $\HH$ has associated a (bounded) positive invertible operator $S:\HH\ra\HH$ (the so-called frame operator), which allows to reconstruct each $f\in \HH$ with the following sampling-reconstruction scheme: if $g_i:=S^{-1}f_i$ then 
\begin{equation}\label{dualidad}
  f = \sum_{i\in I}\PI{f}{f_i}\, g_i=\sum_{i\in I}\PI{f}{g_i}f_i.
\end{equation}
The family $\{S^{-1}f_i\}_{i\in I}$ is also a frame for $\HH$, and it is called the \emph{canonical dual frame} of $\mc{F}$. Given a frame $\mc{F}=\{f_i\}_{i\in I}$ for $\HH$, in general there are infinitely many frames $\mc{G}=\{g_i\}_{i\in I}$ for $\HH$ which are ``duals'' for $\F$, in the sense that $\F$ and $\G$ satisfies \eqref{dualidad}. Each of these duals frames gives an alternative sampling-reconstruction scheme. 

A frame $\mc{F}=\{f_i\}_{i\in I}$ is called a Parseval frame if it is self-dual. Thus, the associated frame operator is the identity, 
and the signals $f\in\HH$ can be decomposed as:
\begin{equation*}
  f = \sum_{i\in I}\PI{f}{f_i}\, f_i.
\end{equation*}
In this case, no computation is needed to calculate the dual frame, a desirable property for several applications, see \cite{Eldar15}. Another useful property of sampling with Parseval frames is that the energy of the samples is the same as the energy of the original signal.

Recently, there were different attempts to introduce frame theory on Krein spaces;
see \cite{EFW,GMMPM12,PW}. Along this work, we use the notion of $J$-frame proposed in \cite{GMMPM12} which was motivated by a signal processing problem, where signals with the same energy at high and low band frequencies are considered disturbances, see the discussion in \cite[Sec. 3]{GMMPM12}. With the proper Krein space structure, the disturbances are embedded into the set of neutral vectors, and signals can be classified into signals with predominantly high-frequencies  (which are positive vectors in the sense of Krein spaces) and predominantly low-frequencies (negative vectors in the sense of Krein spaces). 

A $J$-frame $\F=\{f_i\}_{i\in I}$ for a Krein space $(\HH, \K{\,\,}{\,})$ is a family of non-neutral vectors such that the subfamily $\F_+=\{f_i\}_{i\in I_+}$ of positive vectors generates a maximal uniformly positive subspace $\M_+$ of $\HH$, that is, they generate a (maximal) subspace which is itself a Hilbert space with the inner-product $\K{\,\,}{\,}$. Moreover, the correlation between the signals in $\M_+$ and the disturbances (neutral vectors) is uniformly bounded, see \cite[Sec. 3.2]{GMMPM12}. Analogously, the subfamily $\F_-=\{f_i\}_{i\in I_-}$ of negative vectors generate a maximal uniformly negative subspace $\M_-$ of $\HH$, i.e. an anti-Hilbert space with the inner-product $\K{\,\,}{\,}$. Also, the $J$-frame $\F$ allows to decompose every signal $f\in \HH$ as $f=f_+ + f_-$, where $f_\pm\in \M_\pm$, and $\F_\pm$ is a frame for the Hilbert space $(\M_\pm, \pm\K{\,\,}{\,})$.

The $J$-frame operator $S:\HH\ra\HH$ associated to a $J$-frame $\F$ is an invertible selfadjoint operator in the Krein space sense. It determines the following sampling-reconstruction scheme for $\F$: given $f\in \HH$, it can be represented as
\[
	f=\sum_{i\in I}\sigma_i\K{f}{f_i}S^{-1}f_i=\sum_{i\in I}\sigma_i\K{f}{S^{-1}f_i}f_i,
\]
where $\sigma_i=\sgn\K{f_i}{f_i}$. 

\medskip

In this work the duality for $J$-frames is studied. Given a $J$-frame $\F=\{f_i\}_{i\in I}$ for $\HH$, a family $\G=\{g_i\}_{i\in I}$ in $\HH$ is a dual family for $\F$ if $\sgn\K{g_i}{g_i}=\sgn\K{f_i}{f_i}=\sigma_i$ and 
\begin{equation}\label{dual2}
	f=\sum_{i\in I}\sigma_i\K{f}{f_i}g_i=\sum_{i\in I}\sigma_i\K{f}{g_i}f_i,
\end{equation}
for every $f\in \HH$. There are infinitely many dual families for a given $J$-frame, and most of them are frames (in the Hilbert space sense) but not $J$-frames. However, if $S$ is the $J$-frame operator associated to $\F$, the family $\F'=\{S^{-1}f_i\}_{i\in I}$ is a dual family for $\F$ which is also a $J$-frame for $\HH$. 

Resuming the example proposed above, assume that $\G=\{g_i\}_{i\in I}$ is a dual family for $\F$. It can be shown that if the subspace $\N_+$ spanned by the predominantly high-frequency signals in $\G$ does not intersect the subspace $\N_-$ spanned by the predominantly low-frequency signals in $\G$ (plus some condition related with the redundancy of $\G$) then $\G$ is also a $J$-frame for $\HH$. In this case, the sampling-reconstruction scheme reconstructs the positive vectors in $\M_-^{\ort}$ in terms of the positive vectors in $\G$, and the negative vectors in $\M_+^{\ort}$ in terms of the negative vectors in $\G$.

Also, Parseval $J$-frames are considered as those $J$-frames which are self-dual in the sense of \eqref{dual2}. In this case, the sampling-reconstruction scheme resembles the decomposition formula provided by an orthonormal basis in a Krein space, see \cite[Ch. 1, \S 10]{AI89}.
\bigskip

The paper is organized as follows. Section \ref{Prelim} contains the notation used along this work and some previously known results, both in frame theory for Hilbert spaces and also on the basic geometry of Krein spaces.

The definition and basic characterizations of $J$-frames are recalled in Section \ref{Jframes}. The first paragraphs consists of an exposition on concepts and notations related with $J$-frames, as the associated decomposition of the coefficients space $\ell_2(I)$, its synthesis and $J$-frame operators, etc. However, some of the material in this section is original, for instance, the contents of Subsection \ref{Jpositive}. 

Section \ref{Duales} is devoted to define and characterize dual families for a given $J$-frame. In particular, if $\F$ is a $J$-frame for a Krein space $\HH$ and $\G$ is a dual family for $\F$, necessary and sufficient conditions are presented in order to determine if $\G$ is also a $J$-frame for $\HH$.

Finally, tight and Parseval $J$-frames are studied in Section \ref{AjustadosParsevals}. For a $J$-frame $\F$ for a Krein space $\HH$, a canonical Parseval $J$-frame $\mc{P}$ is constructed using the square-root of the $J$-frame operator $S$, see \cite[Thm. 6.1]{GLLMMT17}. Also, Parseval $J$-frames for a Krein space $\HH$ are characterized as projections of $J$-orthonormal bases of a larger Krein space $\KK$, using $J$-selfadjoint projections. 
This is a version of Naimark's Theorem for Parseval $J$-frames in Krein spaces.

\section{Preliminaries}\label{Prelim}

If $\HH$ and $\KK$ are Hilbert spaces, $L(\HH,\KK)$ denotes the vector space of bounded linear operators from $\HH$ into $\KK$, and $L(\HH)$ denotes the Banach algebra of bounded linear operators acting on $\HH$.

Given two closed subspaces $\M$ and $\N$ of $\HH$, hereafter $\M\dotplus \N$ denotes their direct sum. In case that $\HH=\M \dotplus\N$, then $P_{\M//\N}$ denotes the (unique) projection with range $\M$ and nullspace $\N$. On the other hand, if $\N\bot \M$ then $\M\oplus \N$ denotes their orthogonal direct sum. In particular, if $\N=\M^\bot$ then $P_\M=P_{\M//\M^\bot}$ denotes the orthogonal projection onto $\M$.

\subsection{Frames for Hilbert spaces}\label{Hilbert}

\noindent The following is the standard notation and some basic results on frames for Hilbert spaces, see \cite{Eldar15,CKP,TaF, Chr, HanLarson}.

\medskip

A \emph{frame} for a Hilbert space $\HH$ is a family of vectors $\F=\{f_i\}_{i\in I}\subset \HH$ for which there exist constants $0<\alpha\leq \beta<\infty$ such that
\begin{equation}\label{ecu frames}
\alpha\ \|f\|^2 \leq \sum_{i\in I} |\langle f,f_i\rangle |^2 \leq \beta\ \|f\|^2\, , 
\end{equation}
for every $f\in \HH$. The optimal constants (maximal for $\alpha$ and minimal for $\beta$) are known, respectively, as the lower and upper frame bounds.

If a family of vectors $\F=\{f_i\}_{i\in I}$ satisfies the upper bound condition in \eqref{ecu frames}, then $\F$ is a \emph{Bessel family}. For a Bessel family $\F=\{f_i\}_{i\in I}$, the \emph{synthesis operator} $T\in L(\ell_2(I),\HH)$ is defined by
\begin{equation}\label{sintesis}
Tx=\sum_{i\in I}\PI{x}{e_i}f_i, \ \ \ x\in\ell_2(I),
\end{equation}
where $\{e_i\}_{i\in I}$ is the standard basis of $\ell_2(I)$. A Bessel family $\F$ is a frame for $\HH$ if and only if $T$ is surjective. In this case, the positive invertible operator $S=TT^*\in L(\HH)$ is called the \emph{frame operator}. It can be easily verified that
\begin{equation}\label{ecu S}
Sf=\sum_{i\in I}\PI{f}{f_i}f_i, \ \ \ f\in\HH.
\end{equation}
This implies that the frame bounds can be computed as: $\alpha=\|S^{-1}\|^{-1}$ and $\beta=\|S\|$.

A Bessel family $\G=\{g_i\}_{i\in I}$ is a \emph{dual} for $\F$ if, for every $f\in\HH$,
\begin{equation}\label{dualidad2}
  f = \sum_{i\in I}\PI{f}{f_i}\, g_i=\sum_{i\in I}\PI{f}{g_i}f_i.
\end{equation}
Observe that if $\G$ is a dual family for $\F$ then it is automatically a frame for $\HH$, because the above equation says that its synthesis operator is also surjective. 

From \eqref{ecu S}, it is also easy to obtain the \emph{canonical reconstruction formula} for the vectors in $\HH$:
\begin{equation}\label{reconst}
f=\sum_{i\in I}\PI{f}{S^{-1}f_i}f_i= \sum_{i\in I}\PI{f}{f_i}S^{-1}f_i, 
\end{equation}
for every $f\in\HH$. The family $\{S^{-1}f_i\}_{i\in I}$ is called the \emph{canonical dual frame} of $\F$. Observe that the canonical dual frame is similar to the original frame $\F$ in the following sense:

\begin{Def}\label{similarity}
Given two frames $\F=\{f_i\}_{i\in I}$ and $\G=\{g_i\}_{i\in I}$ for a Hilbert space $\HH$, they are \emph{similar} if there exists a invertible operator $V\in L(\HH)$ such that $Vf_i=g_i$ for $i\in I$. 
\end{Def}

\subsection{Krein spaces}\label{Krein}

In what follows we present the standard notation and some basic results on Krein spaces. For a complete exposition on the subject (and the proofs of the results below) see \cite{Bognar, AI89, Ando, Dritschel 1}.
\medskip

A vector space $\mathcal H$ with a hermitian sesquilinear form $\K{\,\,}{\,}$ is called a \emph{Krein space} if there exists a so-called
\emph{fundamental decomposition}
\begin{equation}\label{fundamental}
\mathcal H= \mathcal H_+  \sdo\, \mathcal H_-,
\end{equation}
which is the direct (and orthogonal with respect to $\Skindef$) sum of two
Hilbert spaces $(\mathcal H_+, \Skindef)$ and $(\mathcal H_-, -\Skindef)$.
These two Hilbert spaces induce in a natural way a Hilbert space inner product $\Skdef$
and, hence, a Hilbert space topology  on $\mathcal H$.
Observe that the indefinite inner-product $\Skindef$ and the
Hilbert space inner product $\Skdef$ of $\mathcal H$ are related by means of a
\emph{fundamental symmetry}, i.e.\ a unitary self-adjoint operator $J\in L(\mathcal H)$
which satisfies
\[
  \langle x, y\rangle = [Jx, y], \ \  \text{for $x,y\in \mathcal H$}.
\]
Although the fundamental decomposition is not unique, the norms induced by different fundamental decompositions turn out to be equivalent. Therefore, the (Hilbert space) topology in $\HH$ does not depend on the chosen fundamental decomposition.

 \medskip
A vector $x\in \HH$ is \emph{$J$-positive} if $\K{x}{x}>0$. A subspace $\St$ of $\HH$ is \emph{$J$-positive} if every $x\in\St$, $x\neq 0$, is a $J$-positive vector. $J$-nonnegative, $J$-neutral, $J$-negative and $J$-nonpositive vectors (and subspaces) are defined analogously. 
A subspace $\St$ of $\HH$ is said to be \emph{ uniformly $J$-positive} if there exists $\alpha> 0$ such that
\[
\K{x}{x} \geq \alpha \|x\|^2, \  \ \text{for every $x\in\St$},
\]
where $\|\, \|$ stands for the norm of the associated Hilbert space $(\HH,\PI{\,}{\,})$. Uniformly $J$-negative subspaces are defined analogously.

Given a subspace $\St$ of a Krein space $\HH$, the \emph{$J$-orthogonal companion} to $\St$ is defined by 
\[
\St^{\ort}=\{ x\in\HH : \K{x}{s}=0 \; \text{for every $s\in\St$}\}.
\]
A (closed) subspace $\St$ of $\HH$ is \textit{regular} if 
\[
\HH=\St \dotplus \St^{\ort}. 
\]
Equivalently, $\St$ is regular if and only if there exists a (unique) $J$-selfadjoint projection $E$ onto $\St$, see e.g. \cite[Ch.1, Thm. 7.16]{AI89}. In particular, every closed uniformly $J$-definite subspace $\St$ of $\HH$ is a regular subspace of $\HH$.

If $\mathcal H$ and $\mathcal K$ are Krein spaces and
 $T\in L(\mathcal H, \mathcal K)$, the \emph{adjoint operator of $T$ in the sense of Krein spaces} (shortly, the $J$-adjoint of $T$) is the unique operator $T^+\in L(\KK,\HH)$ satisfying
 \[
 [Tx,y]=[x,T^+y], \ \ \text{for every $x\in\HH$, $y\in\KK$}.
 \]
 
 An operator $T\in L(\HH)$ is \emph{selfadjoint in the sense of Krein spaces} (shortly, $J$-selfadjoint) if $T = T^+$, and it is \emph{positive in the sense of Krein spaces} (shortly, $J$-positive) if 
 \[
 \K{Tx}{x}\geq 0, \ \ \text{for every $x\in\HH$}.
 \]
 If $(\HH, \K{\,}{\,})$ is a Krein space, the cone of $J$-positive operators acting on $\HH$ induces a natural order in the real vector space of $J$-selfadjoint operators: if $A$ and $B$ are $J$-selfadjoint operators, $A\leq_J B$ if $B-A$ is a $J$-positive operator. In particular, $0\leq_J B$ means that $B$ is $J$-positive.

\section{$J$-frames: Definition and basic facts}\label{Jframes}

Recently, there were various attempts to introduce frame theory on Krein spaces;
see \cite{EFW,GMMPM12,PW}.
In the following we recall the notion of $J$-frame introduced in \cite{GMMPM12}. Some particular classes of $J$-frames where also considered in \cite{HKP16}.

\medskip

Let $(\HH,\Skindef)$ be a Krein space. Given a Bessel family $\mc{F}=\{f_i\}_{i\in I}$ in $\HH$, set $I_+=\{i\in I: \ \K{f_i}{f_i}\geq 0\}$ and $I_-=\{i\in I: \ \K{f_i}{f_i}< 0\}$. Then, consider the orthogonal decomposition of $\ell_2(I)$ induced by the partition of $I$:
\begin{equation}\label{desc fund}
\ell_2(I)=\ell_2(I_+)\oplus \ell_2(I_-),
\end{equation}
and denote $P_\pm$ the orthogonal projection onto $\ell_2(I_\pm)$, respectively. Also,  if $T:\ell_2(I)\ra\HH$ is the synthesis operator of $\F$, set $T_\pm=TP_\pm$, i.e. 
\begin{equation}\label{tes}
  T_\pm x = \sum_{i\in I_\pm}\PI{x}{e_i}f_i, \qquad x\in\ell_2(I),
\end{equation}
where $\{e_i\}_{i\in I}$ stands for the standard basis of $\ell_2(I)$. The ranges of $T_+$ and $T_-$ are the main ingredient in the following definition of frames for Krein spaces. So, if
\begin{equation}\label{emes}
\M_\pm:= \ol{\Span\{f_i:\ i\in I_\pm\}},
\end{equation}
it is important to recall that $\Span\{f_i:\ i\in I_\pm\}\subseteq R(T_\pm)\subseteq \M_\pm$ and
$R(T)=R(T_+) + R(T_-)$.

\begin{Def}\label{Jframe}
Let $\F=\{f_i\}_{i\in I}$ be a Bessel family in a Krein space $\HH$ and $T_\pm$ be as in \eqref{tes}. Then,
$\mc{F}$ is a \emph{$J$-frame} for $\HH$ if $R(T_+)$ is a maximal uniformly $J$-positive subspace and $R(T_-)$ is a maximal uniformly $J$-negative subspace of $\HH$.
\end{Def}

If $\F$ is a $J$-frame for $\HH$ then $R(T_\pm)=\M_\pm$ and,
\begin{equation}\label{suma}
R(T)=R(T_+) \dotplus R(T_-)=\M_+ \dotplus \M_-=\HH,
\end{equation}
where the last equality follows from \cite[Ch. 1, Corollary 5.2]{AI89}. Thus, $\F$ is also a frame for the Hilbert space $(\HH, \Skdef)$. Also, if $\F_\pm:= \{f_i\}_{i\in I_\pm}$ it is easy to see that
$\F_\pm$ is a frame for the Hilbert space $(\M_\pm, \pm\Skindef)$.
The following is a characterization of a $J$-frame in terms of frame inequalities, see \cite[Theorem 3.9]{GMMPM12}:

\begin{thm}\label{thm J frame bounds}
Let $\F=\{f_i\}_{i\in I}$ be a frame for $\HH$. Then, $\F$ is a $J$-frame if and only if $\M_\pm$  (defined as in \eqref{emes}) are non-degenerated subspaces of $\HH$ and there exist constants $0<\alpha_\pm \leq \beta_\pm$ such that
\begin{equation}\label{eq J frame bounds}
	\alpha_\pm (\pm\K{f}{f}) \leq \sum_{i\in I_\pm} |\K{f}{f_i}|^2 \leq \beta_\pm (\pm\K{f}{f}),
\end{equation}
for every $f\in \M_\pm$.
\end{thm}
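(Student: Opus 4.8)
The plan is to prove both implications by recognizing \eqref{eq J frame bounds} on $\M_\pm$ as the assertion that $\F_\pm=\{f_i\}_{i\in I_\pm}$ is a frame for the \emph{Hilbert} space $(\M_\pm,\pm\K{\,\,}{\,})$, and then linking this to Definition~\ref{Jframe} through the equalities $R(T_\pm)=\M_\pm$. The technical device throughout is the compression $G_+:=P_{\M_+}J|_{\M_+}\in L(\M_+)$: since $\K{f}{g}=\PI{Jf}{g}$ and $g\in\M_+$, it is selfadjoint on the Hilbert space $(\M_+,\PI{\,}{\,})$ and satisfies $\K{f}{g}=\PI{G_+f}{g}$ for all $f,g\in\M_+$. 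Hence $\M_+$ is $J$-positive if and only if $G_+\geq 0$ and $\ker G_+=\{0\}$, and $\M_+$ is uniformly $J$-positive if and only if $G_+$ is boundedly invertible; moreover, since $f_i\in\M_+$ for $i\in I_+$, one has $\K{f}{f_i}=\PI{G_+f}{f_i}$, so \eqref{eq J frame bounds} on $\M_+$ is precisely the pair of inequalities $\alpha_+\PI{G_+f}{f}\leq\sum_{i\in I_+}|\PI{G_+f}{f_i}|^2\leq\beta_+\PI{G_+f}{f}$ for $f\in\M_+$. The same observations hold on $\M_-$ with $G_-:=-P_{\M_-}J|_{\M_-}$.

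Assume first that $\F$ is a $J$-frame. Then $R(T_+)$ is closed and, since $\Span\{f_i:i\in I_+\}\subseteq R(T_+)\subseteq\M_+$, we get $R(T_+)=\M_+$; thus $T_+$ maps $\ell_2(I_+)$ onto $\M_+$, so $\F_+$ is a frame for $(\M_+,\PI{\,}{\,})$, and $\M_+$ is non-degenerated because a uniformly $J$-definite subspace is regular. Uniform $J$-positivity gives $\gamma_+\|f\|^2\leq\K{f}{f}\leq\|f\|^2$ on $\M_+$, so $G_+$ is bounded and boundedly invertible; substituting $G_+f$ for $f$ in the Hilbert-space frame inequality of $\F_+$ and using this norm equivalence produces constants $0<\alpha_+\leq\beta_+$ satisfying \eqref{eq J frame bounds}. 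The argument for $\M_-$ is symmetric, replacing $J$-positive by $J$-negative.

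For the converse, assume $\M_\pm$ are non-degenerated and \eqref{eq J frame bounds} holds. The upper bound in \eqref{eq J frame bounds} forces $\K{f}{f}\geq 0$ on $\M_+$, i.e.\ $G_+\geq 0$; and if $\K{f}{f}=0$ the upper bound makes every $\K{f}{f_i}$, $i\in I_+$, vanish, so $f\in\M_+\cap\M_+^{\ort}=\{0\}$, which shows $\ker G_+=\{0\}$. The crucial step is to upgrade this to boundedly invertible: combining the lower bound in \eqref{eq J frame bounds} with the Bessel bound $B$ of $\F$ gives $\alpha_+\|G_+^{1/2}f\|^2\leq B\,\|G_+^{1/2}(G_+^{1/2}f)\|^2$ for every $f\in\M_+$, and since $G_+^{1/2}$ is injective and selfadjoint it has dense range, so this inequality extends by continuity to all of $\M_+$; hence $G_+^{1/2}$, and therefore $G_+$, is boundedly invertible, i.e.\ $\M_+$ is uniformly $J$-positive. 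Then $\PI{g}{f_i}=\K{G_+^{-1}g}{f_i}$ for $g\in\M_+$, $i\in I_+$, and feeding $f=G_+^{-1}g$ into \eqref{eq J frame bounds} shows $\F_+$ is a frame for $(\M_+,\PI{\,}{\,})$, so $R(T_+)=\M_+$ is closed and uniformly $J$-positive; symmetrically $R(T_-)=\M_-$ is closed and uniformly $J$-negative. Finally, since $\F$ is a frame for $\HH$ we have $\HH=R(T)=R(T_+)+R(T_-)=\M_++\M_-$, and $\M_+\cap\M_-=\{0\}$ because these subspaces are uniformly $J$-definite of opposite signatures, so $\HH=\M_+\dotplus\M_-$. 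By the standard geometry of definite subspaces of a Krein space (see \cite[Ch.~1]{AI89}), a decomposition of $\HH$ as the direct sum of a uniformly $J$-positive and a uniformly $J$-negative subspace forces both summands to be maximal in their respective classes; hence $R(T_+)$ is maximal uniformly $J$-positive and $R(T_-)$ is maximal uniformly $J$-negative, which is precisely Definition~\ref{Jframe}.

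I expect the main obstacle to be the converse implication, more precisely the passage from ``$\K{\,\,}{\,}$ is positive definite on $\M_+$ and $\F_+$ obeys the frame bounds'' to ``$\K{\,\,}{\,}$ is \emph{uniformly} positive on $\M_+$'': without uniform positivity $(\M_+,\K{\,\,}{\,})$ need not be complete and the frame machinery does not apply. The square-root/bounded-below argument for $G_+$ above, which essentially uses the ambient Bessel bound of $\F$, is what resolves it. By comparison, the maximality of $R(T_\pm)$ is a soft consequence of well-known facts about direct-sum decompositions of Krein spaces into definite subspaces.
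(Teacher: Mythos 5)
Your proof is correct. Note that the paper itself gives no argument for this statement --- it is quoted from \cite[Theorem 3.9]{GMMPM12} --- so there is no internal proof to compare against; judged on its own, your argument is complete and self-contained. The reduction to the compressed Gram operators $G_\pm=\pm P_{\M_\pm}J|_{\M_\pm}$ is the right device, and the one genuinely delicate point (upgrading ``$\K{\cdot}{\cdot}$ positive definite on $\M_+$'' to ``uniformly positive'') is handled correctly: the lower bound in \eqref{eq J frame bounds} together with the ambient Bessel bound of $\F$ gives $\alpha_+\|g\|^2\le B\,\|G_+^{1/2}g\|^2$ on the dense range of $G_+^{1/2}$, hence $G_+^{1/2}$ is bounded below and $G_+$ is boundedly invertible. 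The closing maximality step is also sound and can be made even more elementary than a citation: if $\mc{M}\supsetneq R(T_+)$ were $J$-nonnegative, any $x\in\mc{M}\setminus R(T_+)$ would have a nonzero component $b\in R(T_-)$ with $b=x-a\in\mc{M}$, forcing $\K{b}{b}\ge 0$ and $\K{b}{b}<0$ simultaneously.
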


If $\F=\{f_i\}_{i\in I}$ is a $J$-frame for $\HH$, consider the projection 
\begin{equation}\label{Q}
Q:=P_{\M_+//\M_-}. 
\end{equation}
It is immediate that the synthesis operator $T$ intertwines the projections $Q$ and $P_+$, i.e.
\begin{equation}\label{intertwin}
QT=TP_+.
\end{equation}
Equivalently, $(I-Q)T=TP_-$.


From now on, given a $J$-frame $\F=\{f_i\}_{i\in I}$ for $\HH$, endow the coefficient space $\ell_2(I)$ with the following indefinite inner-product:
\begin{equation}\label{l2}
\K{x}{y}_2:=\sum_{i\in I_+} x_i \ol{y_i}- \sum_{i\in I_-} x_i \ol{y_i}, 
\end{equation}
where $x=(x_i)_{i\in I}, y=(y_i)_{i\in I} \in \ell_2(I)$.
Then, $(\ell_2(I),\Skindef_2)$ is a Krein space and \eqref{desc fund} is a fundamental decomposition of $\ell_2(I)$.

Observe that $J_2:=P_+ - P_-$ is the fundamental symmetry that relates the indefinite inner-product defined in \eqref{l2} and the standard inner-product of $\ell_2(I)$.

\medskip

The nullspace $N(T)$ of the synthesis operator $T$ of a frame $\F$ for $\HH$ provides relevant information about the frame itself. Indeed, the cardinal number $\dim N(T)$ is called the \emph{excess} of the frame. It measures the amount of vectors that can be removed from the original frame $\F$ without loosing the condition of being a frame for $\HH$, see \cite{ACRS,BCHL,H94}. In the case of a $J$-frame $\F$ for a Krein space $\HH$, the nullspace of the synthesis operator is a regular subspace of $\ell_2(I)$ which admits a natural decomposition in terms of \eqref{desc fund}.

\begin{lema}\label{Jframe descompone el nucleo}
If $\F=\{f_i\}_{i\in I}$ is a $J$-frame for $\HH$ with synthesis operator $T:\ell_2(I)\ra \HH$ then 
\[
P_+P_{N(T)}=P_{N(T)}P_+ \ \text{and} \ P_+P_{N(T)^{\ort}}=P_{N(T)^{\ort}}P_+, 
\]
where $P_{N(T)}$ and $P_{N(T)^{\ort}}$ stand for the orthogonal projections onto $N(T)$ and $N(T)^{\ort}$, respectively.

In particular, $N(T)$ is a regular subspace of $\ell_2(I)$ and the following decompositions hold:
\[
N(T)=N(T)\cap\ell_2(I_+)\ \sdo\ N(T)\cap \ell_2(I_-);
\]
\[
N(T)^{\ort}=N(T)^{\ort}\cap\ell_2(I_+)\ \sdo\ N(T)^{\ort}\cap \ell_2(I_-).
\]
\end{lema}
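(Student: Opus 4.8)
The plan is to derive the whole statement from a single observation: the orthogonal projection $P_+$ leaves $N(T)$ invariant. First I would take $x\in N(T)$ and write $x=P_+x+P_-x$, so that $0=Tx=TP_+x+TP_-x=T_+x+T_-x$, with $T_+x\in R(T_+)$ and $T_-x\in R(T_-)$. Since $\F$ is a $J$-frame, $R(T_+)=\M_+$ and $R(T_-)=\M_-$, and by \eqref{suma} the sum $\HH=\M_+\dotplus\M_-$ is \emph{direct}; hence $T_+x=-T_-x\in\M_+\cap\M_-=\{0\}$, so $TP_+x=T_+x=0$, i.e.\ $P_+x\in N(T)$. Thus $P_+N(T)\subseteq N(T)$, and consequently $P_-N(T)=(I-P_+)N(T)\subseteq N(T)$ as well.

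Granting this, the commutation relations follow at once. Since $P_+$ is self-adjoint on the Hilbert space $\ell_2(I)$, the invariance $P_+N(T)\subseteq N(T)$ forces $P_+N(T)^\perp\subseteq N(T)^\perp$; hence $P_+$ reduces $N(T)$ and therefore commutes with $P_{N(T)}$ and with $P_{N(T)^\perp}=I-P_{N(T)}$. To match the statement I would then identify the $J$-orthogonal companion with the orthogonal complement: the fundamental symmetry $J_2=P_+-P_-$ commutes with $P_{N(T)}$, and since $N(T)^{\ort}=\{y\in\ell_2(I):\PI{J_2y}{s}=0\text{ for all }s\in N(T)\}=J_2(N(T)^\perp)$ while $J_2$ is an involution leaving $N(T)^\perp$ invariant, we get $N(T)^{\ort}=N(T)^\perp$. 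In particular $\ell_2(I)=N(T)\oplus N(T)^\perp=N(T)\dotplus N(T)^{\ort}$, so $N(T)$ is a regular subspace.

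For the two decompositions I would use that $P_+$ and $P_{N(T)}$ are commuting orthogonal projections, so that $P_+P_{N(T)}$ is the orthogonal projection onto $\ell_2(I_+)\cap N(T)$ and $P_-P_{N(T)}$ is the orthogonal projection onto $\ell_2(I_-)\cap N(T)$; adding $P_{N(T)}=P_+P_{N(T)}+P_-P_{N(T)}$ gives $N(T)=\big(N(T)\cap\ell_2(I_+)\big)\oplus\big(N(T)\cap\ell_2(I_-)\big)$. Because $\ell_2(I_+)$ and $\ell_2(I_-)$ are $\K{\,}{\,}_2$-orthogonal, this orthogonal sum is also $\K{\,}{\,}_2$-orthogonal, which justifies writing it with $\sdo$. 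The identical argument applied to $P_{N(T)^\perp}$ (which also commutes with $P_+$), together with $N(T)^\perp=N(T)^{\ort}$, yields the decomposition of $N(T)^{\ort}$.

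None of these steps is computationally heavy; the only genuine content is the invariance $P_+N(T)\subseteq N(T)$, and there the point to get right is that it is precisely the \emph{directness} of the sum $\M_+\dotplus\M_-$---guaranteed by the $J$-frame hypothesis through Definition \ref{Jframe} and \eqref{suma}---that allows the single equation $Tx=0$ to be split into the pair $T_+x=0$ and $T_-x=0$.
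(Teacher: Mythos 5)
Your proposal is correct and follows essentially the same route as the paper: the key step in both is that $R(T_+)\cap R(T_-)=\{0\}$ forces $P_+$ to leave $N(T)$ invariant, after which self-adjointness of $P_+$ yields the commutation relations and the decompositions follow from the commuting projections. Your explicit identification $N(T)^{\ort}=N(T)^\perp$ is just a reformulation of the paper's use of $P_{N(T)^{\ort}}=J_2(I-P_{N(T)})J_2$ together with the fact that $J_2$ commutes with $P_{N(T)}$.
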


\begin{proof}
Observe that $Tx=0$ if and only if $T_+x=T_-x=0$ because $R(T_+)\cap R(T_-)=\{0\}$. Thus, $x\in N(T)$ if and only if $P_+x\in N(T)\cap\ell_2(I_+)$ (and $P_-x\in N(T)\cap\ell_2(I_-)$). Hence, $(I-P_{N(T)})P_+P_{N(T)}=0$, and
\begin{eqnarray*}
P_{N(T)}P_+ =(P_+P_{N(T)})^*=(P_{N(T)}P_+P_{N(T)})^* =P_{N(T)}P_+P_{N(T)}=P_+P_{N(T)}.
\end{eqnarray*}
Then, $P_{N(T)}P_+=P_{\ell_2(I_+)\cap N(T)}$ and $P_{N(T)}P_-=P_{\ell_2(I_-)\cap N(T)}$. So, the decomposition
\[
N(T)=N(T)\cap\ell_2(I_+)\ \sdo\ N(T)\cap \ell_2(I_-)
\]
follows. In particular, it shows that $N(T)$ is a regular subspace of $\ell_2(I)$. 

Furthermore, since $P_{N(T)^{\ort}}=J_2(I-P_{N(T)})J_2$ and $P_+$ commutes with $J_2$ and with $P_{N(T)}$, it follows that $P_+P_{N(T)^{\ort}}=P_{N(T)^{\ort}}P_+$ and
\[
N(T)^{\ort}=N(T)^{\ort}\cap\ell_2(I_+)\ \sdo\ N(T)^{\ort}\cap \ell_2(I_-).
\]
So, the proof is completed.
\end{proof}

Let $\F=\{f_i\}_{i\in I}$ be a $J$-frame for $\HH$. If $T:\ell_2(I)\ra\HH$ is the synthesis operator of $\F$, its $J$-adjoint is given by
\begin{equation}
T^+f=\sum_{i\in I} \sigma_i\K{f}{f_i}e_i, \ \ f\in\HH,
\end{equation}
where $\s_i=\sgn\K{f_i}{f_i}$ for $i\in I$.

\begin{Def}
Given a $J$-frame $\F=\{f_i\}_{i\in I}$ for $\HH$, its \emph{$J$-frame operator} $S:\HH\ra\HH$ is defined by
\begin{equation}
Sf:= TT^+f=\sum_{i\in I}\sigma_i \K{f}{f_i}f_i, \ \ f\in\HH.
\end{equation}
\end{Def}

It is easy to see that $S$ is an invertible $J$-selfadjoint operator in the Krein space $\mathcal H$. It provides an indefinite reconstruction formula for the vectors in $\HH$ in terms of $\F$: if $\s_i=\sgn\K{f_i}{f_i}$ then,
\begin{equation}\label{irf}
f=\sum_{i\in I}\s_i \K{f}{S^{-1}f_i} f_i= \sum_{i\in I}\s_i \K{f}{f_i} S^{-1}f_i, 
\end{equation}
for every $f\in\HH$.
In \cite[Proposition 5.4]{GMMPM12} it was shown that the family $\F'=\{S^{-1}f_i\}_{i\in I}$ is also a $J$-frame for $\HH$. 
Moreover, 
\begin{prop}\label{canonical dual}
If $\F=\{f_i\}_{i\in I}$ is a $J$-frame for $\HH$ with $J$-frame operator $S$, then $\F'=\{S^{-1}f_i\}_{i\in I}$ is a $J$-frame for $\HH$ too. Also, for every $i\in I$
\[
\sgn(\K{S^{-1}f_i}{S^{-1}f_i})=\sgn(\K{f_i}{f_i}), 
\]
the $J$-frame operator of $\F'$ is $S^{-1}$ and, if $\M_\pm$ are given by \eqref{emes} then
\begin{equation}\label{emes dual}
\ol{\Span\{S^{-1}f_i:\ i\in I_\pm \}}=\M_\mp^{\ort}.
\end{equation}
\end{prop}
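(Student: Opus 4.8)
The plan is to reduce the whole statement to one geometric identity,
\[
  S^{-1}\M_+=\M_-^{\ort}\qquad\text{and}\qquad S^{-1}\M_-=\M_+^{\ort},
\]
with $\M_\pm=\ol{\Span\{f_i:i\in I_\pm\}}$ as in \eqref{emes}. For the set-up I would record that $S$ is $J$-selfadjoint and invertible, so $(S^{-1})^{+}=(S^{+})^{-1}=S^{-1}$ is $J$-selfadjoint as well; that by Definition~\ref{Jframe} the subspace $\M_+$ is maximal uniformly $J$-positive and $\M_-$ is maximal uniformly $J$-negative, with $\HH=\M_+\dotplus\M_-$; and hence that $\M_+^{\ort}$ is maximal uniformly $J$-negative, $\M_-^{\ort}$ is maximal uniformly $J$-positive, and $\HH=\M_+^{\ort}\dotplus\M_-^{\ort}$ by \cite[Ch.~1, Corollary 5.2]{AI89}, exactly as for \eqref{suma}.

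To prove the identity, the easy half is immediate from $Sf=\sum_i\s_i\K{f}{f_i}f_i$: if $f\in\M_-^{\ort}$ then $\K{f}{f_i}=0$ for all $i\in I_-$, so $Sf=\sum_{i\in I_+}\K{f}{f_i}f_i\in\M_+$; thus $S\M_-^{\ort}\subseteq\M_+$, and symmetrically $S\M_+^{\ort}\subseteq\M_-$. The hard half is the reverse inclusions. For this I would apply the invertible operator $S$ to $\HH=\M_+^{\ort}\dotplus\M_-^{\ort}$ to get $\HH=S\M_+^{\ort}+S\M_-^{\ort}$; this sum is direct because $S\M_+^{\ort}\subseteq\M_-$, $S\M_-^{\ort}\subseteq\M_+$ and $\M_+\cap\M_-=\{0\}$. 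Now, given $m\in\M_+$, write $m=a+b$ with $a\in S\M_+^{\ort}\subseteq\M_-$ and $b\in S\M_-^{\ort}\subseteq\M_+$; then $a=m-b\in\M_+\cap\M_-=\{0\}$, so $m=b\in S\M_-^{\ort}$. Hence $\M_+\subseteq S\M_-^{\ort}$, which together with the easy half gives $S\M_-^{\ort}=\M_+$, i.e.\ $S^{-1}\M_+=\M_-^{\ort}$; the other identity is symmetric.

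From here everything follows quickly. Since $\M_-^{\ort}$ is uniformly $J$-positive and $S^{-1}f_i\in S^{-1}\M_+=\M_-^{\ort}$ is nonzero for $i\in I_+$, we get $\K{S^{-1}f_i}{S^{-1}f_i}>0$; symmetrically $\K{S^{-1}f_i}{S^{-1}f_i}<0$ for $i\in I_-$, which is the sign statement and shows $\F'$ has the same index partition $I_\pm$ as $\F$. The synthesis operator of $\F'$ is $T'=S^{-1}T$, bounded and surjective (as $R(T)=\HH$), so $\F'$ is a frame for the Hilbert space $\HH$; moreover, using $QT=TP_+$ with $Q=P_{\M_+//\M_-}$, one gets $R(T'P_+)=S^{-1}(Q\,R(T))=S^{-1}\M_+=\M_-^{\ort}$ and $R(T'P_-)=S^{-1}\M_-=\M_+^{\ort}$, maximal uniformly $J$-positive and $J$-negative respectively, so $\F'$ is a $J$-frame (recovering \cite[Proposition 5.4]{GMMPM12}). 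Because $\F$ and $\F'$ induce the same fundamental decomposition of $\ell_2(I)$, the $J$-adjoint factors as $(T')^{+}=(S^{-1}T)^{+}=T^{+}(S^{-1})^{+}=T^{+}S^{-1}$, so the $J$-frame operator of $\F'$ is $T'(T')^{+}=S^{-1}TT^{+}S^{-1}=S^{-1}$. Finally, since $S^{-1}$ is a homeomorphism it commutes with closures, so $\ol{\Span\{S^{-1}f_i:i\in I_\pm\}}=S^{-1}\M_\pm=\M_\mp^{\ort}$, which is \eqref{emes dual}.

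The only real obstacle is the hard half of the geometric identity: one cannot pass from $S\M_-^{\ort}\subseteq\M_+$ and injectivity of $S$ to equality without the global decomposition $\HH=\M_+^{\ort}\dotplus\M_-^{\ort}$, so the maximality of $\M_\pm$ (hence of $\M_\mp^{\ort}$) is genuinely used here, not just for the sign of the inner products. Everything else is bookkeeping with the identities $QT=TP_+$, $S^{+}=S$ and $S=TT^{+}$.
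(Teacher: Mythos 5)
Your proof is correct. A point of comparison is complicated by the fact that the paper gives no proof of this proposition at all: it quotes \cite[Proposition 5.4]{GMMPM12} for the statement that $\F'$ is a $J$-frame and asserts the rest without argument. The closest the paper comes to your key step is in Subsection \ref{Jpositive}, where the identity $S(\M_-^{\ort})=\M_+$ and $S(\M_+^{\ort})=\M_-$ (equation \eqref{mapping}) is derived operator-theoretically from $S_\pm=\pm T_\pm T_\pm^+$, $R(S_\pm)=\M_\pm$ and the intertwining relation $QS=SQ^+$ of \eqref{eses con q}. You reach the same identity by a more elementary and self-contained route: the inclusion $S\M_-^{\ort}\subseteq\M_+$ read directly off $Sf=\sum_{i}\s_i\K{f}{f_i}f_i$, upgraded to an equality by playing the two decompositions $\HH=\M_+\dotplus\M_-$ and $\HH=\M_+^{\ort}\dotplus\M_-^{\ort}$ against each other; this avoids introducing $Q^+$ and the operators $S_\pm$, whereas the paper's route yields the additional information (needed later) that $S$ agrees with $S_+$ on $\M_-^{\ort}$. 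The remaining bookkeeping is handled correctly: the signs follow from uniform definiteness of $\M_\mp^{\ort}$; since the sign statement shows $\F'$ induces the same partition $I_\pm$, hence the same Krein structure on $\ell_2(I)$, the factorization $(S^{-1}T)^+=T^+S^{-1}$ is legitimate and gives the $J$-frame operator $S^{-1}$; and \eqref{emes dual} follows because the homeomorphism $S^{-1}$ commutes with spans and closures. The only pedantic caveat is that your phrase ``$S^{-1}f_i$ is nonzero for $i\in I_+$'' tacitly uses that the $f_i$ are non-neutral (so nonzero), which is part of the $J$-frame setting; if some $f_i$ were zero the sign identity would hold trivially anyway.
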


The $J$-frame operator $S$ also interacts in a particular manner with the projection $Q= P_{\M_+//\M_-}$: 
\begin{equation}\label{eses con q}
QS=SQ^+.
\end{equation}
Due to the invertibility of $S$, it also holds that 
\[
S^{-1}Q=Q^+S^{-1}. 
\]
Given a $J$-frame $\F$ for $\HH$ with synthesis operator $T$, \eqref{eses con q} allows to represent the $J$-selfadjoint projection onto $N(T)^{\ort}$ in terms of $T$. 
\begin{lema}\label{desc l2+}
If $\F=\{f_i\}_{i\in I}$ is a $J$-frame for $\HH$ with synthesis operator $T:\ell_2(I)\ra \HH$ then $E=T^+(TT^+)^{-1}T$ is the $J$-selfadjoint projection onto $N(T)^{\ort}$. Moreover, $EP_\pm=P_\pm E$. 
\end{lema}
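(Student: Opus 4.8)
The plan is to treat $E=T^{+}(TT^{+})^{-1}T=T^{+}S^{-1}T$ directly: first check that it is an idempotent and $J$-selfadjoint operator, then identify $N(E)$ and $R(E)$, and finally deduce the commutation with $P_\pm$ from the intertwining relations already recorded. For idempotency, using $S=TT^{+}$ one computes $E^{2}=T^{+}S^{-1}(TT^{+})S^{-1}T=T^{+}S^{-1}SS^{-1}T=E$. For $J$-selfadjointness, note that $S=TT^{+}$ is $J$-selfadjoint and invertible, hence $S^{-1}$ is $J$-selfadjoint, so $E^{+}=T^{+}(S^{-1})^{+}(T^{+})^{+}=T^{+}S^{-1}T=E$.

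Next I would compute the nullspace: if $Ex=0$, applying $T$ gives $TT^{+}S^{-1}Tx=SS^{-1}Tx=Tx=0$, and conversely $Tx=0$ obviously forces $Ex=0$; hence $N(E)=N(T)$. Since $E$ is a $J$-selfadjoint projection, $\K{Ew}{x}=\K{w}{Ex}=0$ whenever $x\in N(E)$, so $R(E)\subseteq N(E)^{\ort}=N(T)^{\ort}$. For the reverse inclusion I would invoke that $N(T)$ is a regular subspace of $\ell_2(I)$ (Lemma \ref{Jframe descompone el nucleo}): given $x\in N(T)^{\ort}$, write $x=Ex+(I-E)x$; here $Ex\in R(E)\subseteq N(T)^{\ort}$ while $(I-E)x\in N(E)=N(T)$, so $(I-E)x\in N(T)\cap N(T)^{\ort}=\{0\}$ and thus $x=Ex\in R(E)$. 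Therefore $R(E)=N(T)^{\ort}$; since this subspace is regular, the $J$-selfadjoint projection onto it is unique, and so $E$ is that projection.

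For the identity $EP_\pm=P_\pm E$, the idea is to use $TP_{+}=QT$ from \eqref{intertwin}, the consequence $S^{-1}Q=Q^{+}S^{-1}$ of \eqref{eses con q}, and the fact that $P_{+}$ is $J_2$-selfadjoint (because $J_2P_{+}J_2=(P_{+}-P_{-})P_{+}(P_{+}-P_{-})=P_{+}$, so $P_{+}^{+}=J_2P_{+}^{*}J_2=P_{+}$). Chaining these identities,
\[
EP_{+}=T^{+}S^{-1}TP_{+}=T^{+}S^{-1}QT=T^{+}Q^{+}S^{-1}T=(QT)^{+}S^{-1}T=(TP_{+})^{+}S^{-1}T=P_{+}^{+}T^{+}S^{-1}T=P_{+}E,
\]
and $EP_{-}=P_{-}E$ follows at once from $P_{-}=I-P_{+}$.

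I expect the only genuine subtlety to be the equality $R(E)=N(T)^{\ort}$: $J$-selfadjointness of $E$ by itself yields only the inclusion $R(E)\subseteq N(E)^{\ort}$, and closing the gap really does require the regularity of $N(T)$ provided by Lemma \ref{Jframe descompone el nucleo} (equivalently, that $N(T)$ has trivial isotropic part). The remaining steps are routine manipulations with the $J$-adjoint and the intertwining identities.
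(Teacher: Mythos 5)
Your proof is correct and follows essentially the same route as the paper: verify that $E=T^{+}S^{-1}T$ is a $J$-selfadjoint idempotent, identify its range as $N(T)^{\ort}$, and derive $EP_{+}=P_{+}E$ from $QT=TP_{+}$ and $S^{-1}Q=Q^{+}S^{-1}$. The only (harmless) divergence is in identifying the range: the paper gets $R(E)=R(T^{+})=N(T)^{\ort}$ directly from the surjectivity of $T$ via $ET^{+}=T^{+}$, whereas you compute $N(E)=N(T)$ and invoke the regularity of $N(T)$ from Lemma~\ref{Jframe descompone el nucleo}; both are valid.
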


\begin{proof}
Assume that $\F$ is a $J$-frame for $\HH$ with $J$-frame operator $S=TT^+$. The identity $(S^{-1}T)T^+=S^{-1}S=I$ implies that $E:=T^+ S^{-1}T$ is a projection. Also, it is immediate that $E$ is $J$-selfadjoint and $R(E)=R(T^+)=N(T)^{\ort}$ because $T$ is surjective. Hence, $E$ is the $J$-selfadjoint projection onto $N(T)^{\ort}$.
Moreover, by \eqref{intertwin},  $QT=TP_+$ and
\begin{eqnarray*}
EP_+ = T^+ S^{-1}TP_+=T^+ S^{-1}QT = T^+ Q^+S^{-1}T= P_+T^+ S^{-1}T=P_+E.
\end{eqnarray*}
Then, $EP_+$ is the $J$-selfadjoint projection onto $\ell_2(I_+)\cap N(T)^{\ort}$, see \cite[Prop. 4]{Hassi} (or \cite[Lemma 2.2]{Ando09}).
\end{proof}

\subsection{$J$-positive operators associated to $J$-frame operators}\label{Jpositive}

Given a $J$-frame $\F$ for $\HH$, the operator $S_\pm:\HH\ra\HH$ defined by
\begin{equation}\label{eses}
S_\pm f:= \sum_{i\in I_\pm} \K{f}{f_i}f_i, \ \ f\in\HH,
\end{equation}
is a positive operator in $\HH$ because
\begin{equation}\label{eses positivos}
\K{S_\pm f}{f}=\sum_{i\in I_\pm}|\K{f}{f_i}|^2, 
\end{equation}
for every $f\in \HH$. Thus, the $J$-frame operator $S=S_+-S_-$ is the difference of two $J$-positive operators.
By \eqref{eses con q}, it follows that $S_+=QS=SQ^+$ and $S_-=-(I-Q)S=-S(I-Q)^+$. Therefore,
$R(S_\pm)=\M_\pm$ and
\begin{equation}\label{mapping}
S(\M_-^{\ort})=\M_+ \ \ \text{and} \ \ S(\M_+^{\ort})=\M_-.
\end{equation}

The following theorem shows that $S_+$ is the smallest $J$-selfadjoint operator above $S$ (according to $\leq_J$) whose range is contained in $\M_+$, and $-S_-$ is the biggest $J$-selfadjoint operator below $S$ (according to $\leq_J$) whose range is contained in $\M_-$.
\medskip

\begin{thm}
Let $S:\HH\ra\HH$ be the $J$-frame operator associated to a $J$-frame $\F=\{f_i\}_{i\in I}$. If $S_\pm$ and $\M_\pm$ are given by \eqref{eses} and \eqref{emes}, respectively, then 
\[
S_+=\min_{\leq_J}\{X=X^+:\ S\leq_J X, \ R(X)\subseteq \M_+\};
\]
\[
-S_-=\max_{\leq_J}\{Y=Y^+:\ Y\leq_J S, \ R(Y)\subseteq \M_-\}.
\]
\end{thm}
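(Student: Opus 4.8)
The plan is to prove both statements by duality; I focus on the first one, the characterization of $S_+$, since the second follows by applying the first to $-S$ (whose $J$-frame data has the roles of $I_+$ and $I_-$ exchanged, so that its ``$S_+$'' is exactly $S_-$). First I would verify that $S_+$ is indeed a competitor in the set on the right-hand side: it is $J$-selfadjoint (being a sum of the $J$-positive, hence $J$-selfadjoint, operators $f\mapsto\K{f}{f_i}f_i$ for $i\in I_+$), its range is $R(S_+)=\M_+$ by the remarks preceding the theorem (using $S_+=QS$ and surjectivity of $S$), and $S\leq_J S_+$ because $S_+-S=S_-$ is $J$-positive by \eqref{eses positivos}. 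So the content is \emph{minimality}: if $X=X^+$ satisfies $S\leq_J X$ and $R(X)\subseteq\M_+$, then $S_+\leq_J X$, i.e. $X-S_+$ is $J$-positive.

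The key step is to exploit the range condition $R(X)\subseteq\M_+$ together with $J$-selfadjointness. Since $\M_+$ is uniformly $J$-positive it is regular, and $\M_+^{\ort}$ is the complementary regular subspace with $\HH=\M_+\dotplus\M_+^{\ort}$; moreover $\M_+^{\ort}$ is uniformly $J$-negative (this is standard Krein-space geometry, e.g. \cite[Ch. 1]{AI89}). From $R(X)\subseteq\M_+$ and $X=X^+$ one gets $\M_+^{\ort}\subseteq N(X)$: indeed for $z\in\M_+^{\ort}$ and any $w\in\HH$, $\K{Xz}{w}=\K{z}{Xw}=0$ since $Xw\in\M_+$, and non-degeneracy of $\HH$ forces $Xz=0$. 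The same argument applied to $S_+$ (using $R(S_+)=\M_+$) gives $\M_+^{\ort}\subseteq N(S_+)$, hence $\M_+^{\ort}\subseteq N(X-S_+)$. So to check $\K{(X-S_+)f}{f}\ge 0$ for all $f$, write $f=f_+ + f_0$ with $f_+\in\M_+$, $f_0\in\M_+^{\ort}$ via $Q$; then $(X-S_+)f=(X-S_+)f_+=(X-S_+)Qf$, and $\K{(X-S_+)f}{f}=\K{(X-S_+)Qf}{Qf} + \K{(X-S_+)Qf}{(I-Q)f}$. The second term: $(X-S_+)Qf\in\M_+$ and $(I-Q)f\in\M_+^{\ort}$, so it vanishes by $J$-orthogonality. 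Hence it suffices to show $\K{(X-S_+)g}{g}\ge 0$ for all $g\in\M_+$.

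For the final reduction I would use $S\leq_J X$, i.e. $\K{Sf}{f}\le\K{Xf}{f}$ for all $f$. Now for $g\in\M_+$: $\K{Xg}{g}\ge\K{Sg}{g}=\K{S_+g}{g}-\K{S_-g}{g}$. Since $R(S_-)=\M_-$ and, by the same $J$-selfadjoint-plus-range argument, $\M_-^{\ort}\subseteq N(S_-)$, and since $\M_+\subseteq\M_-^{\ort}$? — here is the subtlety: in general $\M_+$ is \emph{not} contained in $\M_-^{\ort}$, because the pair $(\M_+,\M_-)$ need not be mutually $J$-orthogonal; what we do know is $\HH=\M_+\dotplus\M_-$ and $S_-=-S(I-Q)^+$, so $S_-g = -S(I-Q)^+ g$. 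I expect the cleanest route is instead: for $g\in\M_+$, compute $\K{S_-g}{g}=\sum_{i\in I_-}|\K{g}{f_i}|^2\ge 0$ directly from \eqref{eses positivos}, and therefore $\K{S_+g}{g}=\K{Sg}{g}+\K{S_-g}{g}\le\K{Xg}{g}$. This gives $\K{(X-S_+)g}{g}\ge 0$ for all $g\in\M_+$, which by the previous paragraph yields $S_+\leq_J X$ and completes the proof.

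The main obstacle I anticipate is the bookkeeping around the non-orthogonal decomposition $\HH=\M_+\dotplus\M_-$ versus the orthogonal-in-the-$J$-sense decomposition $\HH=\M_+\dotplus\M_+^{\ort}$: one must be careful which complement of $\M_+$ is being used at each step (the projection $Q$ uses $\M_-$, but the vanishing of cross terms needs $\M_+^{\ort}$), and the identity $R(S_+)=\M_+$ together with $\M_+^{\ort}\subseteq N(S_+)$ is what reconciles them. Everything else is a routine application of $J$-selfadjointness, non-degeneracy of $\HH$, and the positivity formula \eqref{eses positivos}.
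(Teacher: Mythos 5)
There is a real gap in your final step. Your reduction of the $J$-positivity of $X-S_+$ to vectors of $\M_+$ can be made correct (provided you decompose $f$ with the $J$-selfadjoint projection $P_{\M_+//\M_+^{\ort}}$ rather than with $Q=P_{\M_+//\M_-}$: since $(I-Q)f\in\M_-$ and $\M_-\not\subseteq\M_+^{\ort}\subseteq N(X-S_+)$ in general, neither the identity $(X-S_+)f=(X-S_+)Qf$ nor the vanishing of the cross term is justified with $Q$). But the estimate you then make on $\M_+$ is a non sequitur: from $\K{Sg}{g}\le\K{Xg}{g}$ and $\K{S_-g}{g}\ge 0$ you conclude $\K{S_+g}{g}=\K{Sg}{g}+\K{S_-g}{g}\le\K{Xg}{g}$, i.e.\ you add a nonnegative quantity to the smaller side of an inequality and keep the inequality. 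Since, as you yourself observe, $\M_+\not\subseteq\M_-^{\ort}$ in general, $\K{S_-g}{g}=\sum_{i\in I_-}|\K{g}{f_i}|^2$ need not vanish for $g\in\M_+$, and nothing in your hypotheses bounds $\K{S_+g}{g}$ from above by $\K{Xg}{g}$ on $\M_+$.

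The repair is to test on $\M_-^{\ort}$ rather than on $\M_+$: there $S_-$ vanishes, because $N(S_-)=R(S_-)^{\ort}=\M_-^{\ort}$, so for $h\in\M_-^{\ort}$ one has $\K{S_+h}{h}=\K{Sh}{h}\le\K{Xh}{h}$ with no extra term to control. The reduction from $\HH$ to $\M_-^{\ort}$ uses the decomposition $\HH=\M_-^{\ort}\dotplus\M_+^{\ort}$, i.e.\ the projection $Q^+$ (range $\M_-^{\ort}$, nullspace $\M_+^{\ort}\subseteq N(X-S_+)$), and the cross terms vanish exactly as in your argument because $(X-S_+)(\HH)\subseteq\M_+$ while $(I-Q^+)f\in\M_+^{\ort}$. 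This is precisely the paper's one-line computation $0\leq_J Q(X-S)Q^+=QXQ^+-QSQ^+=X-S_+$, which rests on $QXQ^+=X$ (from $R(X)\subseteq\M_+$ and $\M_+^{\ort}\subseteq N(X)$) and $QSQ^+=QS=S_+$ (from the intertwining $QS=SQ^+$). The rest of your proposal --- that $S_+$ belongs to the competing set, that $S\leq_J S_+$ because $S_+-S=S_-$ is $J$-positive, and the anti-space duality giving the statement for $-S_-$ --- is sound and matches the paper.
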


\begin{proof}
Observe that $P_+$ is a $J_2$-expansive projection (i.e. $I\leq_{J_2} P_+$) because it is $J_2$-selfadjoint and its nullspace is uniformly $J_2$-negative.  Also, $P_-$ is a $J_2$-contractive projection (i.e. $P_-\leq_{J_2} I$) because it is $J_2$-selfadjoint and its nullspace is uniformly $J_2$-positive,  see \cite[Prop. 5]{Hassi}.

First, note that if 
\[
\mathfrak{X}:=\{X=X^+:\ S\leq_J X, \ R(X)\subseteq \M_+\}
\]
 then $S_+\in \mathfrak{X}$. Indeed, $R(S_+)=\M_+$ and, since $P_+$ is $J$-expansive, it follows that $S=TT^+\leq_J TP_+T^+=S_+$. If $X\in \mathfrak{X}$ then $S\leq_J X$ and $R(X)\subseteq \M_+$. So, $\M_+^{\ort}\subseteq R(X)^{\ort}=N(X)$ and, if $Q=P_{\M_+//\M_-}$,  
\[
0 \leq_J Q(X-S)Q^+ = QXQ^+ - QSQ^+=X-S_+.
\]
Thus, $S_+=\min_{\leq_J} \mathfrak{X}$. 

%
On the other hand, if 
\[
\mathfrak{Y}:=\{Y=Y^+:\ Y\leq_J S, \ R(X)\subseteq \M_-\}, 
\]
then $-S_-\in \mathfrak{Y}$. Indeed, $R(S_-)=\M_-$ and, since $P_-$ is $J_2$-contractive, it follows that $-S_-=TP_-T^+\leq_J TT^+=S$. If $Y\in \mathfrak{Y}$ then $Y\leq_J S$ and $R(Y)\subseteq \M_-$. So, $\M_-^{\ort}\subseteq N(Y)$ and
\[
0 \leq_J (I-Q)(S-Y)(I-Q)^+=-S_- - Y.
\]
i.e. $-S_-=\max_{\leq_J} \mathfrak{Y}$. 
\end{proof}

Finally, the class of $J$-frame operators can be characterized in the following way, see \cite[Proposition~5.7]{GMMPM12}.

\begin{thm}\label{oldie}
A bounded invertible $J$-selfadjoint operator $S$ in a Krein space $\mathcal H$ is a $J$-frame
operator if and only if the following conditions are satisfied:
\begin{enumerate}
\item[\rm(i)] there exists a maximal uniformly $J$-positive subspace
 $\mathcal L_+$ of $\mathcal H$ such that $S(\mathcal L_+)$ is also
 maximal uniformly $J$-positive;
\item[\rm(ii)]  $\K{Sf}{f} \geq 0$ for every $f\in \mathcal L_+$;
\item[\rm(iii)]  $\K{Sg}{g} \leq 0$ for every $g \in (S(\mathcal L_+))^{[\perp]}$.
\end{enumerate}
\end{thm}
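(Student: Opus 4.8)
The plan is to prove the two implications separately. For the forward direction, assume $\F=\{f_i\}_{i\in I}$ is a $J$-frame with $J$-frame operator $S$ and $\M_\pm$ as in \eqref{emes}. I would take $\mc{L}_+:=\M_-^{\ort}$, which is maximal uniformly $J$-positive because $\M_-$ is maximal uniformly $J$-negative; then \eqref{mapping} gives $S(\mc{L}_+)=S(\M_-^{\ort})=\M_+$, so (i) holds and $(S(\mc{L}_+))^{\ort}=\M_+^{\ort}$. If $f\in\mc{L}_+=\M_-^{\ort}$ then $\K{f}{f_i}=0$ for every $i\in I_-$, so $Sf=S_+f$ and $\K{Sf}{f}=\sum_{i\in I_+}|\K{f}{f_i}|^2\geq 0$ by \eqref{eses positivos}, which is (ii). Dually, if $g\in\M_+^{\ort}$ then $\K{g}{f_i}=0$ for $i\in I_+$, so $Sg=-S_-g$ and $\K{Sg}{g}=-\sum_{i\in I_-}|\K{g}{f_i}|^2\leq 0$, which is (iii).

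For the converse, suppose $S$ is bounded, invertible, $J$-selfadjoint and satisfies (i)--(iii). I would set $\M_+:=S(\mc{L}_+)$ and $\M_-:=\mc{L}_+^{\ort}$. By (i), $\M_+$ is maximal uniformly $J$-positive; $\M_-$ is maximal uniformly $J$-negative, being the $J$-orthogonal companion of a maximal uniformly $J$-positive subspace, and hence $\HH=\M_+\dotplus\M_-$ by \cite[Ch.~1, Cor.~5.2]{AI89}. Since the uniformly definite subspaces involved are regular, $\M_-^{\ort}=\mc{L}_+$ and $\M_+^{\ort}=(S(\mc{L}_+))^{\ort}=S^{-1}(\M_-)$, and likewise $\HH=\mc{L}_+\dotplus\M_+^{\ort}$. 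Let $R:=P_{\mc{L}_+//\M_+^{\ort}}$ (this is the operator $Q^+$ attached to $Q=P_{\M_+//\M_-}$ in \eqref{eses con q}), and define $S_+:=SR$ and $S_-:=-S(I-R)$, so that $S=S_+-S_-$.

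The point of hypotheses (ii) and (iii) is that they force $S_\pm$ to be $J$-positive. For $f\in\HH$ write $f=\ell+m$ with $\ell=Rf\in\mc{L}_+$ and $m=(I-R)f\in\M_+^{\ort}$. Since $S\ell\in S(\mc{L}_+)=\M_+$ is $J$-orthogonal to $m\in\M_+^{\ort}$, we get $\K{S_+f}{f}=\K{S\ell}{\ell}\geq 0$ by (ii); since $Sm\in S(\M_+^{\ort})=\M_-$ is $J$-orthogonal to $\ell\in\M_-^{\ort}$, we get $\K{S_-f}{f}=-\K{Sm}{m}\geq 0$ by (iii). Moreover $R(S_+)=\M_+$ and $N(S_+)=\M_+^{\ort}$, while $R(S_-)=\M_-$ and $N(S_-)=\M_-^{\ort}$; using that $\M_\pm$ are regular, this yields that $S_+|_{\M_+}$ is a bounded positive invertible operator on the Hilbert space $(\M_+,\K{\cdot}{\cdot})$ and $-S_-|_{\M_-}$ is a bounded positive invertible operator on the Hilbert space $(\M_-,-\K{\cdot}{\cdot})$.

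To conclude I would exhibit a $J$-frame with $J$-frame operator $S$: pick orthonormal bases $\{g_i\}_{i\in I_+}$ of $(\M_+,\K{\cdot}{\cdot})$ and $\{h_i\}_{i\in I_-}$ of $(\M_-,-\K{\cdot}{\cdot})$, and set $f_i:=(S_+|_{\M_+})^{1/2}g_i$ for $i\in I_+$ and $f_i:=(-S_-|_{\M_-})^{1/2}h_i$ for $i\in I_-$, so that the Hilbert-space frame operators of the two subfamilies are $S_+|_{\M_+}$ and $-S_-|_{\M_-}$, and each $f_i$ is a nonzero vector of $\M_+$ (resp.\ $\M_-$), hence $J$-positive (resp.\ $J$-negative). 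Then $\F:=\{f_i\}_{i\in I_+\cup I_-}$ is a Bessel family in $\HH$ with $R(T_\pm)=\M_\pm$, so it is a $J$-frame by Definition \ref{Jframe}; inserting the $J$-selfadjoint projections onto $\M_\pm$ and using $N(S_\pm)=\M_\pm^{\ort}$, one verifies $\sum_{i\in I_+}\K{f}{f_i}f_i=S_+f$ and $\sum_{i\in I_-}\K{f}{f_i}f_i=S_-f$ for every $f\in\HH$, so the $J$-frame operator of $\F$ equals $S_+-S_-=S$. The step I expect to be the main obstacle is the geometry of the converse: singling out $\M_-=\mc{L}_+^{\ort}$ as the correct complementary subspace, establishing $\HH=\M_+\dotplus\M_-$ and $\HH=\mc{L}_+\dotplus\M_+^{\ort}$, and noticing that (ii) and (iii) are exactly what collapse $\K{S_\pm f}{f}$ to $\pm\K{S(\cdot)}{(\cdot)}$ on $\mc{L}_+$ and on $\M_+^{\ort}$; everything after that is a routine verification.
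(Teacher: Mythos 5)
Your proof is correct, but note that the paper does not actually prove this theorem: it quotes it from [GMMPM12, Prop.~5.7] and only verifies the forward implication in the paragraph following the statement. Your forward direction coincides exactly with that verification (take $\mc{L}_+=\M_-^{\ort}$, get (i) from \eqref{mapping} and (ii)--(iii) from $N(S_\mp)=\M_\mp^{\ort}$ together with \eqref{eses positivos}). The converse is therefore your own contribution relative to this paper, and it is sound: $\M_-:=\mc{L}_+^{\ort}$ is maximal uniformly $J$-negative, $(S(\mc{L}_+))^{\ort}=S^{-1}(\mc{L}_+^{\ort})=S^{-1}(\M_-)$ by the $J$-selfadjointness and invertibility of $S$, both direct sums follow from [AI89, Ch.~1, Cor.~5.2], and $R=P_{\mc{L}_+//\M_+^{\ort}}$ is indeed $Q^+$. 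Hypotheses (ii)--(iii) kill exactly the cross terms ($S\ell\in\M_+$ paired against $m\in\M_+^{\ort}$, and $Sm\in\M_-$ paired against $\ell\in\M_-^{\ort}$), so $S_\pm$ are $J$-positive, and pulling orthonormal bases of $(\M_\pm,\pm\K{\cdot}{\cdot})$ through the square roots of $S_+|_{\M_+}$ and $-S_-|_{\M_-}$ produces a $J$-frame whose $J$-frame operator is $S_+-S_-=S$. Two small points deserve to be written out in a final version: that $S_+=SQ^+=QS$ is $J$-selfadjoint, so its restriction to $\M_+$ really is a positive invertible operator on the Hilbert space $(\M_+,\K{\cdot}{\cdot})$ (this is what licenses the square root), and the Bessel bound for the resulting family, which follows from the equivalence of norms on uniformly definite subspaces. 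The net effect of your route is a self-contained proof of a statement the paper imports from elsewhere.
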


Note that, if $\F=\{f_i\}_{i\in I}$ is a $J$-frame for $\HH$ with $J$-frame operator $S$ and $\M_\pm$ are given by \eqref{emes}, then $\M_-^{\ort}$ is a maximal uniformly positive subspace satisfying the conditions (i)--(iii) in Theorem \ref{oldie}. In fact, (i) follows from \eqref{mapping}. On the other hand, since $N(S_-)=R(S_-)^{\ort}=\M_-^{\ort}$, for $f\in \M_-^{\ort}$ we have that
\[
\K{Sf}{f}=\K{S_+f}{f}=\sum_{i\in I_+}|\K{f}{f_i}|^2 \geq 0,
\]
see \eqref{eses positivos}. Analogously, $S\big(\M_-^{\ort}\big)^{\ort}=\M_+^{\ort}=N(S_+)$ and if $g\in \M_+^{\ort}$ then
\[
\K{Sg}{g}=\K{-S_-g}{g}=-\sum_{i\in I_-}|\K{g}{f_i}|^2 \leq 0.
\]
Thus, we have also shown conditions (ii) and (iii).

\begin{obs}\label{desig estr}
Given a $J$-frame $\F$ for $\HH$, consider the operator $S_\pm$ given in \eqref{eses positivos}. Since $S_\pm$ is $J$-positive then, for $f\in \M_\mp^{\ort}$, 
\[
\K{S_\pm f}{f}=0 \qquad \Leftrightarrow \qquad S_\pm f=0.
\]
Therefore, $\K{S f}{f}>0$ for every $f\in \M_-^{\ort}\setminus\{0\}$ and $\K{S f}{f}<0$ for every $f\in \M_+^{\ort}\setminus\{0\}$. 
\end{obs}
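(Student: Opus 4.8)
The statement to be proven has two parts: the displayed equivalence $\K{S_\pm f}{f}=0\Leftrightarrow S_\pm f=0$, and the ``therefore'' clause giving the strict sign of $\K{Sf}{f}$ on $\M_-^{\ort}$ and on $\M_+^{\ort}$. The plan is to establish the equivalence as a general property of the $J$-positive operators $S_\pm$ (the hypothesis $f\in\M_\mp^{\ort}$ is not actually needed for it), and then to read off the ``therefore'' clause using $S=S_+-S_-$, the invertibility of $S$, and the identities $N(S_+)=\M_+^{\ort}$, $N(S_-)=\M_-^{\ort}$ recorded right after Theorem~\ref{oldie}.

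For the equivalence, the implication $S_\pm f=0\Rightarrow\K{S_\pm f}{f}=0$ is trivial. For the converse I would use that $S_\pm$ is $J$-positive \emph{and} $J$-selfadjoint; the latter holds because $S_+=QS=SQ^+$ and $S_-=-(I-Q)S=-S(I-Q)^+$ with $S^+=S$, so that $(S_\pm)^+=S_\pm$. Then $(f,g)\mapsto\K{S_\pm f}{g}$ is a positive semidefinite hermitian sesquilinear form on $\HH$, hence the generalized Schwarz inequality $|\K{S_\pm f}{g}|^2\le\K{S_\pm f}{f}\,\K{S_\pm g}{g}$ holds for all $f,g\in\HH$; thus $\K{S_\pm f}{f}=0$ forces $\K{S_\pm f}{g}=0$ for every $g\in\HH$, i.e. $S_\pm f\in\HH^{\ort}=\{0\}$ by the non-degeneracy of the Krein space $\HH$, and so $S_\pm f=0$. (A more pedestrian alternative, bypassing the Schwarz inequality: by \eqref{eses positivos} one has $\K{S_\pm f}{f}=\sum_{i\in I_\pm}|\K{f}{f_i}|^2$, so if this vanishes then $\K{f}{f_i}=0$ for every $i\in I_\pm$, whence $S_\pm f=\sum_{i\in I_\pm}\K{f}{f_i}f_i=0$, the series converging since $\F$ is Bessel.)

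For the ``therefore'' clause, let $f\in\M_-^{\ort}\setminus\{0\}$. Since $\M_-^{\ort}=N(S_-)$ we have $S_-f=0$, hence $Sf=S_+f-S_-f=S_+f$; and since $S$ is invertible and $f\neq0$, this forces $S_+f=Sf\neq0$. By the equivalence together with the $J$-positivity of $S_+$ we conclude $\K{Sf}{f}=\K{S_+f}{f}>0$. The case $f\in\M_+^{\ort}\setminus\{0\}$ is symmetric: there $\M_+^{\ort}=N(S_+)$ gives $S_+f=0$, so $Sf=-S_-f\neq0$, and the equivalence applied to $S_-$ yields $\K{Sf}{f}=-\K{S_-f}{f}<0$.

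I do not anticipate a genuine obstacle here. The single non-formal step is the converse of the equivalence, which is handled either by the generalized Schwarz inequality for $J$-positive operators combined with the non-degeneracy of $\HH$, or by the one-line computation from \eqref{eses positivos}; everything else is bookkeeping with the facts $S=S_+-S_-$, $N(S_\pm)=\M_\pm^{\ort}$, and the invertibility of $S$, all already at our disposal.
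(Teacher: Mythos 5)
Your proposal is correct and follows essentially the same route the paper intends: the equivalence comes from the $J$-positivity of $S_\pm$ (either via the generalized Schwarz inequality or, more directly, from $\K{S_\pm f}{f}=\sum_{i\in I_\pm}|\K{f}{f_i}|^2$ as in \eqref{eses positivos}), and the strict signs then follow from $N(S_\mp)=\M_\mp^{\ort}$, $S=S_+-S_-$, and the invertibility of $S$. Your side remark that the hypothesis $f\in\M_\mp^{\ort}$ is not needed for the displayed equivalence is also accurate; the paper only invokes it there because that is where the equivalence gets applied.
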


\medskip

\section{Dual families of $J$-frames}\label{Duales}

\begin{Def}
Given a $J$-frame $\F=\{f_i\}_{i\in I}$ for $\HH$, a Bessel family $\G=\{g_i\}_{i\in I}$ is a \emph{dual family} for $\F$ if $\s_i:=\sgn\K{f_i}{f_i}=\sgn\K{g_i}{g_i}$ for every $i\in I$ and 
\begin{equation}\label{dual}
	f=\sum_{i\in I}\s_i\K{f}{f_i}g_i=\sum_{i\in I}\s_i\K{f}{g_i}f_i, 
\end{equation}
for every $f\in \HH$.
\end{Def}

Every $J$-frame $\F=\{f_i\}_{i\in I}$ for a Krein space $\HH$ admits dual families, e.g. \eqref{irf} 
shows that $\{S^{-1}f_i\}_{i\in I}$ is a dual family for $\F$. 

\medskip

Assume that  $\F$ is a $J$-frame for $\HH$ and $\M_\pm$ are given by \eqref{emes}. 
If $\G=\{g_i\}_{i\in I}$ is a dual family for $\F$ and $\N_\pm:=\ol{\Span\{g_i : i\in I_\pm\}}$, it follows from \eqref{dual} that
\[
\M_\pm^{\ort}\subseteq \N_\mp, \ \ \ \text{or equivalently,}\ \ \ \N_\mp^{\ort}\subseteq \M_\pm.
\]
Therefore, $\N_+$ (resp. $\N_-$) is a regular subspace of $\HH$, because it contains a maximal uniformly $J$-positive (res. $J$-negative) subspace of $\HH$, see \cite[Ch. 1, Ex. 2 to \S 7]{AI89}. Hence, there exists a uniformly $J$-positive (res. $J$-negative) subspace $\mc{T}_\pm$ of $\HH$ such that
\[
\N_\pm= \M_\mp^{\ort} \sdo \mc{T}_\mp.
\]
Then, $\G$ is a frame for $\HH$ because $\HH=\M_+^{\ort}\dotplus \M_-^{\ort}\subseteq \N_- + \N_+=\ol{\Span\{g_i\}_{i\in I}}$, but it is not necessarily a $J$-frame since $\mc{T}_\pm$ need not to be trivial. 

\medskip
\begin{exa}
Given the standard basis $\{e_1,e_2,e_3\}$ of $\CC^3$, consider the indefinite inner-product
\[
\K{x}{y}=x_1\ol{y_1}+x_2\ol{y_2}-x_3\ol{y_3}, 
\]
where $x=(x_1,x_2,x_3), y=(y_1,y_2,y_3)\in\CC^3$.
It is easy to see that $\F=\{f_1=e_1, f_2=e_2, f_3=e_3, f_4=e_1, f_5=e_2, f_6=e_3\}$
is a $J$-frame for $\CC^3$ with $\M_+=\Span\{e_1,e_2\}$ and $\M_-=\Span\{e_3\}$.

Also, it is immediate that the Bessel family $\mc{G}=\{g_1,\ldots, g_6\}$ composed by the vectors
\[
g_1=(\tfrac{1}{2},\tfrac{1}{2}, -\tfrac{1}{2}), \ \ g_2=(1, \tfrac{1}{2}, -1), \ \ g_3=(0,0,\tfrac{1}{2}), 
\]
\[
g_4=(\tfrac{1}{2},-\tfrac{1}{2},\tfrac{1}{2}), \ \ g_5=(-1,\tfrac{1}{2},1), \ \ g_6=(0,0,\tfrac{1}{2}),
\]
is a dual family for $\F$. But note that $\N_+=\Span\{g_1, g_2, g_3, g_4\}=\CC^3$ because $e_1=g_1 + g_4$, $e_2=g_2 + g_5$ and $e_3=\tfrac{1}{2} g_5-\tfrac{1}{2}g_2 +g_1 +g_4$. Therefore, $\G$ is not a $J$-frame for $\CC^3$.
\end{exa}

By Proposition \ref{canonical dual}, the dual family $\{S^{-1}f_i\}_{i\in I}$ is a $J$-frame for $\HH$ (and it is similar to $\F$). The next result gives a characterization of those dual families of a $J$-frame which are themselves $J$-frames.

\begin{thm}\label{J-frames duales}
Let $\F=\{f_i\}_{i\in I}$ be a $J$-frame for $\HH$. Assume that $\G=\{g_i\}_{i\in I}$ is a dual family for $\F$ and define $\N_\pm=\ol{\Span\{g_i : i\in I_\pm\}}$. Then, $\G$ is a $J$-frame for $\HH$ if and only if $\N_+\cap \N_-=\{0\}$ and
\[
N(V)=N(V)\cap\ell_2(I_+) \oplus N(V)\cap \ell_2(I_-),
\]
where $V:\ell_2(I)\ra \HH$ is the synthesis operator of $\G$.
\end{thm}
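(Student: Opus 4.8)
The plan is to exploit the fact that $\G$ is already known to be a frame for $\HH$ (as noted just before the theorem), so that its synthesis operator $V:\ell_2(I)\ra\HH$ is surjective with closed range, and the only remaining issue is whether the decomposition induced by the partition $I=I_+\cup I_-$ makes $\G$ a \emph{$J$-frame}. Writing $V_\pm:=VP_\pm$, by Definition \ref{Jframe} one must show that $R(V_+)$ is maximal uniformly $J$-positive and $R(V_-)$ is maximal uniformly $J$-negative, and conversely that these two geometric properties force the two stated conditions. The two hypotheses are tailored exactly to this: $N(V)=N(V)\cap\ell_2(I_+)\oplus N(V)\cap\ell_2(I_-)$ is the analogue of Lemma \ref{Jframe descompone el nucleo} (it says $P_+$ commutes with $P_{N(V)}$), and $\N_+\cap\N_-=\{0\}$ is the non-degeneracy/directness condition that upgrades ``$R(V_+)+R(V_-)$ spans $\HH$'' to a genuine direct-sum fundamental decomposition.

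First I would record the structural facts about $\N_\pm$ already derived in the text: since $\G$ is a dual family for $\F$, one has $\M_\mp^{\ort}\subseteq\N_\pm$, hence $\N_\pm$ is regular and $\N_\pm=\M_\mp^{\ort}\,\sdo\,\mc{T}_\mp$ with $\mc{T}_\pm$ uniformly $J$-positive/negative. Next, the key bookkeeping step: $R(V_\pm)$ sits between $\Span\{g_i:i\in I_\pm\}$ and $\N_\pm$, and I claim that the hypothesis $N(V)=N(V)\cap\ell_2(I_+)\oplus N(V)\cap\ell_2(I_-)$ forces $R(V_+)$ and $R(V_-)$ to be closed with $R(V_+)=\N_+$, $R(V_-)=\N_-$ and $R(V)=R(V_+)\dotplus R(V_-)$. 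The argument mirrors the proof of Lemma \ref{Jframe descompone el nucleo}: the decomposition of $N(V)$ is equivalent to $P_+P_{N(V)}=P_{N(V)}P_+$, i.e. $P_+$ leaves $N(V)$ (and $N(V)^{\ort}$) invariant; then $V_+=VP_+$ restricted to $N(V)^{\ort}\cap\ell_2(I_+)$ is injective with the same (closed) range as $V_+$, so $R(V_+)$ is closed, and it equals $\N_+$ since it is dense in $\N_+$ and closed. Moreover $R(V_+)\cap R(V_-)\subseteq\N_+\cap\N_-=\{0\}$, giving $\HH=R(V)=\N_+\dotplus\N_-$.

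Now for the forward implication, assuming $\N_+\cap\N_-=\{0\}$ and the $N(V)$-decomposition: from $\N_+\dotplus\N_-=\HH$ together with $\N_\pm=\M_\mp^{\ort}\,\sdo\,\mc{T}_\mp$ and a dimension/codimension count (using that $\M_\mp^{\ort}$ is maximal uniformly definite and regular), I would deduce $\mc{T}_\pm=\{0\}$, hence $\N_+=\M_-^{\ort}$ and $\N_-=\M_+^{\ort}$; these are maximal uniformly $J$-positive and $J$-negative respectively (Remark \ref{desig estr} / the discussion after Theorem \ref{oldie}), so $R(V_+)=\N_+$ and $R(V_-)=\N_-$ satisfy Definition \ref{Jframe}, and $\G$ is a $J$-frame. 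Conversely, if $\G$ is a $J$-frame, then by the analogue of \eqref{suma} we get $R(V_+)\dotplus R(V_-)=\HH$ with $R(V_\pm)=\N_\pm$, which immediately yields $\N_+\cap\N_-=\{0\}$; and the $N(V)$-decomposition follows from Lemma \ref{Jframe descompone el nucleo} applied to $\G$ itself (since $Vx=0\iff V_+x=V_-x=0$, using $R(V_+)\cap R(V_-)=\{0\}$). The main obstacle is the clean deduction $\mc{T}_\pm=\{0\}$ from $\N_+\dotplus\N_-=\HH$: one must argue carefully that two regular subspaces, one containing a maximal uniformly positive subspace and the other a maximal uniformly negative one, can be complementary only when each equals that maximal subspace — this is where maximality of $\M_\mp^{\ort}$ and the fact that $\mc{T}_\mp\subseteq\N_\mp$ is uniformly definite (hence its direct sum with a maximal definite subspace of the opposite sign would still be ``too big'') are used, via \cite[Ch.~1, Corollary 5.2]{AI89} or a direct codimension argument.
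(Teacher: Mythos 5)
Your proposal is correct and follows essentially the same route as the paper: both implications, the appeal to Lemma \ref{Jframe descompone el nucleo} for the forward direction, the commutation of $P_+$ with $P_{N(V)}$ yielding closedness of $R(V_\pm)$ and hence $R(V_\pm)=\N_\pm$ (the paper cites \cite{Bou,Izu} here where you argue directly, which is equally valid), and the reduction to showing $\mc{T}_\pm=\{0\}$. The step you flag as the main obstacle is in fact immediate and needs no codimension count: since $\M_+^{\ort}\dotplus\M_-^{\ort}=\HH$ already holds by maximality (via \cite[Ch.~1, Corollary 5.2]{AI89}), the chain $\HH=\N_+\dotplus\N_-=(\M_+^{\ort}\dotplus\M_-^{\ort})\dotplus(\mc{T}_+\dotplus\mc{T}_-)=\HH\dotplus(\mc{T}_+\dotplus\mc{T}_-)$ forces $\mc{T}_\pm=\{0\}$.
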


\begin{proof}
Assume that $\G=\{g_i\}_{i\in I}$ is a $J$-frame for $\HH$ with synthesis operator $V:\ell_2(I)\ra \HH$. By definition, $R(V_+)=\N_+$ is a maximal uniformly $J$-positive subspace and $R(V_-)=\N_-$ is a maximal uniformly $J$-negative subspace. Thus, $\N_+\cap \N_-=\{0\}$. Also, by Lemma \ref{Jframe descompone el nucleo}, the desired decomposition of $N(V)$ follows. 

Conversely, assume that $\G=\{g_i\}_{i\in I}$ satisfies $N(V)=N(V)\cap\ell_2(I_+) \oplus N(V)\cap \ell_2(I_-)$ and  $\N_+\cap \N_-=\{0\}$. This orthogonal decomposition of the nullspace says that the orthogonal projections $P_{N(V)}$ and $P_+$ commute. 
Also, it implies that $N(V)+\ell_2(I_+)=N(V)\cap \ell_2(I_-) \oplus \ell_2(I_+)$ is a closed subspace. This last condition is equivalent to the closedness of $R(V_\pm)=R(VP_\pm)$, see \cite{Bou,Izu}. Therefore, $R(V_\pm)=\N_\pm$.
 On the other hand, if $\N_+\cap \N_-=\{0\}$ then
\begin{eqnarray*}
\HH &=& \N_+ \dotplus \N_-= (\M_+^{\ort} \sdo \mc{T}_-)\dotplus ((\M_-^{\ort} \sdo \mc{T}_+) = (\M_+^{\ort}\dotplus \M_-^{\ort}) \dotplus (\mc{T}_- \dotplus \mc{T}_+) \\ &=& \HH \dotplus (\mc{T}_- \dotplus \mc{T}_+), 
\end{eqnarray*}
which implies that $\mc{T}_-=\mc{T}_+=\{0\}$. Therefore, $\N_\pm= \M_\mp^{\ort}$ and $\G$ is a $J$-frame for $\HH$.
\end{proof}

As a consequence of the proof of Theorem \ref{J-frames duales}, the following is immediate. 

\begin{cor}\label{rangos duales}
Given a $J$-frame $\F$ for $\HH$, consider $\M_\pm$ as in \eqref{emes}. Assume that $\G$ is a dual family for $\F$, with synthesis operator $V$. Then, $\G$ is a $J$-frame if and only if $R(V_\pm)=\M_\mp^{\ort}$.
\end{cor}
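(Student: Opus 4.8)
The plan is to read both implications off the proof of Theorem \ref{J-frames duales}, since Corollary \ref{rangos duales} only reformulates the conditions that appear there ($\N_+\cap\N_-=\{0\}$ together with the orthogonal splitting of $N(V)$) directly in terms of the ranges $R(V_\pm)$. No new machinery should be needed; the work is in checking that the two formulations are interchangeable here.

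For the ``only if'' part I would start from the hypothesis that $\G$ is a $J$-frame. Then, by Definition \ref{Jframe}, $R(V_+)$ is a maximal uniformly $J$-positive subspace and $R(V_-)$ is a maximal uniformly $J$-negative one; in particular both are closed, so $R(V_\pm)=\N_\pm$. Since a nonzero vector cannot be simultaneously $J$-positive and $J$-negative, $\N_+\cap\N_-=\{0\}$, and then I would invoke the computation already carried out in the proof of Theorem \ref{J-frames duales}: writing $\N_+=\M_-^{\ort}\sdo\mc{T}_-$ and $\N_-=\M_+^{\ort}\sdo\mc{T}_+$ as before that theorem and using $\M_+^{\ort}\dotplus\M_-^{\ort}=\HH$ gives
\[
\HH=\N_+\dotplus\N_-=(\M_+^{\ort}\dotplus\M_-^{\ort})\dotplus(\mc{T}_-\dotplus\mc{T}_+)=\HH\dotplus(\mc{T}_-\dotplus\mc{T}_+),
\]
which forces $\mc{T}_\pm=\{0\}$ and hence $R(V_\pm)=\N_\pm=\M_\mp^{\ort}$.

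For the ``if'' part I would assume $R(V_\pm)=\M_\mp^{\ort}$ and argue that these are exactly the defining properties of a $J$-frame. Since $\F$ is a $J$-frame, $\M_-$ is maximal uniformly $J$-negative and $\M_+$ is maximal uniformly $J$-positive, so their $J$-orthogonal companions $\M_-^{\ort}$ and $\M_+^{\ort}$ are, respectively, maximal uniformly $J$-positive and maximal uniformly $J$-negative (the standard duality for uniformly definite subspaces, already used before Remark \ref{desig estr}). As $\G$ is a dual family it is in particular a Bessel family, so its synthesis operator $V$ is bounded, and the hypotheses $R(V_+)=\M_-^{\ort}$, $R(V_-)=\M_+^{\ort}$ are then precisely the requirements of Definition \ref{Jframe}; hence $\G$ is a $J$-frame.

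The only place asking for a word of care is the step $R(V_\pm)=\N_\pm$ in the ``only if'' direction, i.e.\ knowing a priori that the ranges are closed; but this is harmless, since a maximal uniformly definite subspace is automatically closed. So I do not expect a genuine obstacle here: the corollary is essentially a bookkeeping restatement of the content already established for Theorem \ref{J-frames duales}, and the proof can legitimately be presented as a one-paragraph consequence of it.
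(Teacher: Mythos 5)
Your proposal is correct and matches the paper's intent exactly: the paper gives no separate argument, stating only that the corollary is immediate from the proof of Theorem \ref{J-frames duales}, and your two directions (reading $R(V_\pm)=\N_\pm=\M_\mp^{\ort}$ off the maximality/closedness of the ranges and the $\mc{T}_\pm=\{0\}$ computation for ``only if'', and verifying Definition \ref{Jframe} directly from the maximal uniform definiteness of $\M_\mp^{\ort}$ for ``if'') are precisely the intended bookkeeping. No gaps.
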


Assume that $\F=\{f_i\}_{i\in I}$ is a $J$-frame for $\HH$ and $\G=\{g_i\}_{i\in I}$ is a Bessel family in $\HH$. Let $Q$ be as in \eqref{Q}. If $T$ and $V$ are the synthesis operators of $\F$ and $\G$, respectively, they satisfy Eq. \eqref{dual} if and only if
\[
TP_+V^+=T_+V_+^+=Q,
\]
and $TP_-V^+=T_-V_-^+=I-Q$,
or equivalently, $VP_+T^+=Q^+$ and $VP_-T^+=I-Q^+$.

\medskip 

The following proposition describes the synthesis operators of the dual families associated to a given $J$-frame.

\begin{prop}\label{familiasduales}
Let $\F=\{f_i\}_{i\in I}$ be a $J$-frame for $\HH$ with synthesis operator $T$. Given a Bessel family $\G=\{g_i\}_{i\in I}$ with synthesis operator $V$, $\G$ is a dual family for $\F$ if and only if $V=S^{-1}T + W$ where $W$ satisfies $N(T)^{\ort}\subseteq N(W)$.
\end{prop}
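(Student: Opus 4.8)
The plan is to translate the defining equations \eqref{dual} for a dual family directly into an operator identity relating the synthesis operators $T$ and $V$, and then solve that identity. First I would recall from the paragraph preceding the statement that, writing $Q=P_{\M_+//\M_-}$, the family $\G$ (with $\sgn\K{g_i}{g_i}=\sigma_i$) is a dual family for $\F$ if and only if
\[
TP_+V^+=Q \qquad\text{and}\qquad TP_-V^+=I-Q,
\]
which, by adding the two identities and using $P_++P_-=I$, is equivalent to the single condition $TV^+=I$, together with either one of the two (say $T_+V_+^+=Q$). Actually the cleanest route is to observe that the pair of equations in \eqref{dual} is symmetric in $\F$ and $\G$, and taking $J$-adjoints they read $VP_+T^+=Q^+$, $VP_-T^+=I-Q^+$; summing gives $VT^+=I$. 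Conversely, $VT^+=I$ alone already forces both reconstruction formulas in \eqref{dual} once one knows $\sigma_i$ matches and uses $QT=TP_+$ from \eqref{intertwin}. So the first key step is to reduce ``dual family'' to the clean operator equation $VT^+=I$ (equivalently $TV^+=I$).

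Next I would solve $VT^+=I$ for $V$. Since $S=TT^+$ is invertible, the operator $S^{-1}T$ satisfies $(S^{-1}T)T^+=S^{-1}S=I$, so $V_0:=S^{-1}T$ is one particular solution; this is exactly the canonical dual, consistent with \eqref{irf}. Given any other Bessel family $\G$ with synthesis operator $V$, set $W:=V-S^{-1}T$. Then $\G$ is a dual family if and only if $VT^+=I$, i.e. if and only if $WT^+=VT^+-S^{-1}TT^+=I-I=0$, which says precisely $R(T^+)\subseteq N(W)$. Since $T$ is surjective, $R(T^+)=N(T)^{\ort}$, so the condition becomes $N(T)^{\ort}\subseteq N(W)$, as claimed. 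For the converse direction one checks that if $V=S^{-1}T+W$ with $N(T)^{\ort}\subseteq N(W)$, then $V$ is bounded (hence $\G$ is automatically Bessel), $WT^+=0$, and therefore $VT^+=I$; it remains only to verify the sign condition $\sgn\K{g_i}{g_i}=\sigma_i$, which should be built into the statement's hypothesis that $\G$ is a Bessel family with $\sigma_i=\sgn\K{g_i}{g_i}$ — so strictly I should phrase the ``if'' direction as: any such $V$ gives a family satisfying \eqref{dual}, and it is a dual family precisely when additionally the signs agree.

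The one point requiring a little care — and the step I expect to be the main obstacle — is showing that $VT^+=I$ (plus matching signs) really is \emph{equivalent} to the two equations in \eqref{dual}, not merely implied by them. The two equations $TP_+V^+=Q$ and $TP_-V^+=I-Q$ are a priori stronger than their sum $TV^+=I$. The resolution is that $\F$ being a $J$-frame gives us \eqref{intertwin}, $QT=TP_+$; taking $J$-adjoints, $T^+Q^+=P_+T^+$. Then from $VT^+=I$ we get $VP_+T^+=VT^+Q^+=Q^+$ (using $T^+Q^+=P_+T^+$), and similarly $VP_-T^+=I-Q^+$. Taking $J$-adjoints recovers exactly the pair $T_+V_+^+=Q$, $T_-V_-^+=I-Q$, hence \eqref{dual}. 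So the $J$-frame structure of $\F$ is what collapses the two conditions into one, and this is precisely where the hypothesis ``$\F$ is a $J$-frame'' (not merely a frame) is used. With that observation in place, the rest is the routine linear algebra sketched above.
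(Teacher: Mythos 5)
Your proof is correct and follows essentially the same route as the paper: both identify $S^{-1}T$ as a particular solution of $VT^+=I$, characterize the general solution by $WT^+=0$, i.e. $N(T)^{\ort}=R(T^+)\subseteq N(W)$, and use the intertwining relation $QT=TP_+$ (together with $S^{-1}Q=Q^+S^{-1}$) to recover the two separate reconstruction identities from the single equation $VT^+=I$ in the converse. Your remark that the sign condition $\sgn\K{g_i}{g_i}=\sigma_i$ is not forced by the form of $V$ alone is a fair caveat which the paper's own proof also leaves implicit.
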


\begin{proof}
Let $\F=\{f_i\}_{i\in I}$ be a $J$-frame for $\HH$ with synthesis operator $T:\ell_2(I)\ra \HH$. By Proposition \ref{canonical dual}, if $S=TT^+$ then $\F'=\{S^{-1}f_i\}_{i\in I}$ is a dual family for $\F$ with synthesis operator $S^{-1}T$.

Hence, if $\G$ is a dual family for $\F$ with synthesis operator $V:\ell_2(I)\ra \HH$, it follows that 
\[
(V-S^{-1}T)T^+=I-I=0. 
\]
Therefore, $V=S^{-1}T + W$ where $W:=V-S^{-1}T$ satisfies $N(T)^{\ort}=R(T^+)\subseteq N(W)$.

Conversely, assume that $V=S^{-1}T + W$ for some $W:\ell_2(I)\ra \HH$ satisfying $N(T)^{\ort}\subseteq N(W)$. Then, it is immediate that $VT^+=S^{-1}TT^+ + WT^+=I$. Moreover, 
by \eqref{intertwin} and \eqref{eses con q},
\begin{eqnarray*}
VP_+T^+ = S^{-1}TP_+T^+ + WP_+T^+ = S^{-1}QTT^+ + WT^+Q^+=  Q^+S^{-1}S =Q^+.
\end{eqnarray*}
So, $TP_+V^+=Q$ and $TP_-V^+=T(I-P_+)V^+=I-Q$, i.e. $\G$ is a dual family for $\F$.
\end{proof}

Let $\F$ be a $J$-frame with synthesis operator $T$. By Lemma \ref{desc l2+}, if $E$ is the $J$-selfadjoint projection onto $N(T)^{\ort}$ and $P_+$ is the orthogonal projection onto $\ell_2(I_+)$, then $EP_+=P_+E$. Therefore, $F_+:=EP_+$ and $F_-:=(I-E)P_+$ are the $J$-selfadjoint projections onto $\ell_2(I_+)\cap N(T)^{\ort}$ and $\ell_2(I_+)\cap N(T)$, respectively. 

If $\G$ is a dual family for $\F$ with synthesis operator $V$, then 
\[
R(V_+)=R(VF_+ + VF_-)=R(VF_+) + R(VF_-).
\]
By Proposition \ref{familiasduales}, $V=S^{-1}T + W$ for some $W$ such that $WT^+=0$. Hence,
\begin{eqnarray*}
VF_+ = (S^{-1}T + W)F_+ = S^{-1}TF_+=S^{-1}TP_+ = Q^+S^{-1}T.
\end{eqnarray*}
On the other hand, $VF_-= (S^{-1}T + W)F_-= WF_-$. Thus,
\begin{eqnarray*}
\N_+ \supseteq R(V_+)= R(VF_+) + R(VF_-) = R(Q^+S^{-1}T) + R(WF_-) =\M_-^{\ort} + W(\ell_2(I_+)\cap N(T)).
\end{eqnarray*}
Analogously, if $V_-:=VP_-$ it follows that 
\[
\N_-\supseteq R(V_-)= \M_+^{\ort} + W(\ell_2(I_-)\cap N(T)).
\]

\begin{cor}
Let $\F$ be a $J$-frame for $\HH$ with synthesis operator $T$. Assume that $\G$ is a dual family for $\F$ with synthesis operator $V=S^{-1}T + W$, where $W$ is such that $WT^+=0$. If 
$\M_\pm$ is given by \eqref{emes} then, $\G$ is a $J$-frame for $\HH$ if and only if $W(\ell_2(I_\pm))\subseteq \M_\mp^{\ort}$.
\end{cor}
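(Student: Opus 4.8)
The plan is to reduce the statement to Corollary~\ref{rangos duales} by simplifying the range computations carried out just above. Recall from that discussion that, writing $E=T^+S^{-1}T$ for the $J$-selfadjoint projection onto $N(T)^{\ort}$ (Lemma~\ref{desc l2+}) and $F_-=(I-E)P_+$ for the $J$-selfadjoint projection onto $\ell_2(I_+)\cap N(T)$, one obtains
\[
R(V_+)=\M_-^{\ort}+W\big(\ell_2(I_+)\cap N(T)\big)\quad\text{and}\quad R(V_-)=\M_+^{\ort}+W\big(\ell_2(I_-)\cap N(T)\big),
\]
and in particular $\M_\mp^{\ort}\subseteq R(V_\pm)$ always.

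The only new ingredient I would establish is the identity $W\big(\ell_2(I_\pm)\cap N(T)\big)=W\big(\ell_2(I_\pm)\big)$, the inclusion $\subseteq$ being trivial. For the reverse inclusion, observe that $WT^+=0$ forces $WE=0$, i.e. $N(T)^{\ort}=R(E)\subseteq N(W)$, and that $T(I-E)=T-TT^+S^{-1}T=T-T=0$. Given $x\in\ell_2(I_\pm)$, write $x=Ex+(I-E)x$; then $Wx=W(I-E)x$ because $Ex\in N(W)$, while $(I-E)x\in N(T)$ and, by the commutation $EP_\pm=P_\pm E$ from Lemma~\ref{desc l2+}, $(I-E)x=(I-E)P_\pm x=P_\pm(I-E)x\in\ell_2(I_\pm)$. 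Hence $(I-E)x\in\ell_2(I_\pm)\cap N(T)$ and $Wx\in W\big(\ell_2(I_\pm)\cap N(T)\big)$, which proves the equality. Consequently $R(V_+)=\M_-^{\ort}+W\big(\ell_2(I_+)\big)$ and $R(V_-)=\M_+^{\ort}+W\big(\ell_2(I_-)\big)$.

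To conclude, I would invoke Corollary~\ref{rangos duales}: $\G$ is a $J$-frame for $\HH$ if and only if $R(V_\pm)=\M_\mp^{\ort}$. Since $\M_\mp^{\ort}\subseteq R(V_\pm)$ always holds, the last two identities show that $R(V_\pm)=\M_\mp^{\ort}$ is equivalent to $W\big(\ell_2(I_\pm)\big)\subseteq\M_\mp^{\ort}$, which is precisely the asserted condition. No serious obstacle is expected; the only point requiring a little care is the reduction $W\big(\ell_2(I_\pm)\cap N(T)\big)=W\big(\ell_2(I_\pm)\big)$, where the commutation $EP_\pm=P_\pm E$ and the hypothesis $WT^+=0$ (equivalently $WE=0$) are used.
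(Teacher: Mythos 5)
Your proof is correct and follows essentially the same route as the paper's: both reduce the statement to Corollary~\ref{rangos duales} via the range identities $R(V_\pm)=\M_\mp^{\ort}+W\big(\ell_2(I_\pm)\cap N(T)\big)$ obtained in the preceding paragraphs, together with the observation that $WT^+=0$ yields $W\big(\ell_2(I_\pm)\big)=W\big(\ell_2(I_\pm)\cap N(T)\big)$. The only difference is that you spell out this last identity in detail (using $WE=0$ and the commutation $EP_\pm=P_\pm E$ from Lemma~\ref{desc l2+}), whereas the paper merely asserts it as a consequence of the decomposition $\ell_2(I_\pm)=\big(\ell_2(I_\pm)\cap N(T)\big)\sdo\big(\ell_2(I_\pm)\cap N(T)^{\ort}\big)$.
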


\begin{proof}
%
If $\F$ is a $J$-frame for $\HH$ with synthesis operator $T$, Lemma \ref{desc l2+} implies that
\[
\ell_2(I_+)= \ell_2(I_+)\cap N(T) \sdo \ell_2(I_+)\cap N(T)^{\ort}. 
\]
Assume that $\G$ is a dual family for $\F$ with synthesis operator $V=S^{-1}T + W$. Observe that  
%
$WT^+=0$ implies that $W(\ell_2(I_\pm))=W(\ell_2(I_+)\cap N(T))$.

By Corollary \ref{rangos duales}, $\G$ is a $J$-frame for $\HH$ if and only if $R(V_\pm)=\M_\mp^{\ort}$, or equivalently, 
\[
W(\ell_2(I_\pm))=W \ell_2(I_+)\cap N(T))\subseteq \M_\mp^{\ort}. \ \qedhere
\]
%
\end{proof}

Given a $J$-frame $\F=\{f_i\}_{i\in I}$ for $\HH$ and a vector $f\in \HH$, the analysis of $f$ with the canonical dual $J$-frame $\F'=\{S^{-1}f_i\}_{i\in I}$ has minimal $\ell_2$-norm among the families of coefficients obtained by analyzing $f$ with the different dual families for $\F$.

\begin{thm}
Let $\F=\{f_i\}_{i\in I}$ be a $J$-frame for $\HH$. If $\G=\{g_i\}_{i\in I}$ is a dual family for $\F$ then, given $f\in \HH$,
\begin{equation}
	\sum_{i\in I_\pm} |\K{f}{S^{-1}f_i}|^2 \leq \sum_{i\in I_\pm} |\K{f}{g_i}|^2.
\end{equation}
\end{thm}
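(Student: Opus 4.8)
The plan is to reduce the inequality to the familiar Hilbert-space fact that, among all coefficient vectors representing a fixed vector, the one in the orthogonal complement of the synthesis operator's nullspace has minimal $\ell_2$-norm, and then to apply this separately on $\ell_2(I_+)$ and $\ell_2(I_-)$ using the decomposition of $N(T)$ provided by Lemma \ref{Jframe descompone el nucleo}.

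First I would fix $f\in\HH$ and observe that, by \eqref{irf}, the vector $x:=T^+f=\sum_{i\in I}\s_i\K{f}{f_i}e_i = S T^{+}f$... more precisely, the canonical-dual coefficients are $\s_i\K{f}{S^{-1}f_i}$, i.e. the vector $x^{0}:=T^{+}S^{-1}f=(S^{-1}T)^{+}f\cdot$(up to signs) lies in $R(T^{+})=N(T)^{\ort}$. On the other hand, writing $y:=\sum_{i\in I}\s_i\K{f}{g_i}e_i$ for the coefficient vector obtained by analysing $f$ with $\G$, the duality relation \eqref{dual} says precisely that $Ty' = f$ where $y'$ is the vector with entries $\K{f}{g_i}$ — equivalently, $Vx^{0}$-type identities give that $x^{0}$ and $y$ represent the same vector $f$ in the sense that $T$ (with the appropriate sign twist) sends both to $f$. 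The key point is that $x^{0}\in N(T)^{\ort}$, so $\|x^{0}\|\le\|y\|$ with equality only when $y=x^{0}$; this is the standard minimal-norm property. This already yields the inequality with $I_\pm$ replaced by $I$.

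To get the refined statement with the sums split over $I_+$ and $I_-$ separately, I would use that $P_+$ commutes with $P_{N(T)}$ (Lemma \ref{Jframe descompone el nucleo}), hence with $P_{N(T)^{\ort}}$. Therefore $P_{N(T)^{\ort}}$ restricts to a (Hilbert-space) orthogonal projection on $\ell_2(I_+)$, and $P_+x^{0}\in\ell_2(I_+)\cap N(T)^{\ort}$. Since $T_+=TP_+$ and $N(T_+)\supseteq\ell_2(I_+)^{\perp}\oplus(\ell_2(I_+)\cap N(T))$, the orthogonal complement of $N(T_+)$ inside $\ell_2(I)$ is exactly $\ell_2(I_+)\cap N(T)^{\ort}$, which is where $P_+x^{0}$ lives. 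Meanwhile $T_+(P_+y') = T_+ y' = \sum_{i\in I_+}\s_i\K{f}{g_i}f_i$; combined with $T_+(P_+x^{0}) = \sum_{i\in I_+}\s_i\K{f}{S^{-1}f_i}f_i$ and the identity $\sum_{i\in I_+}\s_i\K{f}{S^{-1}f_i}f_i = S_+S^{-1}f = QSS^{-1}f = Qf$ (using $S_+=QS$ from the earlier subsection), together with the analogous computation $\sum_{i\in I_+}\s_i\K{f}{g_i}f_i=Qf$ which follows from Theorem... from the identity $T_+V_+^{+}=Q$ applied to $f$: both $P_+x^{0}$ and $P_+y'$ are mapped by $T_+$ to the same vector $Qf$. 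Since $P_+x^{0}$ is the element of minimal norm in the fibre $T_+^{-1}(Qf)$ (it lies in $N(T_+)^{\perp}$), we get $\|P_+x^{0}\|\le\|P_+y'\|$, which is exactly $\sum_{i\in I_+}|\K{f}{S^{-1}f_i}|^2\le\sum_{i\in I_+}|\K{f}{g_i}|^2$. The case $I_-$ is identical with $P_-$, $T_-$ and $I-Q$.

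The main obstacle I anticipate is bookkeeping the sign factors $\s_i$ cleanly: the coefficient vectors in \eqref{dual} and \eqref{irf} carry a $\s_i$, so one must be careful whether "the coefficient vector" means $(\s_i\K{f}{g_i})_i$ or $(\K{f}{g_i})_i$; these differ by the unitary $J_2=P_+-P_-$, which commutes with $P_\pm$, so $\ell_2$-norms on $\ell_2(I_\pm)$ are unaffected and the argument goes through, but it must be stated precisely. A secondary point is verifying that $P_+ x^0 \in N(T_+)^{\perp}$ — i.e. that the minimal-norm property is applied to the right operator $T_+$ rather than to $T$ — which is exactly what the commutation $P_+P_{N(T)}=P_{N(T)}P_+$ buys us, so Lemma \ref{Jframe descompone el nucleo} is doing the essential work.
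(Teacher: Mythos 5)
Your argument is correct, but it is packaged differently from the paper's. The paper first characterizes all dual families through their synthesis operators (Proposition \ref{familiasduales}: $V=S^{-1}T+W$ with $N(T)^{\ort}\subseteq N(W)$), then expands $VP_\pm V^{+}=(S^{-1}T)P_\pm(S^{-1}T)^{+}+WP_\pm W^{+}$ --- the cross terms vanish since $WT^{+}=0$ combined with $P_\pm T^{+}=T^{+}Q^{+}$, resp.\ $T^{+}(I-Q)^{+}$ --- and concludes from the $J$-positivity of $WP_{+}W^{+}$ and the $J$-negativity of $WP_{-}W^{+}$. You instead run the classical minimal-$\ell_2$-norm argument on the fibers of $T_\pm$: both $P_\pm (S^{-1}T)^{+}f$ and $P_\pm V^{+}f$ are mapped by $T_\pm$ to $Qf$, resp.\ $(I-Q)f$, and the former lies in $N(T_\pm)^{\perp}$, hence has the smaller norm. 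The two proofs rest on exactly the same orthogonality: the paper's vanishing cross term is your Pythagorean decomposition $P_\pm V^{+}f=P_\pm(S^{-1}T)^{+}f+P_\pm W^{+}f$ with $P_\pm W^{+}f\in N(T)$. Neither route is more general, but yours makes explicit the interpretation of the statement as a minimal-norm property of the canonical dual coefficients, at the price of invoking Lemma \ref{Jframe descompone el nucleo} to identify $N(T_\pm)^{\perp}$ with $\ell_2(I_\pm)\cap N(T)^{\ort}$ --- and, implicitly, to justify that $N(T)^{\ort}=N(T)^{\perp}$, which holds because $N(T)$ is $J_2$-invariant; you should state that identification explicitly. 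Your closing remark about the sign factors $\s_i$ is handled correctly: they act as the unitary $J_2$, which commutes with $P_\pm$ and leaves the norms on $\ell_2(I_\pm)$ unchanged.
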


\begin{proof}
If $T:\ell_2(I)\ra \HH$ is the synthesis operator of $\F$, assume that the synthesis operator of $\G$ is given by $V=S^{-1}T + W$, where $W:\ell_2(I)\ra \HH$ is such that $N(T)^{\ort}\subseteq N(W)$. Observe that 
\begin{eqnarray*}
VP_+V^+ = (S^{-1}T + W)P_+(S^{-1}T + W)^+ = (S^{-1}T)P_+(S^{-1}T)^+ + WP_+W^+,
\end{eqnarray*}
and $WP_+W^+$ is a $J$-positive operator. Hence, 
\[
\K{VP_+V^+ f}{f}\geq \K{(S^{-1}T)P_+(S^{-1}T)^+ f}{f},
\]
for every $f\in \HH$. Furthermore, since $V^+f=\sum_{i\in I}\s_i \K{f}{g_i}e_i$ and $(S^{-1}T)^+f=\sum_{i\in I}\s_i \K{f}{S^{-1}f_i}e_i$, it turns out that
\begin{eqnarray*}
\sum_{i\in I_+} |\K{f}{S^{-1}f_i}|^2 &=& \K{P_+(S^{-1}T)^+f}{(S^{-1}T)^+f} = \K{(S^{-1}T)P_+(S^{-1}T)^+ f}{f}  \\ 
&\leq& \K{VP_+V^+ f}{f} = \K{P_+V^+ f}{V^+f} = \sum_{i\in I_+} |\K{f}{g_i}|^2.
\end{eqnarray*}

Analogously, it is easy to see that $VP_-V^+=(S^{-1}T)P_-(S^{-1}T)^+ + WP_-W^+$, and $WP_-W^+$ is a $J$-negative operator. Hence, 
\[
\K{VP_-V^+ f}{f}\leq \K{(S^{-1}T)P_-(S^{-1}T)^+ f}{f},
\]
for every $f\in \HH$. Therefore,
\begin{eqnarray*}
\sum_{i\in I_-} |\K{f}{S^{-1}f_i}|^2 &=& -\K{P_-(S^{-1}T)^+f}{(S^{-1}T)^+f} = -\K{(S^{-1}T)P_+(S^{-1}T)^+ f}{f}  \\ 
&\leq& -\K{VP_-V^+ f}{f}= -\K{P_-V^+ f}{V^+f} = \sum_{i\in I_-} |\K{f}{g_i}|^2. \qedhere
\end{eqnarray*}
\end{proof}

\section{Tight $J$-frames and Parseval $J$-frames}\label{AjustadosParsevals}

Recently, Hossein et al. defined tight and Parseval $J$-frames for Krein spaces \cite{HKP16}. Their definitions are given in terms of the $J$-frame bounds: given a $J$-frame $\F$ for $\HH$, they say that $\F$ is a $J$-tight frame for $\HH$ if there exists $\alpha_\pm>0$ such that
\[
\alpha_\pm (\pm\K{f}{f})=\sum_{i\in I_\pm}|\K{f}{f_i}|^2,
\]
for every $f\in \M_\pm$, where $\M_\pm$ are given by \eqref{emes}. Moreover, $\F$ is a $J$-Parseval frame for $\HH$ if in addition $\alpha_\pm=1$. 

Also, they associate to $\F$ the real number $\zeta\in [\sqrt{2},2)$ given by $\zeta=c_0(\M_+,\mc{C})+c_0(\M_-,\mc{C})$ where $\mc{C}=\{g\in \HH:\ \K{g}{g}=0\}$ is the cone of $J$-neutral vectors of $\HH$ and $c_0(\M_\pm,\mc{C})$ stands for the cosine of the angle between $\mc{C}$ and the uniformly $J$-definite subspace $\M_\pm$, see \cite[Def. 3.10]{GMMPM12}.


Observe that $\zeta=\sqrt{2}$ if and only if $c_0(\M_+,\mc{C})=c_0(\M_-,\mc{C})=\tfrac{1}{\sqrt{2}}$. Following the same arguments used in the proof of Theorem 3.6 in \cite{HKP16}, it is easy to see that $c_0(\M_\pm,\mc{C})=\tfrac{1}{\sqrt{2}}$ is equivalent to $\M_\pm\subseteq \HH_\pm$, where $\HH=\HH_+\sdo \HH_-$ is the fundamental decomposition that determines the definite inner-product in $\HH$. Moreover, since $\M_\pm$ is a maximal uniformly $J$-definite subspace, it turns out that $c_0(\M_\pm,\mc{C})=\tfrac{1}{\sqrt{2}}$ if and only if $\M_\pm= \HH_\pm$. In particular, $\zeta=\sqrt{2}$ implies the $J$-orthogonality of the subspaces, i.e.  $\M_+\ort\M_-$. 

Thus, the main results in \cite{HKP16} (Theorems 3.6 and 3.10) assume implicitly (or explicitly) the $J$-orthogonality of the maximal uniformly $J$-definite subspaces.

\medskip

Along this section, new definitions for tight and Parseval $J$-frames are presented. These definitions depend on the associated $J$-frame operator. Then, it is shown that they imply the definitions given in \cite{HKP16} and also the $J$-orthogonality of the subspaces $\M_\pm$ defined in \eqref{emes}.

\begin{Def}
A $J$-frame $\mc{F}=\{f_i\}_{i\in I}$ for a Krein space $\HH$ is a \emph{tight $J$-frame} for $\HH$ if its $J$-frame operator coincides with $\alpha I$ for some $\alpha>0$, i.e. if
\[
TT^+=\alpha I, \ \ \ \alpha>0, 
\]
where $T:\ell_2(I)\ra\HH$ is the synthesis operator of $\mc{F}$.
\end{Def}

\begin{prop}\label{Ajustados}
Let $\mc{F}=\{f_i\}_{i\in I}$ be a $J$-frame for $\HH$. Then, $\mc{F}$ is a tight $J$-frame for $\HH$ if and only if $\mc{F}_\pm=\{f_i\}_{i\in I_\pm}$ is a tight frame for the Hilbert space $(\M_\pm,\pm\K{\, \,}{\,})$, the frame bounds of $\F_+$ and $\F_-$ coincide, and $\M_-=\M_+^{\ort}$.
\end{prop}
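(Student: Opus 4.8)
The plan is to reduce both sides of the equivalence to statements about the $J$-positive operators $S_\pm$ of \eqref{eses} and then exploit the identities $S_+=QS$, $S_-=-(I-Q)S$, $R(S_\pm)=\M_\pm$, $N(S_\pm)=\M_\mp^{\ort}$, together with the mapping property \eqref{mapping}. First I would set up the dictionary between tightness of $\F_\pm$ and these operators: since $R(S_+)=\M_+$, the operator $S_+$ sends $\M_+$ into itself, and $S_+|_{\M_+}$ is exactly the Hilbert-space frame operator of $\F_+$ viewed as a frame for $(\M_+,\K{\,\,}{\,})$; likewise $-S_-|_{\M_-}$ is the frame operator of $\F_-$ in $(\M_-,-\K{\,\,}{\,})$. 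Hence ``$\F_\pm$ is a tight frame with bound $\alpha_\pm$'' translates, respectively, into $S_+|_{\M_+}=\alpha_+I_{\M_+}$ and $-S_-|_{\M_-}=\alpha_-I_{\M_-}$.

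For the implication ``tight $J$-frame $\Rightarrow$ the three conditions'', I would start from $S=TT^+=\alpha I$ with $\alpha>0$. Applying \eqref{mapping} gives $\M_-=S(\M_+^{\ort})=\alpha\,\M_+^{\ort}=\M_+^{\ort}$, which is the geometric condition; moreover $Q^+=Q$, either by \eqref{eses con q} ($\alpha Q=QS=SQ^+=\alpha Q^+$) or because $\M_+$ is regular and $Q$ is then the unique $J$-selfadjoint projection with range $\M_+$ and nullspace $\M_+^{\ort}=\M_-$. Consequently $S_+=QS=\alpha Q$ and $S_-=-(I-Q)S=-\alpha(I-Q)$; restricting to $\M_+$ and $\M_-$ gives $S_+|_{\M_+}=\alpha I_{\M_+}$ and $-S_-|_{\M_-}=\alpha I_{\M_-}$, so $\F_\pm$ are both tight with the common bound $\alpha_\pm=\alpha$.

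For the converse, I would assume $\M_-=\M_+^{\ort}$ and that $\F_+,\F_-$ are tight with a common bound $\alpha:=\alpha_+=\alpha_-$. Regularity of $\M_+$ makes $\HH=\M_+\dotplus\M_-$ a ($J$-orthogonal) fundamental decomposition, so $Q^+=Q$, and $\M_+=\M_-^{\ort}=N(S_-)$, $\M_-=\M_+^{\ort}=N(S_+)$. Writing $f=f_++f_-$ with $f_\pm\in\M_\pm$, one computes $Sf_+=S_+f_+-S_-f_+=\alpha f_+-0$ and $Sf_-=S_+f_--S_-f_-=0-(-\alpha f_-)=\alpha f_-$, using the tightness of $\F_\pm$ and the vanishing of the ``other'' operator on each summand. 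Hence $S=\alpha I$ with $\alpha>0$, so $\F$ is a tight $J$-frame.

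The step I expect to require the most care is the coupling of the two pieces: $S=S_+-S_-$ collapses to a scalar operator only when $\M_+$ and $\M_-$ are mutually $J$-orthogonal, so that $S$ acts as $S_+$ on $\M_+$ and as $-S_-$ on $\M_-$ with no cross terms, and this already forces the two tightness constants to agree. In the forward direction this $J$-orthogonality is not free and must be squeezed out of $S=\alpha I$ via \eqref{mapping}; in the backward direction it is hypothesized, and the equality of the two frame bounds is precisely what upgrades ``block-scalar'' to ``scalar''. Everything else is routine bookkeeping with the relations already established in Subsection~\ref{Jpositive}, so I do not anticipate further obstacles.
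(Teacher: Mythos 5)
Your proof is correct and follows essentially the same route as the paper's, working throughout with the operators $S_\pm$, the identity $S=S_+-S_-$ and the projection $Q=P_{\M_+//\M_-}$. The only (harmless) difference is in the forward direction, where you obtain $\M_-=\M_+^{\ort}$ directly from the mapping property $S(\M_+^{\ort})=\M_-$ with $S=\alpha I$, whereas the paper deduces it from $S_+S_-=S_-S_+=0$ together with the maximality of $\M_+$; your shortcut is clean and, if anything, slightly more economical.
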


\begin{proof}
Given a $J$-frame $\mc{F}=\{f_i\}_{i\in I}$ for $\HH$, consider the $J$-positive operators $S_\pm=\pm T_\pm T_\pm^+$. 

If $\mc{F}=\{f_i\}_{i\in I}$ is a tight $J$-frame with frame bound $\alpha>0$ then $S_+ - S_-=S=\alpha I$. Therefore, $S_+=\alpha I + S_-$ and $S_+S_-=S_-S_+$. But, if $S_+S_-=S_-S_+$ then $R(S_+S_-)\subseteq\M_+\cap\M_-=\{0\}$. So,
\[
S_+ - S_-=\alpha I \ \ \ \text{and} \ \ \ S_+ S_-=S_-S_+=0.
\]
Note that $S_-S_+=0$ implies $\M_+=R(S_+)\subseteq N(S_-)=R(S_-)^{\ort}=\M_-^{\ort}$. Since both $\M_+$ and $\M_-^{\ort}$ are maximal uniformly $J$-definite subspaces of $\HH$, it holds that $\M_+=\M_-^{\ort}$. 

Finally, observe that $\mc{F}_+=\{f_i\}_{i\in I_+}$ is a tight frame for $(\M_+,\K{\, \,}{\,})$ because
\[
\sum_{i\in I_+}|\K{f}{f_i}|^2= \sum_{i\in I}|\K{f}{f_i}|^2=\K{S f}{f}=\alpha \K{f}{f},
\]
for every $f\in \M_+$. Analogously, $\mc{F}_-=\{f_i\}_{i\in I_-}$ is a tight frame for $(\M_-,-\K{\, \,}{\,})$ with frame bound $\alpha$.

Conversely, suppose that $\M_-=\M_+^{\ort}$. Then, the projection $Q$ given in \eqref{Q} is the $J$-selfadjoint projection onto $\M_+$.

Also, assume that $\mc{F}_\pm=\{f_i\}_{i\in I_\pm}$ is a tight frame for the Hilbert space $(\M_\pm,\pm\K{\, \,}{\,})$ and the frame bounds of $\F_+$ and $\F_-$ coincide, say both are equal to $\alpha>0$. Hence, $T_+T_+^+=\alpha Q$ and $T_-T_-^+=-\alpha (I-Q)$. Therefore, $TT^+=\alpha I$, i.e. $\mc{F}$ is a tight $J$-frame.
\end{proof}

\begin{Def}
A $J$-frame $\mc{F}=\{f_i\}_{i\in I}$ for a Krein space $\HH$ is a \emph{Parseval $J$-frame} for $\HH$ if its $J$-frame operator coincides with $I$, i.e. if
\[
TT^+=I,
\]
where $T:\ell_2(I)\ra\HH$ is the synthesis operator of $\mc{F}$.
\end{Def}

They are called Parseval $J$-frames because the indefinite reconstruction formula induced by these frames is a signed Parseval's identity, like the reconstruction formula given by an orthonormal $J$-basis in a Krein space \cite[Ch. 1, \S10]{AI89}: given a $J$-frame $\mc{F}=\{f_i\}_{i\in I}$ for $\HH$, $\F$ is a Parseval $J$-frame if and only if
\[
f=\sum_{i\in I} \s_i \K{f}{f_i} f_i, \ \ \ f\in \HH,
\]
where $\s_i=\sgn\K{f_i}{f_i}$.

The following characterization of Parseval $J$-frames is a particular case of Proposition \ref{Ajustados}.

\begin{prop}\label{Parsevalitos}
Let $\mc{F}=\{f_i\}_{i\in I}$ be a $J$-frame for $\HH$. Then, $\mc{F}$ is a Parseval $J$-frame for $\HH$ if and only if $\mc{F}_\pm=\{f_i\}_{i\in I_\pm}$ is a Parseval frame for the Hilbert space $(\M_\pm,\pm\K{\,}{\,})$ and $\M_-=\M_+^{\ort}$. 
\end{prop}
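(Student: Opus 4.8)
The plan is to obtain this statement as a direct specialization of Proposition \ref{Ajustados} to the constant $\alpha=1$. The key observation is purely definitional: a Parseval $J$-frame is exactly a tight $J$-frame whose frame bound equals $1$ (that is, $TT^+=I$), and a Parseval frame for a Hilbert space is exactly a tight frame whose frame bound equals $1$. So the only difference between the tight case and the Parseval case is that the constant is pinned down, and the ``coinciding frame bounds'' clause in Proposition \ref{Ajustados} becomes automatic once both bounds are forced to be $1$.

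First I would treat the forward implication. Assuming $\mc{F}$ is a Parseval $J$-frame, we have $TT^+=I=1\cdot I$, so $\mc{F}$ is in particular a tight $J$-frame with frame bound $\alpha=1$. Proposition \ref{Ajustados} then gives that $\mc{F}_\pm=\{f_i\}_{i\in I_\pm}$ is a tight frame for the Hilbert space $(\M_\pm,\pm\K{\,}{\,})$, that the frame bounds of $\mc{F}_+$ and $\mc{F}_-$ coincide, and that $\M_-=\M_+^{\ort}$. Since those common frame bounds must equal $\alpha=1$, the families $\mc{F}_\pm$ are in fact Parseval frames for $(\M_\pm,\pm\K{\,}{\,})$, which is the asserted conclusion together with $\M_-=\M_+^{\ort}$.

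Conversely, assume $\mc{F}_\pm$ is a Parseval frame for $(\M_\pm,\pm\K{\,}{\,})$ and $\M_-=\M_+^{\ort}$. Then $\mc{F}_+$ and $\mc{F}_-$ are tight frames for their respective Hilbert spaces with frame bound $1$; in particular these two bounds coincide. Thus the hypotheses of the converse part of Proposition \ref{Ajustados} are satisfied with $\alpha=1$, and we conclude $TT^+=\alpha I=I$, i.e.\ $\mc{F}$ is a Parseval $J$-frame for $\HH$. I do not expect any genuine obstacle in this argument; the only point requiring a moment's care is the bookkeeping of the constants, namely that imposing $\alpha=1$ on both sides makes the ``coinciding bounds'' condition vacuous. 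As an alternative, one could avoid citing Proposition \ref{Ajustados} altogether and argue directly that $S=I$ forces $S_+S_-=S_-S_+=0$, hence $\M_+=R(S_+)\subseteq N(S_-)=\M_-^{\ort}$ with equality by maximality, and then compute $\sum_{i\in I_\pm}|\K{f}{f_i}|^2=\K{Sf}{f}=\pm\K{f}{f}$ for $f\in\M_\pm$; but deducing it from Proposition \ref{Ajustados} is the shorter route.
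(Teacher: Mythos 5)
Your proof is correct and follows exactly the paper's route: the paper itself presents Proposition \ref{Parsevalitos} as the particular case $\alpha=1$ of Proposition \ref{Ajustados}, which is precisely the specialization you carry out (including the observation that the ``coinciding bounds'' clause becomes automatic). Nothing is missing.
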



Given Krein spaces $\HH$ and $\KK$, recall that a linear operator $U: \HH\ra \KK$ is a \emph{$J$-isometry} if 
\[
\K{Ux}{Uy}_\KK=\K{x}{y}_\HH \ \ \ \text{for every $x,y\in \HH$}.
\]
On the other hand, $U$ is a \emph{$J$-partial isometry} if $N(U)$ is a regular subspace of $\HH$ and $U$ preserves the indefinite inner product on the subspace $N(U)^{\ort}$, or equivalently,
\[
\K{Ux}{Uy}_\KK=\K{E x}{y}_\HH \ \ \ \text{for every $x,y\in \HH$},
\]
where $E$ is the $J$-selfadjoint projection onto $N(U)^{\ort}$. Furthermore, if $U$ is bounded, its $J$-adjoint $U^+$ is also a $J$-partial isometry.

\begin{cor}\label{sintesis Parseval}
Let $\mc{F}=\{f_i\}_{i\in I}$ be a $J$-frame for $\HH$ with synthesis operator $T$. Then, the following conditions are equivalent:
\begin{enumerate}
	\item $\mc{F}$ is a Parseval J-frame;
	\item $T$ is a $J$-coisometry, i.e. $T^+$ is a $J$-isometry;
	\item $T^+T$ is the $J$-selfadjoint projection onto $N(T)^{\ort}$.
\end{enumerate}
\end{cor}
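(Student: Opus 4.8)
The plan is to establish the chain $(1)\Leftrightarrow(2)$ and $(1)\Leftrightarrow(3)$, in each case by a short computation with the $J$-adjoint $T^{+}\colon\HH\to\ell_2(I)$ together with the nondegeneracy of the indefinite inner product on $\HH$. Recall that $(\ell_2(I),\K{\,}{\,}_{2})$ is a Krein space, so $T^{+}$ is well defined, and that $(T^{+})^{+}=T$.

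For $(1)\Leftrightarrow(2)$ I would compute, for $x,y\in\HH$,
\[
\K{T^{+}x}{T^{+}y}_{2}=\K{x}{(T^{+})^{+}T^{+}y}=\K{x}{TT^{+}y}.
\]
Hence $T^{+}$ is a $J$-isometry precisely when $\K{x}{(TT^{+}-I)y}=0$ for all $x,y\in\HH$; by nondegeneracy of $\K{\,}{\,}$ on $\HH$ this is equivalent to $TT^{+}=I$, i.e.\ to $\F$ being a Parseval $J$-frame. This gives $(1)\Leftrightarrow(2)$.

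For $(1)\Rightarrow(3)$: if $TT^{+}=I$ then $TT^{+}$ is (trivially) invertible, so Lemma \ref{desc l2+} applies and yields that $E=T^{+}(TT^{+})^{-1}T=T^{+}T$ is the $J$-selfadjoint projection onto $N(T)^{\ort}$. For $(3)\Rightarrow(1)$: writing $S=TT^{+}$ and using that $T^{+}T$ is a projection, hence $(T^{+}T)^{2}=T^{+}T$, one gets
\[
S^{3}=T(T^{+}T)^{2}T^{+}=T(T^{+}T)T^{+}=S^{2},
\]
and since $S$ is invertible this forces $S=I$, i.e.\ $\F$ is a Parseval $J$-frame.

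I expect no serious obstacle here: all the steps are routine once the earlier results are available. The only points requiring a little attention are the bookkeeping of the $J$-adjoint in the identity $\K{T^{+}x}{T^{+}y}_{2}=\K{x}{TT^{+}y}$ (keeping track that $T^{+}$ maps into the Krein space $(\ell_2(I),\K{\,}{\,}_{2})$ and that $(T^{+})^{+}=T$), and the use of nondegeneracy of $\K{\,}{\,}$ to pass from the bilinear identity to the operator equality $TT^{+}=I$.
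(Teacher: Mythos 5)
Your proof is correct; the paper states this corollary without any proof, and your argument is exactly the routine verification one would expect: the adjoint identity $\K{T^+x}{T^+y}_2=\K{x}{TT^+y}$ together with nondegeneracy of the indefinite inner product gives $(1)\Leftrightarrow(2)$, Lemma \ref{desc l2+} with $TT^+=I$ gives $(1)\Rightarrow(3)$, and the idempotency of $T^+T$ plus the invertibility of the $J$-frame operator $S=TT^+$ (via $S^3=S^2$) gives $(3)\Rightarrow(1)$. No gaps.
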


\subsection{The canonical Parseval $J$-frame associated to a $J$-frame}\label{DualCanon}

Given a frame $\mc{F}=\{f_i\}_{i\in I}$ for a Hilbert space $\HH$, there exists a canonical Parseval frame associated to $\mc{F}$. Indeed,  if $T:\ell_2(I)\ra \HH$ is synthesis operator of $\mc{F}$ then $\mc{P}=\{(TT^*)^{-1/2}f_i\}_{i\in I}$ turns out to be a Parseval frame. 

It is known that the canonical Parseval frame $\mc{P}$ 
is the Parseval frame which is closest to $\F$, with respect to the distance given by
\[
d(\F, \G)= \sum_{i\in I} \|f_i-g_i\|^2, 
\]
where $\mc{F}=\{f_i\}_{i\in I}$ and $\mc{G}=\{f_i\}_{i\in I}$ are frames for $\HH$, see \cite{Balan, CK07}.
Moreover, $\mc{F}$ and $\mc{P}$ are similar frames in the sense of Definition \ref{similarity}. 

The aim of these paragraphs is to show that, given a $J$-frame $\F$ for a Krein space $\HH$, although there are infinitely many Parseval $J$-frames which are similar to $\F$, it is possible to define a ``canonical'' Parseval $J$-frame associated to $\F$.

\medskip

Given a $J$-frame $\F=\{f_i\}_{i\in I}$ for a Krein space $\HH$, consider its $J$-frame operator $S$. 
In \cite[Thm. 5.1]{GLLMMT17} it was proved that the spectrum of $S$ is contained in the right half-plane and, if $\Gamma$ is a Jordan curve enclosing $\sigma(S)$ and contained in the right half-plane, 
\begin{equation}\label{sqrt}
 S^{1/2}:=\frac{1}{2\pi i}\int_\Gamma z^{1/2} (z-S)^{-1} dz,
\end{equation}
defines an invertible self-adjoint square-root for $S$. Moreover, it is the unique (bounded) operator $P$ acting on $\HH$ which satisfies $P^2=S$ and
\[
\sigma(P)\subseteq \big\{z\in\CC: \ z=re^{it}, \ r>0,\ t\in(-\tfrac{\pi}{4},\tfrac{\pi}{4})\ \big\},
\]
see \cite[Thm. 6.1]{GLLMMT17}.

\begin{prop}\label{desc Parseval}
Given a $J$-frame $\F=\{f_i\}_{i\in I}$ for $\HH$ with $J$-frame operator $S$, consider the subspaces $\M_\pm$ given by \eqref{emes}.  
If $S^{1/2}$ is  the self-adjoint square root of $S$ defined in \eqref{sqrt}, let  
\[
\mc{L}_\pm:= S^{1/2}(\M_\mp^{\ort}).
\] 
Then, $\mc{L}_+$ is maximal uniformly $J$-positive, $\mc{L}_-$ is maximal uniformly $J$-negative and
\begin{equation}
	\HH=\mc{L}_+ [\dotplus] \mc{L}_-\ ,
\end{equation}
is a fundamental decomposition of $\HH$.
\end{prop}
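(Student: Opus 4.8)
The plan is to exploit the interaction between $S$ (equivalently $S^{1/2}$) and the projection $Q=P_{\M_+//\M_-}$ established in Subsection \ref{Jpositive}, together with the characterization of $J$-frame operators in Theorem \ref{oldie}. First I would record that $\M_-^{\ort}$ is a maximal uniformly $J$-positive subspace and $\M_+^{\ort}$ is a maximal uniformly $J$-negative subspace of $\HH$ (this was shown right after Theorem \ref{oldie}), and that $\HH=\M_-^{\ort}[\dotplus]\M_+^{\ort}$ is \emph{not} in general a fundamental decomposition, but $\HH=\M_+^{\ort}\dotplus\M_-^{\ort}$ as a direct sum; indeed by \eqref{mapping} we have $S(\M_-^{\ort})=\M_+$ and $S(\M_+^{\ort})=\M_-$, so $\HH=\M_+\dotplus\M_-$ pulls back under the invertible $S$ to $\HH=\M_-^{\ort}\dotplus\M_+^{\ort}$. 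Applying the invertible operator $S^{-1/2}$ to this (or, equivalently, noting $S^{1/2}(\M_-^{\ort})\dotplus S^{1/2}(\M_+^{\ort})=S^{1/2}(\HH)=\HH$) gives $\HH=\mc{L}_+\dotplus\mc{L}_-$ as a direct sum of closed subspaces.

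The substance is to show $\mc{L}_\pm$ are uniformly $J$-definite (hence maximal, since they are images of maximal uniformly $J$-definite subspaces under the topological isomorphism $S^{1/2}$, and a subspace isomorphic to a maximal uniformly $J$-definite one that is itself uniformly $J$-definite must be maximal — alternatively one checks directly that $\mc{L}_+[\dotplus]\mc{L}_-=\HH$ forces maximality), and that the decomposition is $J$-\emph{orthogonal}. For definiteness of $\mc{L}_+$: for $h\in\M_-^{\ort}$, $h\neq 0$, we must estimate $\K{S^{1/2}h}{S^{1/2}h}$. Here I would use that $S^{1/2}$ is $J$-selfadjoint (it is the self-adjoint square root in the Krein sense from \cite[Thm.~6.1]{GLLMMT17}, so $S^{1/2}=(S^{1/2})^+$), whence $\K{S^{1/2}h}{S^{1/2}h}=\K{S h}{h}$. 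By Remark \ref{desig estr}, $\K{Sh}{h}>0$ for every $h\in\M_-^{\ort}\setminus\{0\}$; to upgrade this to uniform positivity, note $h\mapsto \K{Sh}{h}=\K{S_+h}{h}=\sum_{i\in I_+}|\K{h}{f_i}|^2$ restricted to $\M_-^{\ort}$, and since $S_+=QS$ maps $\M_-^{\ort}$ bijectively onto $\M_+$ (a uniformly $J$-positive subspace) while $\F_+$ is a frame for $(\M_+,\K{\,\,}{\,})$, the lower $J$-frame bound from \eqref{eq J frame bounds} combined with the boundedness of $S^{-1}$ yields $\K{S h}{h}\geq \gamma\|h\|^2$ for some $\gamma>0$. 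Pushing this through the isomorphism $S^{1/2}$ (again $\|S^{-1/2}\|<\infty$) gives $\K{k}{k}\geq \gamma\|S^{-1/2}\|^{-2}\|k\|^2$ for $k\in\mc{L}_+$, i.e. $\mc{L}_+$ is uniformly $J$-positive. The argument for $\mc{L}_-$ is symmetric, using the $\K{Sg}{g}<0$ part of Remark \ref{desig estr} and $\F_-$ being a frame for $(\M_-,-\K{\,\,}{\,})$.

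For $J$-orthogonality, take $h\in\M_-^{\ort}$ and $g\in\M_+^{\ort}$ and compute $\K{S^{1/2}h}{S^{1/2}g}=\K{Sh}{g}$ (using $S^{1/2}=(S^{1/2})^+$ and $(S^{1/2})^2=S$). Now $Sh\in S(\M_-^{\ort})=\M_+$ by \eqref{mapping}, and $g\in\M_+^{\ort}$, so $\K{Sh}{g}=0$. Hence $\mc{L}_+[\perp]\mc{L}_-$, and together with $\HH=\mc{L}_+\dotplus\mc{L}_-$ and the uniform definiteness just proved, this is precisely a fundamental decomposition $\HH=\mc{L}_+[\dotplus]\mc{L}_-$; in particular each $\mc{L}_\pm$ is a (maximal) uniformly $J$-definite subspace. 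The main obstacle I anticipate is the passage from the strict inequalities of Remark \ref{desig estr} to \emph{uniform} inequalities on $\mc{L}_\pm$: one must either invoke the $J$-frame bounds of $\F_\pm$ explicitly and track the norm of $S^{-1/2}$, or argue abstractly that a closed subspace which is the image of a maximal uniformly $J$-definite subspace under an invertible bounded $J$-selfadjoint operator, and on which $\K{\cdot}{\cdot}$ is (strictly) definite, is automatically uniformly definite — the cleanest route is the explicit frame-bound computation sketched above, since the needed inequalities are already available in \eqref{eq J frame bounds} and Proposition \ref{canonical dual}.
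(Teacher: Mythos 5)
Your core computations coincide with the paper's: both proofs rest on $\K{S^{1/2}u}{S^{1/2}v}=\K{Su}{v}$ (via $J$-selfadjointness of $S^{1/2}$), Remark \ref{desig estr} for strict definiteness, \eqref{mapping} for the $J$-orthogonality $\mc{L}_+\ort\mc{L}_-$, and surjectivity of $S^{1/2}$ for $\mc{L}_++\mc{L}_-=\HH$. Where you diverge is the last step. The paper does \emph{not} prove uniform definiteness directly: having shown that $\mc{L}_+$ is $J$-positive, $\mc{L}_-$ is $J$-negative, they are $J$-orthogonal and span $\HH$, it invokes \cite[Ch.\ 1, Prop.\ 5.6]{AI89}, which upgrades such a decomposition to a fundamental one and yields maximality and uniform definiteness simultaneously. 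This is exactly the ``abstract alternative'' you mention but set aside; it is cleaner and avoids the obstacle you correctly identify. Your preferred explicit route has a gap as sketched: the lower bound in \eqref{eq J frame bounds} is an inequality for $f\in\M_+$, whereas you need a uniform lower bound for $\K{Sh}{h}$ with $h\in\M_-^{\ort}$, and ``combined with the boundedness of $S^{-1}$'' does not bridge this. It is fixable without frame bounds at all: for $h\in\M_-^{\ort}$ one has $T_-^+h=0$, so $\K{Sh}{h}=\K{T^+h}{T^+h}_2=\|P_+T^+h\|^2=\|T^+h\|^2\geq c^2\|h\|^2$ with $c=\|S^{-1}T\|^{-1}$ (say), since $T$ is surjective and hence $T^+$ is bounded below. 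Also, your first justification of maximality (``isomorphic to a maximal uniformly definite subspace, hence maximal'') is not a valid inference on its own; rely instead on your second one, that $\HH=\mc{L}_+[\dotplus]\mc{L}_-$ with $\mc{L}_\pm$ uniformly definite forces maximality (again \cite[Ch.\ 1, Prop.\ 5.6]{AI89}). With those two repairs your argument is complete and is a self-contained variant of the paper's.
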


\begin{proof}
If $x\in \mc{L}_+$, there exists a (unique) $u\in \M_-^{\ort}$ such that $x=S^{1/2}u$. Then, by Remark \ref{desig estr}, if $x\neq 0$
\[
\K{x}{x}=\K{S^{1/2}u}{S^{1/2}u}=\K{Su}{u}> 0.
\]
Thus, $\mc{L}_+$ is a $J$-positive subspace. Analogously, $\mc{L}_-$ is a $J$-negative subspace and $\mc{L}_+\cap\mc{L}_-=\{0\}$.

Also, if $x=S^{1/2}u\in\mc{L}_+$ and $y=S^{1/2}v\in \mc{L}_-$, where $u\in \M_-^{\ort}$ and $v\in\M_+^{\ort}$, then
\[
\K{x}{y}=\K{S^{1/2}u}{S^{1/2}v}=\K{Su}{v}=0,
\]
because $Su\in S(\M_-^{\ort})=\M_+$ and $v\in\M_+^{\ort}$. Therefore, $\mc{L}_+ \ort \mc{L}_-$. Moreover, $\HH=\M_-^{\ort} \dotplus \M_+^{\ort}$ implies that
\[
\HH=S^{1/2}(\HH)= S^{1/2}(\M_-^{\ort} \dotplus \M_+^{\ort})\subseteq \mc{L}_+ \sdo \mc{L}_- \ .
\]
Then, according to \cite[Ch. 1, Prop. 5.6]{AI89}, $\mc{L}_+$ is a maximal uniformly $J$-positive subspace of $\HH$, $\mc{L}_-$ is a maximal uniformly $J$-negative subspace of $\HH$ and
\[
	\HH=\mc{L}_+ [\dotplus] \mc{L}_-\ . \qedhere
\]
\end{proof}

\begin{thm}\label{Parseval canonico}
Let $\mc{F}=\{f_i\}_{i\in I}$ be a $J$-frame for a Krein space $\HH$ with $J$-frame operator $S$. If $S^{1/2}$ is  the self-adjoint square root of $S$ defined in \eqref{sqrt}, then
\begin{equation}\label{Parseval}
\mc{P}=\{S^{-1/2}f_i\}_{i\in I}
\end{equation}
is a Parseval $J$-frame for $\HH$ which is similar to $\F$.
\end{thm}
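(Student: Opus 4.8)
The plan is to verify that the $J$-frame operator of $\mc{P}=\{S^{-1/2}f_i\}_{i\in I}$ is the identity, and that $\mc{P}$ is indeed a $J$-frame (so that the notion of $J$-frame operator applies). First I would observe that $S^{-1/2}$ is an invertible $J$-selfadjoint operator: since $S^{1/2}$ is the self-adjoint square root constructed in \eqref{sqrt}, it is $J$-selfadjoint (being a function of the $J$-selfadjoint operator $S$ via a contour integral, using $(z-S)^{+}=\bar z - S$ together with $\Gamma$ symmetric about the real axis), hence so is its inverse. If $V:\ell_2(I)\ra\HH$ denotes the synthesis operator of $\mc{P}$, then $V=S^{-1/2}T$ where $T$ is the synthesis operator of $\F$. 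Indeed $Ve_i=S^{-1/2}f_i$, and since $S^{-1/2}$ is bounded, $\mc{P}$ is again a Bessel family.

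Next I would check the sign condition and the $J$-frame property. For each $i$, $\K{S^{-1/2}f_i}{S^{-1/2}f_i}=\K{S^{-1}f_i}{f_i}$; since $S^{-1}$ has the same structure as $S$ (it is the $J$-frame operator of the canonical dual $\F'$ by Proposition \ref{canonical dual}), one expects $\sgn\K{S^{-1/2}f_i}{S^{-1/2}f_i}=\sgn\K{f_i}{f_i}=\s_i$; more directly, this follows once we know $\mc{P}$ is similar to $\F$ via $S^{-1/2}$ and invoke Proposition \ref{canonical dual}-type reasoning, or simply compute the $J$-frame operator and use that the index sets coincide. To see $\mc{P}$ is a $J$-frame, note $V_\pm=S^{-1/2}TP_\pm=S^{-1/2}T_\pm$, so $R(V_\pm)=S^{-1/2}(R(T_\pm))=S^{-1/2}(\M_\pm)$. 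Now $S^{-1/2}$ maps $\M_\mp^{\ort}$ onto $\mc{L}_\pm$ by Proposition \ref{desc Parseval} applied to $S^{-1}$ (whose square root is $S^{-1/2}$), but I actually want the image of $\M_\pm$ itself; here I would use that $S^{1/2}$ maps $\M_\mp^{\ort}$ onto $\mc{L}_\pm$ and that $S^{1/2}$ is a bijection, together with $\M_+=S(\M_-^{\ort})=S^{1/2}(S^{1/2}(\M_-^{\ort}))=S^{1/2}(\mc{L}_+)$, whence $S^{-1/2}(\M_+)=\mc{L}_+$ and similarly $S^{-1/2}(\M_-)=\mc{L}_-$. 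By Proposition \ref{desc Parseval}, $\mc{L}_+$ is maximal uniformly $J$-positive, $\mc{L}_-$ is maximal uniformly $J$-negative, and $\HH=\mc{L}_+[\dotplus]\mc{L}_-$. Thus $R(V_+)$ and $R(V_-)$ are as required and $\mc{P}$ is a $J$-frame.

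The computation of the $J$-frame operator is then routine: the $J$-adjoint of $V=S^{-1/2}T$ is $V^+=T^+S^{-1/2}$ (using that $S^{-1/2}$ is $J$-selfadjoint), so
\[
VV^+=S^{-1/2}TT^+S^{-1/2}=S^{-1/2}SS^{-1/2}=I,
\]
which is precisely the definition of a Parseval $J$-frame. Finally, $\mc{P}$ is similar to $\F$ in the sense of Definition \ref{similarity} because $S^{-1/2}\in L(\HH)$ is invertible and $S^{-1/2}f_i$ is the $i$-th element of $\mc{P}$ by construction.

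I expect the main obstacle to be the careful bookkeeping around $S^{1/2}$: one must confirm that $S^{1/2}$ (and hence $S^{-1/2}$) is $J$-selfadjoint — which relies on the holomorphic functional calculus respecting the $J$-adjoint, a consequence of $\sigma(S)$ lying in the right half-plane and $\Gamma$ being chosen symmetric with respect to the real axis — and that $S^{1/2}(\mc{L}_+)=\M_+$, i.e. that the square root genuinely ``splits'' the mapping $S(\M_-^{\ort})=\M_+$ through the intermediate subspace $\mc{L}_+$ from Proposition \ref{desc Parseval}. Once these structural facts are in hand, everything else is a short algebraic manipulation. An alternative, even shorter route avoids discussing $R(V_\pm)$ directly: verify $VV^+=I$ first, then note that since $V=S^{-1/2}T$ with $S^{-1/2}$ invertible and $J$-selfadjoint, $\mc{P}$ inherits the $J$-frame property from $\F$ via the similarity, and appeal to Theorem \ref{oldie} with $\mc{L}_+=S^{-1/2}(\M_-^{\ort})$ playing the role of the maximal uniformly $J$-positive subspace — but I find the direct range computation cleaner to present.
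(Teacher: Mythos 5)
Your proposal is correct and follows essentially the same route as the paper: identify the synthesis operator of $\mc{P}$ as $S^{-1/2}T$, check the sign condition via $\K{S^{-1/2}f_i}{S^{-1/2}f_i}=\K{Su_i}{u_i}$ with $u_i=S^{-1}f_i\in\M_\mp^{\ort}$, identify $R(V_\pm)=S^{-1/2}(\M_\pm)=\mc{L}_\pm$ and invoke Proposition \ref{desc Parseval}, and finally compute $VV^+=S^{-1/2}SS^{-1/2}=I$. The only place worth tightening is the sign check: rather than ``one expects,'' note that $S^{-1}f_i\in\M_-^{\ort}$ for $i\in I_+$ and apply Remark \ref{desig estr} (strict positivity of $\K{S\,\cdot}{\cdot}$ on $\M_-^{\ort}\setminus\{0\}$), which is exactly what the paper does.
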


\begin{proof}
If $T:\ell_2(I)\ra\HH$ is the synthesis operator associated to $\F$, it is easy to see that the synthesis operator associated to $\mc{P}$ is $S^{-1/2}T$. Hence, $S^{-1/2}$ establishes the similarity between $\F$ and $\mc{P}$.

Also, for each $i\in I_+$ there exists a unique $u_i\in \M_-^{\ort}$ such that $f_i=Su_i$ and, 
\begin{eqnarray*}
\K{S^{-1/2}f_i}{S^{-1/2}f_i} &=& \K{S^{-1/2}Su_i}{S^{-1/2}Su_i} \\ &=& \K{Su_i}{u_i}> 0,
\end{eqnarray*}
see Remark \ref{desig estr}. Analogously, for each $i\in I_-$ there exists a unique $v_i\in \M_-^{\ort}$ such that $f_i=Sv_i$ and 
\begin{eqnarray*}
\K{S^{-1/2}f_i}{S^{-1/2}f_i} &=& \K{S^{-1/2}Sv_i}{S^{-1/2}Sv_i} \\ &=& \K{Sv_i}{v_i}< 0.
\end{eqnarray*}
Hence, $\sgn\K{S^{-1/2}f_i}{S^{-1/2}f_i}=\sgn\K{f_i}{f_i}$ for every $i\in I$.

Then, $R(S^{-1/2}TP_\pm)=S^{-1/2}(R(TP_\pm))=S^{-1/2}(\M_\pm)=\mc{L}_\pm$ is maximal uniformly $J$-definite and $\mc{P}$ is a $J$-frame for $\HH$. Moreover, $\mc{P}$ is a Parseval $J$-frame because 
\begin{eqnarray*}
(S^{-1/2}T)(S^{-1/2}T)^+ &=& S^{-1/2}TT^+S^{-1/2} \\ &=& S^{-1/2}SS^{-1/2}=I. \ \ \ \qedhere
\end{eqnarray*}
\end{proof}

Given a $J$-frame $\F=\{f_i\}_{i\in I}$ for $\HH$, consider the subspaces $\M_\pm$ given by \eqref{emes}.  Theorem \ref{Parseval canonico} establishes that the $J$-positive vectors $\mc{P_+}=\{S^{-1/2}f_i\}_{i\in I_+}$ in the Parseval $J$-frame $\mc{P}$ generate the maximal uniformly $J$-positive subspace $\mc{L}_+$, and the $J$-negative vectors $\mc{P_-}=\{S^{-1/2}f_i\}_{i\in I_-}$ in $\mc{P}$ generate the maximal uniformly $J$-negative subspace $\mc{L}_-=\mc{L}_+^{\ort}$. 

If $U\in L(\HH)$ is a $J$-unitary operator mapping $\mc{L}_+$ onto the $J$-positive subspace $\HH_+$ of a fundamental decomposition $\HH=\HH_+\sdo \HH_-$, it is easy to see that $\mc{G}=\{US^{-1/2}f_i\}_{i\in I}$ is also a Parseval $J$-frame for $\HH$. Moreover, the $J$-positive vectors $\mc{G_+}=\{US^{-1/2}f_i\}_{i\in I_+}$ in  $\mc{G}$ generate the maximal uniformly $J$-positive subspace $U(\mc{L}_+)=\HH_+$, and the $J$-negative vectors $\mc{G_-}=\{US^{-1/2}f_i\}_{i\in I_-}$ in $\mc{G}$ generate the maximal uniformly $J$-negative subspace $U(\mc{L}_-)=\HH_-$. Therefore,

\begin{prop}
Let $\mc{F}=\{f_i\}_{i\in I}$ be a $J$-frame for a Krein space $\HH$ and consider a fixed fundamental decomposition $\HH=\HH_+\sdo \HH_-$. Then, there exists a Parseval $J$-frame $\mc{Q}=\{q_i\}_{i\in I}$ for $\HH$ which is similar to $\F$ and satisfies
\[
\HH_+=\ol{\Span\{q_i: \K{q_i}{q_i}>0\}}. 
\]
\end{prop}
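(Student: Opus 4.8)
The plan is to build the desired Parseval $J$-frame $\mc{Q}$ directly from the canonical Parseval $J$-frame $\mc{P}=\{S^{-1/2}f_i\}_{i\in I}$ produced in Theorem \ref{Parseval canonico}, by composing with a suitable $J$-unitary operator that rotates the maximal uniformly $J$-positive subspace $\mc{L}_+$ onto $\HH_+$. The whole argument is essentially the paragraph immediately preceding the statement, so the proof is short.

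First I would invoke Theorem \ref{Parseval canonico} to obtain $\mc{P}=\{S^{-1/2}f_i\}_{i\in I}$, a Parseval $J$-frame for $\HH$ similar to $\F$ (via $S^{-1/2}$). By Proposition \ref{desc Parseval} (or by the analysis following Theorem \ref{Parseval canonico}), the $J$-positive vectors of $\mc{P}$ span $\mc{L}_+:=S^{1/2}(\M_-^{\ort})$, which is maximal uniformly $J$-positive, and the $J$-negative ones span $\mc{L}_-=\mc{L}_+^{\ort}$, maximal uniformly $J$-negative, with $\HH=\mc{L}_+[\dotplus]\mc{L}_-$ a fundamental decomposition. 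Now fix the target fundamental decomposition $\HH=\HH_+\sdo\HH_-$. Both $(\mc{L}_+,\K{\,\,}{\,})$ and $(\HH_+,\K{\,\,}{\,})$ are Hilbert spaces of the same Hilbert-space dimension (they are both complementary to maximal uniformly $J$-negative subspaces, hence isomorphic to $\HH$ modulo the negative part; more precisely, the angular operator description shows $\dim\mc{L}_+=\dim\HH_+$ and $\dim\mc{L}_-=\dim\HH_-$), so there is a unitary $U_+:(\mc{L}_+,\K{\,\,}{\,})\to(\HH_+,\K{\,\,}{\,})$ and a unitary $U_-:(\mc{L}_-,-\K{\,\,}{\,})\to(\HH_-,-\K{\,\,}{\,})$; set $U:=U_+P_{\mc{L}_+//\mc{L}_-}+U_-P_{\mc{L}_-//\mc{L}_+}\in L(\HH)$. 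A direct check using $\mc{L}_+\ort\mc{L}_-$ and $\HH_+\ort\HH_-$ shows $U$ is $J$-unitary, i.e. $U^+U=UU^+=I$.

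Next I would set $\mc{Q}=\{q_i\}_{i\in I}$ with $q_i:=US^{-1/2}f_i$. Its synthesis operator is $US^{-1/2}T$ where $T$ is the synthesis operator of $\F$; since $US^{-1/2}$ is invertible, $\mc{Q}$ is similar to $\F$. Since $U$ is $J$-unitary, $\sgn\K{q_i}{q_i}=\sgn\K{S^{-1/2}f_i}{S^{-1/2}f_i}=\sgn\K{f_i}{f_i}=\s_i$, and $U$ maps $\mc{L}_\pm$ onto $\HH_\pm$, so $R((US^{-1/2}T)P_\pm)=U(\mc{L}_\pm)=\HH_\pm$ is maximal uniformly $J$-definite; hence $\mc{Q}$ is a $J$-frame. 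Finally $(US^{-1/2}T)(US^{-1/2}T)^+=US^{-1/2}TT^+S^{-1/2}U^+=US^{-1/2}SS^{-1/2}U^+=UU^+=I$, so $\mc{Q}$ is a Parseval $J$-frame, and by construction $\ol{\Span\{q_i:\K{q_i}{q_i}>0\}}=\HH_+$.

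The only genuine point requiring care is the existence of the $J$-unitary $U$ mapping the fundamental decomposition $\mc{L}_+[\dotplus]\mc{L}_-$ onto $\HH_+\sdo\HH_-$; this is the main (mild) obstacle, and it rests on the standard fact that any two fundamental decompositions of a Krein space have unitarily equivalent positive (and negative) parts — equivalently, the $J$-positive parts of any two fundamental symmetries have the same Hilbert-space dimension — so that $U_\pm$ as above exist; see \cite[Ch.~1]{AI89}. Everything else is a bookkeeping computation with the already-established identity $TT^+=I$ for the canonical Parseval $J$-frame and the elementary properties of $J$-unitary operators.
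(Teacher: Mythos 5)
Your proposal is correct and follows essentially the same route as the paper: the paper's argument is precisely the paragraph preceding the statement, namely applying a $J$-unitary $U$ with $U(\mc{L}_\pm)=\HH_\pm$ to the canonical Parseval $J$-frame $\{S^{-1/2}f_i\}_{i\in I}$ of Theorem \ref{Parseval canonico}. The only addition you make is to justify the existence of such a $U$ (via the dimension invariance of the positive and negative parts of fundamental decompositions), a point the paper takes for granted; your justification is sound.
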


\subsection{Every Parseval $J$-frame is the projection of a $J$-orthonormal basis}\label{Naimark}

A well known result in frame theory for Hilbert spaces is Naimark's Theorem (see e.g. \cite{HanLarson} or \cite[Thm. 1.9]{CKP}). It states that every Parseval frame is the projection of an orthonormal basis. More precisely, $\F=\{f_i\}_{i\in I}$ is a Parseval frame for the Hilbert space $\HH$ if and only if there exist a Hilbert space $\KK$ and an orthonormal basis $\{b_i\}_{i\in I}$ of $\KK$ such that $\HH$ is a (closed) subspace of $\KK$ and, if $P\in L(\KK)$ denotes the orthogonal projection onto $\HH$,
\[
f_i =Pb_i, \ \ \ \text{for every $i\in I$}.
\]
It also admits the following alternative statement:
\begin{thm*}
Let $\F=\{f_i\}_{i\in I}$ be a frame for the Hilbert space $\HH$ with synthesis operator $T: \ell_2(I)\ra \HH$. 
If $\{e_i\}_{i\in I}$ denotes the standard basis of $\ell_2(I)$, the following conditions are equivalent:
\begin{enumerate}
	\item $\F=\{f_i\}_{i\in I}$ is a Parseval frame for $\HH$;
	\item $T^*f_i=P_{N(T)^\bot}e_i$ for every $i\in I$.
\end{enumerate}
\end{thm*}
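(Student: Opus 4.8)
The plan is to prove the equivalence $(1)\Leftrightarrow(2)$ for the Hilbert-space Naimark theorem by working with the synthesis operator $T$ and its adjoint $T^*$, exploiting the fact that $T^*e_i^{\HH}=f_i$ in a dual sense and that the frame operator is $S=TT^*$. The key identity to keep in mind is that for any $f\in\HH$, $T^*f=\sum_{i\in I}\PI{f}{f_i}e_i$, so $T^*f_i=\sum_{j\in I}\PI{f_i}{f_j}e_j$, while $P_{N(T)^\bot}e_i=T^*(TT^*)^{-1}Te_i=T^*S^{-1}f_i$ (using that $P_{N(T)^\bot}=T^*(TT^*)^{-1}T$, the analogue of Lemma~\ref{desc l2+}, valid since $T$ is surjective onto $\HH$).

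First I would establish the chain: $\F$ is Parseval $\iff S=TT^*=I \iff S^{-1}=I$. Then for the direction $(1)\Rightarrow(2)$, assuming $S=I$, I compute $P_{N(T)^\bot}e_i=T^*S^{-1}f_i=T^*f_i$, which is exactly $(2)$. For the converse $(2)\Rightarrow(1)$, suppose $T^*f_i=P_{N(T)^\bot}e_i$ for all $i$. Apply $T$ to both sides: $TT^*f_i=TP_{N(T)^\bot}e_i=Te_i=f_i$ (since $P_{N(T)^\bot}$ acts as the identity modulo $N(T)$ and $T$ kills $N(T)$). Thus $Sf_i=f_i$ for every $i$, i.e. $S$ fixes every frame vector. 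Since $\{f_i\}_{i\in I}$ spans a dense subspace of $\HH$ and $S$ is bounded, $S=I$, so $\F$ is Parseval.

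The main obstacle — really the only subtle point — is justifying that $TP_{N(T)^\bot}=T$, equivalently $TP_{N(T)}=0$; this is immediate because $P_{N(T)}$ has range $N(T)$. A secondary care-point is the converse step's density argument: I would note $\ol{\Span\{f_i\}_{i\in I}}=R(T)=\HH$ because a frame has surjective synthesis operator, so $Sf=f$ on a dense set forces $S=I$ by continuity. I would also remark that the two formulations of Naimark's theorem match up by identifying $\KK$ with $\ell_2(I)$, $\{b_i\}$ with the standard basis $\{e_i\}$, $\HH$ with $N(T)^\bot\subseteq\ell_2(I)$ via the isometry $T^*$ (which is isometric precisely when $S=I$), and $P$ with $P_{N(T)^\bot}$; under this identification $f_i=Pb_i$ reads $T^*f_i=P_{N(T)^\bot}e_i$. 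No deep machinery is needed beyond the elementary operator identities already in the paper; the proof is a few lines.

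\begin{proof}
Since $\F$ is a frame, its synthesis operator $T$ is surjective, so $S:=TT^*$ is invertible and $P_{N(T)^\bot}=T^*S^{-1}T$ is the orthogonal projection onto $N(T)^\bot$. Recall also that $T^*f=\sum_{i\in I}\PI{f}{f_i}e_i$, hence $Te_i=f_i$.

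$(1)\Rightarrow(2)$: If $\F$ is Parseval then $S=I$, so $P_{N(T)^\bot}e_i=T^*S^{-1}Te_i=T^*Te_i=T^*f_i$ for every $i\in I$.

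$(2)\Rightarrow(1)$: Assume $T^*f_i=P_{N(T)^\bot}e_i$ for every $i\in I$. Applying $T$ and using $TP_{N(T)^\bot}=T$ (because $R(P_{N(T)})=N(T)$), we get
\[
Sf_i=TT^*f_i=TP_{N(T)^\bot}e_i=Te_i=f_i,\qquad i\in I.
\]
Thus $S$ fixes each $f_i$. Since $\ol{\Span\{f_i\}_{i\in I}}=R(T)=\HH$ and $S$ is bounded, it follows that $S=I$, i.e. $\F$ is a Parseval frame.
\end{proof}
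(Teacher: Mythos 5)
Your proof is correct. Note that the paper itself gives no proof of this statement: it is presented as a known ``alternative statement'' of Naimark's theorem, with the reader referred to \cite{HanLarson} and \cite[Thm.~1.9]{CKP}. Your argument is the standard one and fills that gap cleanly: the identity $P_{N(T)^\bot}=T^*S^{-1}T$ (the Hilbert-space analogue of Lemma~\ref{desc l2+}, valid because $T$ is surjective and hence $S=TT^*$ is invertible and $R(T^*)=N(T)^\bot$ is closed) makes $(1)\Rightarrow(2)$ a one-line computation, and the converse follows from $TP_{N(T)^\bot}=T$ together with the density of $\Span\{f_i\}_{i\in I}$ in $\HH=R(T)$ and the boundedness of $S$. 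The only cosmetic quibble is the ``hence'' in ``$T^*f=\sum_{i\in I}\PI{f}{f_i}e_i$, hence $Te_i=f_i$'': the second identity is just the definition \eqref{sintesis} of the synthesis operator rather than a consequence of the formula for $T^*$. This does not affect the validity of the argument.
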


The following is a natural generalization of the above result to Parseval $J$-frames.

\begin{thm}\label{caso P}
Let $\F=\{f_i\}_{i\in I}$ be a frame for a Krein space $\HH$. 
Then, the following conditions are equivalent: 
\begin{enumerate}
	\item $\F$ is a Parseval $J$-frame for $\HH$;
	\item there exist a Krein space $\KK=\KK_+ \sdo \KK_-$, containing $\HH$ as a regular subspace, and a $\K{\,}{\,}_\KK$-orthonormal basis $\{b_i\}_{i\in I}$ of $\KK$, such that $\{f_i\}_{i\in I_\pm} \subseteq \KK_\pm$ and 
\begin{equation}\label{proy}
f_i =E b_i, \ \ \ \text{for every $i\in I$},
\end{equation}
where  $E\in L(\KK)$ is the $J$-selfadjoint projection onto $\HH$.
\end{enumerate}
\end{thm}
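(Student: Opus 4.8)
The plan is to prove $(2)\Rightarrow(1)$ directly and $(1)\Rightarrow(2)$ by constructing the ambient Krein space $\KK$ out of the coefficient space $\ell_2(I)$, mimicking the classical Naimark argument but keeping careful track of signs. For $(2)\Rightarrow(1)$: assume $\KK=\KK_+\sdo\KK_-$ is a Krein space containing $\HH$ as a regular subspace with $J$-selfadjoint projection $E$ onto $\HH$, and $\{b_i\}_{i\in I}$ is a $\K{\,}{\,}_\KK$-orthonormal basis with $b_i\in\KK_\pm$ for $i\in I_\pm$ and $f_i=Eb_i$. Then $\K{f_i}{f_i}=\K{Eb_i}{b_i}_\KK$; since $E$ is a $J$-selfadjoint projection and $b_i$ is $J$-definite with $\sgn\K{b_i}{b_i}_\KK=\s_i$, one checks $f_i$ is non-neutral with the same sign $\s_i$ (here one uses that $E$ restricted to a regular decomposition behaves well, together with $f_i\ne 0$). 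For any $f\in\HH\subseteq\KK$, expand $f=\sum_i \s_i\K{f}{b_i}_\KK b_i$ using $J$-orthonormality of $\{b_i\}$ in $\KK$ (the signed Parseval identity for a $J$-orthonormal basis, see \cite[Ch.~1,~\S10]{AI89}); applying $E$ and using $Ef=f$, $Eb_i=f_i$, and $\K{f}{b_i}_\KK=\K{Ef}{b_i}_\KK=\K{f}{Eb_i}_\KK=\K{f}{f_i}$ gives $f=\sum_i\s_i\K{f}{f_i}f_i$, i.e. $TT^+=I$, so $\F$ is a Parseval $J$-frame.

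For $(1)\Rightarrow(2)$: suppose $\F$ is a Parseval $J$-frame with synthesis operator $T:\ell_2(I)\ra\HH$, so $TT^+=I$. Set $\KK:=\ell_2(I)$ equipped with the indefinite inner product $\Skindef_2$ from \eqref{l2}, a Krein space with fundamental decomposition $\KK_\pm:=\ell_2(I_\pm)$ and standard $J$-orthonormal basis $b_i:=e_i$ (so $b_i\in\KK_\pm$ exactly for $i\in I_\pm$, as required). By Corollary \ref{sintesis Parseval}, $T^+$ is a $J$-isometry and $E:=T^+T$ is the $J$-selfadjoint projection onto $N(T)^{\ort}$. The idea is to identify $\HH$ with the regular subspace $N(T)^{\ort}$ of $\KK$ via the $J$-isometry $T^+$: since $T^+$ is an isometric bijection of $\HH$ onto $R(T^+)=N(T)^{\ort}$, it is a Krein-space isomorphism onto that regular subspace, and under this identification the $J$-selfadjoint projection of $\KK$ onto $\HH\cong N(T)^{\ort}$ is precisely $E=T^+T$. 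Finally $Eb_i=T^+Te_i=T^+f_i=\sum_{j}\s_j\K{f_i}{f_j}e_j$, which under the identification $T^+\leftrightarrow$ inclusion corresponds to $f_i$ itself (this is exactly the condition $T^+f_i=P_{N(T)^{\ort}}e_i$ in the signed sense, i.e. $Eb_i$ pulled back is $f_i$); thus \eqref{proy} holds.

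The main obstacle will be the identification step in $(1)\Rightarrow(2)$: one must be careful that ``$\HH$ is a regular subspace of $\KK$'' is to be read up to the Krein-space isomorphism $T^+:\HH\to N(T)^{\ort}\subseteq\ell_2(I)$, and that under this isomorphism the abstract $J$-selfadjoint projection $E_\KK$ of $\KK$ onto the copy of $\HH$ agrees with the operator $T^+T$ — this uses uniqueness of the $J$-selfadjoint projection onto a regular subspace \cite[Ch.1,~Thm.~7.16]{AI89} together with $R(T^+T)=R(T^+)=N(T)^{\ort}$ and $J$-selfadjointness of $T^+T$. A secondary point to get right is the sign bookkeeping: verifying in both directions that $f_i$ is non-neutral with $\sgn\K{f_i}{f_i}=\s_i$, and that the restriction of the basis vectors to $\KK_\pm$ matches the index partition $I_\pm$; in $(1)\Rightarrow(2)$ this is immediate from the construction, while in $(2)\Rightarrow(1)$ it follows because $E$ maps the regular decomposition compatibly and $f_i=Eb_i\ne 0$ forces the sign to be inherited from $b_i$.
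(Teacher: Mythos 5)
Your $(1)\Rightarrow(2)$ direction is correct and takes a genuinely different route from the paper. The paper first invokes Proposition \ref{Parsevalitos} to get the fundamental decomposition $\HH=\M_+\sdo\M_-$, applies the classical Hilbert-space Naimark theorem separately to the Parseval frames $\F_\pm$ of $(\M_\pm,\pm\K{\,\,}{\,})$, and glues the two dilations into $\KK=\KK_+\oplus\KK_-$ with the symmetry $J'$. You instead take $\KK=\ell_2(I)$ with the inner product \eqref{l2} and embed $\HH$ as the regular subspace $N(T)^{\ort}$ via the $J$-isometry $T^+$, with $b_i=e_i$ and $E=T^+T$. Your construction is more economical and makes the corollary following Theorem \ref{caso P} (the reformulation $T^+f_i=(I-F)e_i$) essentially tautological, at the cost of reading ``$\HH\subseteq\KK$'' up to a Krein-space isomorphism; the paper's construction keeps $\HH$ literally inside $\KK$. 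One point you pass over too quickly: condition (2) requires $\{f_i\}_{i\in I_\pm}\subseteq\KK_\pm$, which under your identification means $T^+f_i=Ee_i\in\ell_2(I_\pm)$ for $i\in I_\pm$. This is a statement about the \emph{projected} vectors, not about the basis vectors $e_i$ (for which membership in $\KK_\pm$ is trivial), and it requires the commutation $EP_\pm=P_\pm E$ of Lemma \ref{desc l2+} (equivalently, the decomposition of $N(T)$ in Lemma \ref{Jframe descompone el nucleo}).

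The genuine gap is in $(2)\Rightarrow(1)$. Your argument yields the signed reconstruction formula $f=\sum_{i\in I}\s_i\K{f}{f_i}f_i$, i.e.\ $TT^+=I$, and you conclude at once that $\F$ is a Parseval $J$-frame. But a Parseval $J$-frame is by definition a \emph{$J$-frame} whose $J$-frame operator is the identity, and the hypothesis of the theorem only makes $\F$ a frame; the remark after the definition equates the reconstruction formula with Parsevalness only for families already known to be $J$-frames. The identity $TT^+=I$ by itself does not show that $R(T_+)$ is maximal uniformly $J$-positive and $R(T_-)$ maximal uniformly $J$-negative. To close this you must use the hypothesis $\{f_i\}_{i\in I_\pm}\subseteq\KK_\pm$ structurally, not just for sign bookkeeping: it gives $R(T_\pm)\subseteq\KK_\pm\cap\HH$, hence $R(T_\pm)$ is uniformly $J$-definite and the two ranges are $J$-orthogonal, so $\HH=R(T)=R(T_+)\sdo R(T_-)$ is a fundamental decomposition of $\HH$ and maximality follows; this is precisely how the paper opens its converse. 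A secondary imprecision: you assume $b_i\in\KK_\pm$ for $i\in I_\pm$, which is not what (2) asserts (it constrains the $f_i$, not the $b_i$), so before reading your final identity as $TT^+=I$ you must reconcile the signs $\sgn\K{b_i}{b_i}_\KK$ appearing in the basis expansion of $f$ with the signs $\s_i=\sgn\K{f_i}{f_i}$ used in the definition of $T^+$.
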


\begin{proof}
By Proposition \ref{Parsevalitos}, if $\F=\{f_i\}_{i\in I}$ is a Parseval $J$-frame for $\HH$ with synthesis operator $T:\ell_2(I)\ra \HH$, then $\M_+$ and $\M_-$ are $J$-orthogonal maximal uniformly $J$-definite subspaces of $\HH$, i.e. $\HH=\M_+\sdo \M_-$ is a fundamental decomposition of $\HH$.

Also, $\F_+=\{f_i\}_{i\in I_+}$ is a Parseval frame for the Hilbert space $(\M_+, \K{\, \,}{\,})$. Then, by Naimark's Theorem, there exist a Hilbert space $\KK_+$ and an orthonormal basis $\{b_i\}_{i\in I_+}$ of $\KK_+$ such that $\M_+$ is a (closed) subspace of $\KK_+$ and, if $E_+\in L(\KK_+)$ denotes the orthogonal projection onto $\M_+$,
\[
f_i =E_+b_i, \ \ \ \text{for every $i\in I_+$}.
\]
Analogously, there exist a Hilbert space $\KK_-$ and an orthonormal basis $\{b_i\}_{i\in I_-}$ of $\KK_-$ such that $\M_-$ is a (closed) subspace of $\KK_-$ and, if $E_-\in L(\KK_-)$ denotes the orthogonal projection onto $\M_-$,
\[
f_i =E_-b_i, \ \ \ \text{for every $i\in I_-$},
\]
because $\F_-=\{f_i\}_{i\in I_-}$ is a Parseval frame for the Hilbert space $(\M_-, -\K{\, \,}{\,})$.	

Now, consider the Hilbert space $\KK=\KK_+ \oplus\KK_-$ and the (fundamental) symmetry $J':\KK\ra\KK$ given by
\[
J'=2 P_{\KK_+//\KK_-} - I=\matriz{I}{0}{0}{-I}.
\]
If $\K{\, \,}{\,}_\KK$ is the indefinite inner-product on $\KK$ induced by $J'$, then $(\KK,\K{\, \,}{\,}_\KK)$ is a Krein space.

Observe that $\{f_i\}_{i\in I_\pm} \subseteq \KK_\pm$ and $\HH$ is a regular subspace of $\KK$ because $\M_\pm$ is a (closed) subspace of $\KK_\pm$ and $\KK_+ [\bot]\, \KK_-$. Furthermore, the $J'$-selfadjoint projection onto $\HH$ is given by $E=\matriz{E_+}{0}{0}{E_-}$.

Finally, note that $\mc{B}=\{b_i\}_{i\in I}=\{b_i\}_{i\in I_+}\cup \{b_i\}_{i\in I_-}$ is a $J'$-orthonormal basis of $\KK$ and
\[
f_i=E b_i, \ \ \ \text{for every $i\in I$}.
\]

Conversely, assume that there exist a Krein space $\KK=\KK_+ \sdo \KK_-$ such that $\{f_i\}_{i\in I_\pm} \subseteq \KK_\pm$, and a $\K{\,}{\,}_\KK$-orthonormal basis $\{b_i\}_{i\in I}$ of $\KK$ satisfying \eqref{proy}.

Then, $\HH=R(T_+)\sdo R(T_-)$ because $R(T_\pm)\subseteq \KK_\pm$. It is also easy to see that $R(T_\pm)$ is a (closed) uniformly $J$-definite subspace of $\HH$, hence it is a fundamental decomposition of $\HH$. Thus, $\F$ is a $J$-frame for $\HH$.

Furthermore, if $E_\pm$ denotes the $J'$-selfadjoint projection onto $R(T_\pm)$, then $E=E_+ + E_-$ (cf. \cite[Prop. 4]{Hassi}, \cite[Thm. 2.3]{Ando09}). 

It remains to show that $\F_\pm$ is a Parseval frame for $(R(T_\pm), \pm\K{\, \,}{\,})$. If $f\in R(T_+)$ then $f=\sum_{i\in I}\s_i \K{f}{b_i}b_i=\sum_{i\in I_+}\K{f}{b_i}b_i$ and
\begin{eqnarray*}
\K{f}{f}  = \sum_{i\in I_+}|\K{f}{b_i}|^2 = \sum_{i\in I_+}|\K{Ef}{b_i}|^2 = \sum_{i\in I_+}|\K{f}{f_i}|^2.
\end{eqnarray*}
Thus, $\F_+=\{f_i\}_{i\in I_+}$ is a Parseval frame for $(R(T_+), \K{\, \,}{\,})$. Analogously, it follows that $\F_-=\{f_i\}_{i\in I_-}$ is a Parseval frame for $(R(T_-), -\K{\, \,}{\,})$. Then, by Proposition \ref{Parsevalitos}, $\F$ is a Parseval $J$-frame for $\HH$.
\end{proof}

In the proof of Theorem \ref{caso P}, the Krein space $\KK$ is constructed in such a way that $\KK_\pm$ is isometrically isomorphic (as Hilbert spaces) to $\ell_2(I_\pm)$ and $\KK$ is isometrically isomorphic (as Krein spaces) to $\ell_2(I)$ (endowed with the fundamental symmetry $J_2$). Therefore, it can be reinterpreted in the following way:

\begin{cor}
Let $\F=\{f_i\}_{i\in I}$ be a frame for a Krein space $\HH$ with synthesis operator $T$.  
Then, the following conditions are equivalent: 
\begin{enumerate}
	\item $\F$ is a Parseval $J$-frame for $\HH$;
	\item $N(T)= N(T)\cap \ell_2(I_+)\, \sdo\, N(T)\cap\ell_2(I_-)$ 
	and
	\[
	T^+ f_i= (I-F) e_i, \ \ \ i\in I,
	\]
	where $F:\ell_2(I)\ra \ell_2(I)$ is the $J$-selfadjoint projection onto $N(T)$.
\end{enumerate}
\end{cor}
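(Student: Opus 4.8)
The statement to prove is the Corollary following Theorem \ref{caso P}, which reinterprets the Naimark-type theorem in terms of the coefficient space $\ell_2(I)$ rather than an abstract Krein superspace $\KK$. The plan is to deduce this corollary from Theorem \ref{caso P} together with Corollary \ref{sintesis Parseval}, which already tells us that $\F$ is a Parseval $J$-frame if and only if $T^+T$ is the $J$-selfadjoint projection onto $N(T)^{\ort}$ — so in the notation of the statement, $T^+T = I - F$, where $F$ is the $J$-selfadjoint projection onto $N(T)$. Evaluating $T^+T$ on the standard basis gives $T^+Te_i = T^+f_i$, which is exactly the displayed formula $T^+f_i = (I-F)e_i$. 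Thus the second condition of the corollary is essentially a restatement of condition (3) of Corollary \ref{sintesis Parseval} once we know $N(T)$ is regular.

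First I would handle the implication $(1)\Rightarrow(2)$. Assuming $\F$ is a Parseval $J$-frame, Proposition \ref{Parsevalitos} gives $\M_-=\M_+^{\ort}$, so $\M_+ \sdo \M_-$ is a fundamental decomposition of $\HH$; in particular $\M_\pm = R(T_\pm)$ are regular. Applying Lemma \ref{Jframe descompone el nucleo} (valid since a Parseval $J$-frame is a $J$-frame) yields the decomposition $N(T)=N(T)\cap\ell_2(I_+)\,\sdo\,N(T)\cap\ell_2(I_-)$, which is the first half of (2). For the second half, invoke Corollary \ref{sintesis Parseval}: $T^+T$ is the $J$-selfadjoint projection onto $N(T)^{\ort}$. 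Since $N(T)$ is regular (again by Lemma \ref{Jframe descompone el nucleo}), its $J$-selfadjoint complementary projection is $F$ (the $J$-selfadjoint projection onto $N(T)$), so $T^+T = I-F$, and applying both sides to $e_i$ gives $T^+f_i = (I-F)e_i$.

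For the converse $(2)\Rightarrow(1)$, I would argue as follows. The decomposition $N(T)=N(T)\cap\ell_2(I_+)\,\sdo\,N(T)\cap\ell_2(I_-)$ forces $N(T)$ to be regular, so the $J$-selfadjoint projection $F$ onto it is well-defined and commutes with $P_\pm$ (cf.\ the argument in Lemma \ref{Jframe descompone el nucleo}); hence $I-F$ is the $J$-selfadjoint projection onto $N(T)^{\ort}$. The hypothesis $T^+f_i = (I-F)e_i$ for all $i$ says precisely $T^+T = I-F$ on the standard basis, hence $T^+T = I-F$ as operators, i.e.\ $T^+T$ is the $J$-selfadjoint projection onto $N(T)^{\ort}$. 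By the equivalence $(1)\Leftrightarrow(3)$ of Corollary \ref{sintesis Parseval}, $\F$ is a Parseval $J$-frame. (One should also note that the identification of $\KK$ with $\ell_2(I)$ and of $E$ with $I-F$, explained in the paragraph preceding the corollary, makes this precisely the translation of Theorem \ref{caso P} into coefficient-space language, so either route — through Corollary \ref{sintesis Parseval} directly, or through Theorem \ref{caso P} via the isometric identification — works.)

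The main obstacle, such as it is, is bookkeeping rather than mathematical depth: one must be careful that $F$ denotes the $J$-selfadjoint projection onto $N(T)$ (not the orthogonal one), that regularity of $N(T)$ is what legitimizes this projection, and that $I-F$ is then genuinely the $J$-selfadjoint projection onto $N(T)^{\ort}$ — this last point needs the commutation $FP_\pm = P_\pm F$ already established in Lemma \ref{Jframe descompone el nucleo}, together with the fact that for a regular subspace the $J$-orthogonal complement's $J$-selfadjoint projection is the complementary idempotent. Once these identifications are in place, the proof is a one-line application of Corollary \ref{sintesis Parseval}.
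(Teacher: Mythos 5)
Your forward direction $(1)\Rightarrow(2)$ is correct and is exactly the paper's argument: Lemma \ref{Jframe descompone el nucleo} gives the nullspace decomposition, and Corollary \ref{sintesis Parseval} gives $T^+T=I-F$, hence $T^+f_i=T^+Te_i=(I-F)e_i$.

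The converse, however, has a genuine gap. You close it by invoking the equivalence $(1)\Leftrightarrow(3)$ of Corollary \ref{sintesis Parseval}, but that corollary is stated under the hypothesis that $\F$ is already a \emph{$J$-frame}, whereas in the present statement $\F$ is only assumed to be a frame in the Hilbert-space sense. Establishing that the two conditions in (2) force $\F$ to be a $J$-frame --- i.e.\ that $R(T_+)$ is maximal uniformly $J$-positive and $R(T_-)$ is maximal uniformly $J$-negative --- is precisely the content of the hard direction, and it is where the paper spends essentially all of its effort. Concretely, from $T^+T=I-F$ one first deduces that $TT^+$ is a $J$-selfadjoint idempotent with $R(TT^+)=R(T)=\HH$, hence $TT^+=I$; then, using that $F$ commutes with $P_+$ (a consequence of the assumed nullspace decomposition), one shows that $T_+T_+^+=TP_+T^+$ is itself a $J$-selfadjoint projection, that $T_-T_-^+=I-T_+T_+^+$, and that $R(T_\pm)=R(T_\pm T_\pm^+)$ is uniformly $J$-definite, which yields $\HH=R(T_+)\,\sdo\,R(T_-)$ and hence the $J$-frame property. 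None of this is automatic from $T^+T=I-F$ alone, and your appeal to Corollary \ref{sintesis Parseval} (or, in your parenthetical alternative, to Theorem \ref{caso P}, whose condition (2) would equally have to be verified, including the inclusion $\{f_i\}_{i\in I_\pm}\subseteq\KK_\pm$ under the identification of $\KK$ with $\ell_2(I)$) is circular at this point. You need to supply this intermediate argument.
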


\begin{proof}
If $\F$ is a Parseval $J$-frame with synthesis operator $T$, Corollary \ref{sintesis Parseval} implies that $T^+T=I-F$. Then, $T^+f_i=T^+Te_i=(I-F)e_i$ for every $i\in I$. On the other hand, the decomposition for $N(T)$ follows from Lemma \ref{Jframe descompone el nucleo}.

\medskip 

Conversely, assume that $N(T)= N(T)\cap \ell_2(I_+)\, \sdo\, N(T)\cap\ell_2(I_-)$. Then, $N(T)$ is a regular subspace of $\ell_2(I)$ and its associated $J$-selfadjoint projection $F$ satisfies $T^+ f_i= (I-F) e_i$, $i\in I$. Hence, $T^+T=I-F$ and $TT^+$ is also a $J$-selfadjoint projection. Since $R(TT^+)=R(T)=\HH$, it follows that $TT^+=I$. 

It remains to show that $\F$ is a $J$-frame. Note that $F$ commutes with $P_+$ because $N(T)= N(T)\cap \ell_2(I_+)\, \sdo\, N(T)\cap\ell_2(I_-)$. Then, $P_+(I-F)=(I-F)P_+$ is the $J$-selfadjoint projection onto $R(T^+)\cap \ell_2(I_+)$ and $T_+T_+^+$ is a $J$-selfadjoint projection:
\begin{eqnarray*}
(T_+T_+^+)^2 &=& T_+(T_+^+T_+)T_+^+=T_+(T^+T)T_+^+ = TP_+(I-F)P_+T^+\\ &=& T(I-F)P_+T^+ = TP_+T^+=T_+T_+^+.
\end{eqnarray*}
Furthermore, $I=TT^+=T_+T_+^+ + T_-T_-^+$ implies that $T_-T_-^+=I - T_+T_+^+$.

Finally, $T_+= TP_+= T(I-F)P_+= TP_+(I-F)=T_+ (I-F)$ gives that $R(T_+)= R(T_+(I-F))= T_+(R((I-F))=T_+ R(T^+)=R(TP_+ T^+)= R(T_+T_+^+)$. Thus, $R(T_+)= R(T_+T_+^+)$ is uniformly $J$-positive. Analogously, $R(T_-)= R(T_-T_-^+)$ is uniformly $J$-negative and 
\[
\HH=R(T_+)\,\sdo\, R(T_-).
\]
So, $\F$ is a Parseval $J$-frame.
\end{proof}

\noi In the Hilbert spaces setting, the existence of a Parseval dual frame was characterized in \cite[Thm. 2.2]{Han}:

\begin{thm*}
Let $\F=\{f_i\}_{i\in I}$ be a frame for $\HH$. Then, $\F$ admits a Parseval dual frame if and only if there exists a Hilbert $\KK\supset \HH$, an orthonormal basis $\{e_i\}_{i\in I}$ for $\KK$ and an oblique projection $Q\in L(\KK)$ onto $\HH$ such that
\[
Qe_i=f_i, \ \ \ i\in I.
\]
\end{thm*}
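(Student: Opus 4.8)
The plan is to derive both implications from the standard dilation dictionary for frames together with Naimark's theorem; the one extra observation needed is that a \emph{Parseval} dual frame has an \emph{isometric} analysis operator, which is precisely what lets $\HH$ be fitted inside a larger Hilbert space carrying an orthonormal basis that is sent (obliquely) onto $\F$.

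First I would handle the implication ``$\F$ admits a Parseval dual $\Rightarrow$ dilation picture''. Let $T\in L(\ell_2(I),\HH)$ be the synthesis operator of $\F$ and let $\G=\{g_i\}_{i\in I}$ be a Parseval dual frame for $\F$, with synthesis operator $V\in L(\ell_2(I),\HH)$. The two duality identities translate into $TV^{*}=VT^{*}=I_\HH$ and the Parseval condition into $VV^{*}=I_\HH$; in particular $V^{*}\colon\HH\to\ell_2(I)$ is an isometry, so $R(V^{*})$ is a closed subspace of $\ell_2(I)$ unitarily equivalent to $\HH$. I would then take $\KK=\ell_2(I)$ with its standard orthonormal basis $\{e_i\}_{i\in I}$, identify $\HH$ with $R(V^{*})$, and set $Q:=V^{*}T$. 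Then $Q^{2}=V^{*}(TV^{*})T=V^{*}T=Q$ and $R(Q)=R(V^{*}T)=R(V^{*})$ because $T$ is surjective, so $Q$ is a (generally oblique) projection of $\KK$ onto $\HH$, and $Qe_i=V^{*}Te_i=V^{*}f_i$, which equals $f_i$ under the identification. To obtain a genuine inclusion $\HH\subseteq\KK$ I would instead take $\KK:=\HH\oplus N(V)$ and transport everything through the unitary $x\mapsto(Vx,P_{N(V)}x)$, which sends $\{e_i\}_{i\in I}$ to the orthonormal basis $\{(g_i,P_{N(V)}e_i)\}_{i\in I}$ and $Q$ to an idempotent $\tilde Q$ on $\KK$ with $\tilde Q(g_i,P_{N(V)}e_i)=(f_i,0)\in\HH\oplus\{0\}$.

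For the converse I would start from $\KK\supseteq\HH$ (a closed subspace), an orthonormal basis $\{e_i\}_{i\in I}$ of $\KK$ and an oblique projection $Q\in L(\KK)$ with $R(Q)=\HH$ and $Qe_i=f_i$; let $P\in L(\KK)$ be the orthogonal projection of $\KK$ onto $\HH$, and put $g_i:=Pe_i$. By Naimark's theorem $\G=\{g_i\}_{i\in I}$ is a Parseval frame for $\HH$, so it only remains to check that $\G$ is dual to $\F$. Fix $f\in\HH$, so that $Pf=f$ and $Qf=f$; since $P=P^{*}$ one has $\PI{f}{g_i}=\PI{Pf}{e_i}=\PI{f}{e_i}$, hence
\[
\sum_{i\in I}\PI{f}{g_i}f_i=\sum_{i\in I}\PI{f}{e_i}Qe_i=Q\Big(\sum_{i\in I}\PI{f}{e_i}e_i\Big)=Qf=f .
\]
Next, $PQ^{*}f=f$, because $PQ^{*}f\in\HH$ and $\PI{PQ^{*}f}{h}=\PI{Q^{*}f}{Ph}=\PI{Q^{*}f}{h}=\PI{f}{Qh}=\PI{f}{h}$ for every $h\in\HH$; and since $\PI{f}{f_i}=\PI{f}{Qe_i}=\PI{Q^{*}f}{e_i}$,
\[
\sum_{i\in I}\PI{f}{f_i}g_i=\sum_{i\in I}\PI{Q^{*}f}{e_i}Pe_i=P\Big(\sum_{i\in I}\PI{Q^{*}f}{e_i}e_i\Big)=PQ^{*}f=f .
\]
Both series converge in norm since $\F$ and $\G$ are Bessel families and the scalar sequences $(\PI{f}{f_i})_{i\in I}$, $(\PI{f}{g_i})_{i\in I}$ belong to $\ell_2(I)$; therefore $\G$ is a Parseval dual frame for $\F$.

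The hard part will be the first implication, and within it the bookkeeping required to upgrade the isometric identification $\HH\cong R(V^{*})$ to an honest set-theoretic inclusion $\HH\subseteq\KK$ while keeping $\{e_i\}_{i\in I}$ orthonormal and $Q$ an idempotent with the right range; once Naimark's theorem is in hand, the converse reduces to the straightforward computation above.
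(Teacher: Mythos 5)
Your proof is correct. Note that the paper does not actually prove this statement: it is quoted verbatim from Han (cited as Theorem 2.2 of that reference) and used as a black box in the proof of the subsequent theorem on dual Parseval $J$-frames, so there is no in-paper argument to compare against. Your argument is the standard dilation one and is complete: in the forward direction the key identities $TV^{*}=VT^{*}=I_{\HH}$ and $VV^{*}=I_{\HH}$ do make $V^{*}$ an isometry, $Q=V^{*}T$ is a bounded idempotent with range $R(V^{*})$, and your transport through the unitary $x\mapsto (Vx,P_{N(V)}x)$ correctly upgrades the identification $\HH\cong R(V^{*})$ to a genuine inclusion $\HH\subseteq \HH\oplus N(V)$ while carrying the standard basis to an orthonormal basis and $Q$ to an idempotent sending that basis onto $\F$ — this is exactly the bookkeeping that is usually glossed over, and you handle it correctly. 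The converse computation ($Qf=f$ and $PQ^{*}f=f$ for $f\in\HH$, giving both reconstruction identities for $\G=\{Pe_i\}_{i\in I}$) is also sound.
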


Given a $J$-frame $\F=\{f_i\}_{i\in I}$ for $\HH$, assume that $\G=\{g_i\}_{i\in I}$ is a dual $J$-frame for $\F$. 
Theorem \ref{J-frames duales} shows that the subspaces spanned by the $J$-positive (resp. $J$-negative) vectors of these $J$-frames are strongly related:
\[
\N_\pm=\M_\mp^{\ort}.
\]
If it is also assumed that $\G$ is a Parseval $J$-frame, then in particular $\N_-=\N_+^{\ort}$ (cf. Proposition \ref{Parsevalitos}). Thus, this extra assumption imposes that $\F$ generates $J$-orthogonal subspaces:
\[
\M_+^{\ort}=\N_-=\N_+^{\ort}=(\M_-^{\ort})^{\ort}=\M_-.
\]
Then, a necessary condition for the existence of a dual Parseval $J$-frame for $\F$ is that $\M_-=\M_+^{\ort}$.

\begin{thm}
Let $\F=\{f_i\}_{i\in I}$ be a $J$-frame for $\HH$ such that $\K{f_i}{f_j}=0$ for $i\in I_+$, $j\in I_-$. Then, the following statements are equivalent:
\begin{enumerate}
\item $\F$ has a dual family which is a Parseval $J$-frame;
\item there exist a Krein space $\KK=\KK_+ \sdo \KK_-$, containing $\HH$ as a regular subspace, a $\K{\,}{\,}_\KK$-orthonormal basis $\{b_i\}_{i\in I}$ of $\KK$ and an oblique projection $Q\in L(\KK)$ onto $\HH$ such that $\{f_i\}_{i\in I_\pm} \subseteq \KK_\pm$ and
\[
f_i =Q b_i, \ \ \ \text{for every $i\in I$}.
\]
\end{enumerate}
\end{thm}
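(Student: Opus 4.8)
The plan is to reduce the statement to the Naimark-type Theorem~\ref{caso P} for Parseval $J$-frames, together with elementary manipulations of synthesis operators, mimicking the Hilbert-space argument of \cite{Han}. First I would record that the hypothesis $\K{f_i}{f_j}=0$ for $i\in I_+$, $j\in I_-$ says precisely that $\M_+\ort\M_-$; since $\HH=\M_+\dotplus\M_-$ by \eqref{suma}, it follows that $\M_-=\M_+^{\ort}$ and that $\HH=\M_+\sdo\M_-$ is a fundamental decomposition of $\HH$ --- which is exactly what makes Proposition~\ref{Parsevalitos} and Theorem~\ref{caso P} available.

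For $(1)\Rightarrow(2)$, suppose $\G=\{g_i\}_{i\in I}$ is a dual family for $\F$ which is a Parseval $J$-frame, and let $T$, $V$ be the synthesis operators of $\F$, $\G$. From \eqref{dual} one reads off $TV^+=VT^+=I$, and since $\G$ is a Parseval $J$-frame, $VV^+=I$ (Corollary~\ref{sintesis Parseval}). I would apply Theorem~\ref{caso P} to $\G$: this yields a Krein space $\KK=\KK_+\sdo\KK_-$ with $\HH$ a regular subspace, a $\K{\,}{\,}_\KK$-orthonormal basis $\{b_i\}_{i\in I}$ with $\{g_i\}_{i\in I_\pm}\subseteq\KK_\pm$ and $g_i=Eb_i$, where $E$ is the $J$-selfadjoint projection onto $\HH$; moreover, as in the remark following Theorem~\ref{caso P}, $\KK$ is identified with $\ell_2(I)$ via $b_i\leftrightarrow e_i$, and under this identification $\HH$ corresponds to $N(V)^{\ort}=R(V^+)$ through the Krein-space isomorphism $V^+|_\HH$ (a $J$-isometry because $VV^+=I$). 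By Theorem~\ref{J-frames duales}, $\ol{\Span\{g_i:\ i\in I_\pm\}}=\M_\mp^{\ort}$, hence (using $\M_-=\M_+^{\ort}$) $\M_\pm=\N_\pm\subseteq\KK_\pm$ and in particular $\{f_i\}_{i\in I_\pm}\subseteq\KK_\pm$. Then I would set $Q:=V^+T$; using $TV^+=I$ and surjectivity of $T$, $Q^2=Q$ and $R(Q)=R(V^+)$, so $Q$ is an oblique projection onto $\HH$, and $Qb_i=V^+f_i$, which is precisely the image of $f_i$ under the identification, i.e. $Qb_i=f_i$.

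For $(2)\Rightarrow(1)$, given the dilation data I would let $E\in L(\KK)$ be the $J$-selfadjoint projection onto $\HH$ and put $g_i:=Eb_i$. Since $\{f_i\}_{i\in I_\pm}\subseteq\KK_\pm$ span $\M_\pm$, one gets $\M_\pm\subseteq\KK_\pm$, so $E$ decomposes as the orthogonal sum of the orthogonal projections onto $\M_+$ in $\KK_+$ and onto $\M_-$ in $\KK_-$ (compatibility of $E$ with the fundamental decomposition of $\KK$, cf.\ \cite[Prop.~4]{Hassi}); hence $\ol{\Span\{g_i:\ i\in I_\pm\}}=\M_\pm$, and the computation in the converse part of Theorem~\ref{caso P} shows that $\{g_i\}_{i\in I_\pm}$ is a Parseval frame for $(\M_\pm,\pm\K{\,}{\,})$. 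By Proposition~\ref{Parsevalitos}, $\G$ is then a Parseval $J$-frame for $\HH$. To see that $\G$ is a dual family for $\F$, I would compute, for $f\in\HH$, using $Ef=f$, $Qf=f$ and the expansions of $f$ and $Q^+f$ in the basis $\{b_i\}$,
\[
\sum_{i\in I}\s_i\K{f}{g_i}f_i=Q\Big(\sum_{i\in I}\s_i\K{f}{b_i}_\KK b_i\Big)=Qf=f,
\]
\[
\sum_{i\in I}\s_i\K{f}{f_i}g_i=E\Big(\sum_{i\in I}\s_i\K{Q^+f}{b_i}_\KK b_i\Big)=EQ^+f=f,
\]
the last equality holding because $EQ^+f-f$ lies in both $\HH$ and $\HH^{\ort}$.

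The step that will need genuine care is the last one in $(2)\Rightarrow(1)$: checking that $\G=\{Eb_i\}$ is really a \emph{dual family}, i.e. that the sign conditions $\sgn\K{g_i}{g_i}=\s_i$ hold. This is where the orthogonality hypothesis is essential --- it is what forces $\HH=\M_+\sdo\M_-$ to be a fundamental decomposition (so that Proposition~\ref{Parsevalitos} applies and $E$ splits compatibly) and places each $g_i$ inside $\M_+$ (if $i\in I_+$) or $\M_-$ (if $i\in I_-$), which are uniformly $J$-definite. The rest is bookkeeping: tracking the identification $\ell_2(I)\cong\KK$ and which of the identities $TV^+=I$, $VT^+=I$, $VV^+=I$ is used at each point.
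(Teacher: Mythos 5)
Your implication $(1)\Rightarrow(2)$ is correct and takes a genuinely different route from the paper's. You dilate the Parseval dual $\G$ via Theorem \ref{caso P}, identify $\KK$ with $\ell_2(I)$, and exhibit the oblique projection explicitly as $Q=V^+T$ (idempotent because $TV^+=I$, with $Qb_i=V^+f_i$, the image of $f_i$ under the embedding). The paper never touches synthesis operators here: it observes that, under the orthogonality hypothesis, $\F$ has a Parseval $J$-frame dual if and only if each $\F_\pm$ admits a Parseval dual in the Hilbert space $(\M_\pm,\pm\K{\,}{\,})$, invokes Han's Hilbert-space theorem for $\F_+$ and $\F_-$ separately, and glues the two dilations $\KK_+$, $\KK_-$ into one Krein space. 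Your version is essentially a reproof of Han's theorem in the Krein setting, self-contained modulo Theorem \ref{caso P}; the paper's is shorter but leans entirely on the cited Hilbert-space result.

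The converse is where your argument has a genuine gap, and it sits exactly at the spot you flagged but then waved away. Every step of your $(2)\Rightarrow(1)$ --- that $g_i=Eb_i$ lands in $\M_+$ for $i\in I_+$ and in $\M_-$ for $i\in I_-$, that $\sum_{i\in I}\s_i\K{x}{b_i}_\KK b_i=x$, that the expansion of $f\in\M_+$ involves only indices in $I_+$ --- requires $b_i\in\KK_+$ for $i\in I_+$ and $b_i\in\KK_-$ for $i\in I_-$, i.e.\ that the $J$-orthonormal basis is compatible with the given fundamental decomposition \emph{and} that its signature is labelled by the partition $I=I_+\cup I_-$. Condition (2) as stated does not give you this: it constrains the images $f_i=Qb_i$, not the vectors $b_i$, and since $Q$ is oblique you cannot pull the condition back; moreover $\sgn\K{b_i}{b_i}_\KK$ need not equal $\s_i$. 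The hypothesis $\K{f_i}{f_j}=0$ cannot repair this, since it only concerns $\F$. Without the alignment, $Eb_i=E_+b_i+E_-b_i$ may have components in both $\M_+$ and $\M_-$, the sign condition $\sgn\K{g_i}{g_i}=\s_i$ can fail, and your two reconstruction identities break at the step where you resum the basis expansion with the weights $\s_i$ in place of $\K{b_i}{b_i}_\KK$. To close the gap you must either read (2) as including $\{b_i\}_{i\in I_\pm}\subseteq\KK_\pm$ --- which is what both your and the paper's forward constructions actually produce, and is surely the intended meaning, the paper's own converse being equally silent on this point --- or show that the dilation can be modified to achieve this alignment. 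With that reading, $E$ splits as $E_++E_-$ with $E_\pm$ the orthogonal projection of $\KK_\pm$ onto $\M_\pm$, each $g_i$ lies in the uniformly $J$-definite subspace $\M_{\pm}$, and the rest of your computation goes through.
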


\begin{proof}
Assume that $\F$ is a $J$-frame such that $\K{f_i}{f_j}=0$ for $i\in I_+$, $i\in I_-$. Then, $\F_\pm=\{f_i\}_{i\in I_\pm}$ is a frame for the Hilbert space $(\M_\pm, \pm\K{\,}{\,})$, where $\M_\pm$ are given by \eqref{emes}. 
Also, the assumption on $\F$ guarantees the $J$-orthogonality between $\M_+$ and $\M_-$.

Then, observe that $\F$ has a dual family which is a Parseval $J$-frame if and only if $\F_\pm$ admits a Parseval dual frame in the Hilbert space $(\M_\pm, \pm\K{\,}{\,})$.

Moreover, by \cite[Thm. 2.2]{Han}, $\F_\pm$ admits a Parseval dual frame if and only if there exists a Hilbert $\KK_\pm\supset \M_\pm$, an orthonormal basis $\{e_i\}_{i\in I_\pm}$ for $\KK_\pm$ and an oblique projection $Q_\pm\in L(\KK_\pm)$ onto $\HH$ such that
\[
Q_\pm e_i=f_i, \ \ \ i\in I_\pm.
\]

Hence, considering the Krein space $\KK$ with fundamental decomposition $\KK=\KK_+\sdo\KK_-$, the family $\{e_i\}_{i\in I}=\{e_i\}_{i\in I_+}\cup \{e_i\}_{i\in I_-}$ is a $J$-orthonormal basis for $\KK$ and the oblique projection $Q\in L(\KK)$ defined by
\[
Qx=Q_+ x_+  + Q_- x_-, \ \ \ \text{if $x=x_+ + x_-$},
\]
where $x_\pm\in \KK_\pm$, satisfies $Qe_i=f_i$ for every $i\in I$ and $R(Q)=R(Q_+)+R(Q_-)=\M_+ + \M_-=\HH$.
\end{proof}

{\small

}

\subsection*{Contact information}

{\bf Juan Ignacio Giribet}

Facultad de Ingenier\'ia, Universidad de Buenos Aires

Av. Paseo Col\'on 850 (C1063ACV) Buenos Aires, Argentina 

and Instituto Argentino de Matem\'atica "Alberto P. Calder\'on" (CONICET) 

Saavedra 15 (C1083ACA) Buenos Aires, Argentina 

jgiribet@fi.uba.ar

\noindent
{\bf Alejandra Maestripieri}

Facultad de Ingenier\'ia, Universidad de Buenos Aires

Av. Paseo Col\'on 850 (C1063ACV) Buenos Aires, Argentina 

and Instituto Argentino de Matem\'atica "Alberto P. Calder\'on" (CONICET) 

Saavedra 15 (C1083ACA) Buenos Aires, Argentina 

amaestri@fi.uba.ar

\noindent
{\bf Francisco Mart\'{\i}nez Per\'{\i}a }

Depto. de Matem\'{a}tica, Fac. Cs. Exactas, Universidad Nacional de La Plata

C.C.\ 172, (1900) La Plata, Argentina

and Instituto Argentino de Matem\'{a}tica "Alberto P. Calder\'{o}n" (CONICET)

Saavedra 15 (C1083ACA) Buenos Aires, Argentina

francisco@mate.unlp.edu.ar

\end{document}